\newtheorem{theorem}{Theorem}
\newtheorem{lemma}[theorem]{Lemma}
\newtheorem{assumption}[theorem]{Assumption}
\newtheorem{remark}[theorem]{Remark}
\theoremstyle{definition}
\newtheorem{definition}[theorem]{Definition}
\newtheorem{proposition}[theorem]{Proposition}
\newcommand{\dual}[1]{\langle #1 \rangle }
\begin{document}                                                 
\title[Poisson Equation on Surface with Boundary]{On the Poisson Equation on a Surface with a boundary condition in co-normal direction}                                 
\author[Hajime Koba]{Hajime Koba}                                
\address{Graduate School of Engineering Science, Osaka University,\\
1-3 Machikaneyamacho, Toyonaka, Osaka, 560-8531, Japan}                                  
\email{iti@sigmath.es.osaka-u.ac.jp}                                      
\author[Yuki Wakasugi]{Yuki Wakasugi}                                
\address{Graduate School of Engineering Science, Osaka University,\\
1-3 Machikaneyamacho, Toyonaka, Osaka, 560-8531, Japan}

\keywords{Poisson equation, Boundary condition in co-normal direction, Weak solutions, Strong solutions, Strong $L^p$-solutions}                            
\subjclass[]{35J05, 35D35, 35D30}                                
\begin{abstract}
This paper considers the existence of weak and strong solutions to the Poisson equation on a surface with a boundary condition in co-normal direction. We apply the Lax-Milgram theorem and some properties of $H^1$-functions to show the existence of a unique weak solution to the surface Poisson equation when the exterior force belongs to $L_0^p$-space, where $H^1$- and $L_0^p$- functions are the ones whose value of the integral over the surface equal to zero. Moreover, we prove that the weak solution is a strong $L^p$-solution to the system. As an application, we study the solvability of ${\rm{div}_\Gamma } V = F$. The key idea of constructing a strong $L^p$-solution to the surface Poisson equation with a boundary condition in co-normal direction is to make use of solutions to the surface Poisson equation with a Dirichlet boundary condition.
\end{abstract}       
\maketitle

\section{Introduction}\label{sect1}

\begin{figure}[htbp]
\begin{center}
\input{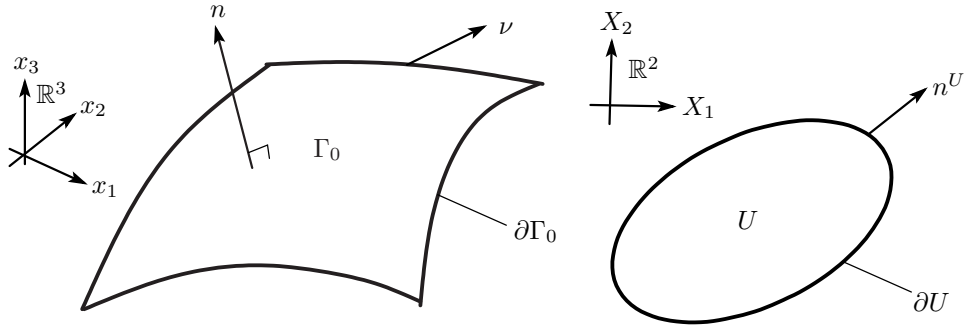}
\caption{Surface with a boundary}
\label{Fig1}
\end{center}
\end{figure}

We are interested in the existence of solutions to the Poisson equation on a surface $\Gamma_0$ with a boundary $\partial \Gamma_0$ (see Figure \ref{Fig1}). The surface Poisson equation is a simple model for steady state of concentration and thermal diffusions on the surface, for example, the spread of a substance in a soap bubble and the transmission of heat on earth's surface (see \cite{K19a} and \cite{K19b} for mathematical modeling of PDEs on surfaces with boundaries). This paper constructs weak and strong solutions to the surface Poisson equation with a boundary condition in co-normal direction. This paper has three purposes. The first one is to show the existence of a weak solution to the surface Poisson equation. The second one is to show the existence of a strong $L^p$-solution to the surface Poisson equation with a boundary condition in co-normal direction. The third one is to solve the system: ${\rm{div}}_\Gamma V = F$.

Let us first introduce basic notations. Let $x = { }^t (x_1 , x_2 , x_3 ) \in \mathbb{R}^3$, $y = { }^t (y_1 , y_2 , y_3 )$ $ \in \mathbb{R}^3$, $X = { }^t (X_1 , X_2) \in \mathbb{R}^2$, and $Y = { }^t (Y_1 ,Y_2) \in\mathbb{R}^2$ be the spatial variables. The symbols $\nabla$, $\nabla_X$, and $\nabla_Y$ are the three gradient operators defined by $\nabla = { }^t (\partial_1 , \partial_2 ,\partial_3)$, $\nabla_X = { }^t (\partial_{X_1}, \partial_{X_2})$, and $\nabla_Y = { }^t (\partial_{Y_1} , \partial_{Y_2})$, where $\partial_j = \partial / \partial x_j $, $\partial_{X_\alpha} = \partial/\partial X_\alpha$, and $\partial_{Y_\alpha} = \partial/\partial Y_\alpha$. Let $\Gamma_0$ be a surface with a boundary $\partial \Gamma_0$ such that $\Gamma_0 = \{ x \in \mathbb{R}^3;{ \ }x = \widehat{x} (X), X \in U \}$, where $U \subset \mathbb{R}^2$ and $\widehat{x} = \widehat{x} (X) = { }^t (\widehat{x}_1, \widehat{x}_2, \widehat{x}_3) \in [C^2 (\overline{U})]^3$. The symbol $n = n (x) = { }^t (n_1 , n_2 , n_3)$ denotes the unit outer normal vector at $x \in \overline{\Gamma_0}$, the symbol $\nu = \nu ( x) = { }^t ( \nu_1 , \nu_2 , \nu_3)$ denotes the unit outer co-normal vector at $x \in \partial \Gamma_0$, and the symbol $n^U = n^U (X) = { }^t (n_1^U , n_2^U)$ denotes the unit outer normal vector at $X \in \partial U$ (see Figure \ref{Fig1}).

This paper considers the \emph{surface Poisson equation} on the surface $\Gamma_0$:
\begin{equation}\label{eq11}
\begin{cases}
- \Delta_\Gamma v = F \text{ on } \Gamma_0,\\
\displaystyle{ \frac{\partial v}{\partial \nu} = 0 } \text{ on } \partial \Gamma_0,
\end{cases}
\end{equation}
and the solvability of ${\rm{div}_\Gamma } V = F$:
\begin{equation}\label{eq12}
\begin{cases}
{\rm{div}}_\Gamma V = F \text{ on } \Gamma_0,\\
V \cdot n = \chi \text{ on } \Gamma_0,\\
V \cdot \nu = 0 \text{ on }\partial \Gamma_0,
\end{cases}
\end{equation}
where $v = v (x)$, $V = V (x) = { }^t ( V_1 , V_2 , V_3)$ are unknown functions and the exterior forces $F = F (x)$, $\chi = \chi (x)$ are given functions. Here $\Delta_\Gamma v := (\partial_1^\Gamma)^2 v + ( \partial_2^\Gamma )^2 v + (\partial_3^\Gamma )^2 v$, ${\partial v }/{\partial \nu} := ( \nu \cdot \nabla_\Gamma ) v $, and ${\rm{div}}_\Gamma V := \nabla_\Gamma \cdot V$, where $\nabla_\Gamma := { }^t (\partial_1^\Gamma , \partial_2^\Gamma , \partial_3^\Gamma )$ and $\partial_j^\Gamma := \partial_j - n_j (n \cdot \nabla )$. See Sections \ref{sect2} and \ref{sect3} for the differential operators $\Delta_\Gamma$, $\nabla_\Gamma$, $\partial_j^\Gamma$, and the unit outer co-normal vector $\nu$. System \eqref{eq12} plays an important role in studying the surface pressure of the fluid on surfaces (see \cite{KLG17}). This paper applies a solution to system \eqref{eq11} to construct a solution to \eqref{eq12}.

Let us state mathematical analysis of the surface Poisson equation: $- \Delta_\Gamma v = F$ on $\Gamma_0$. Aubin \cite[Chapter 4]{Aub82} studied system \eqref{eq11} when $\Gamma_0$ is a compact $C^\infty$ Riemannian manifold. They showed the existence of a smooth solution to system \eqref{eq11} if $F$ is a smooth function. They also showed the existence of the Green function of the Laplacian $\Delta_\Gamma$ when $\Gamma_0$ is the oriented compact Riemannian manifold with boundary of class $C^\infty$. Ni-Shi-Tam \cite{NST01} studied the sufficient and necessary conditions for the existence of solutions to system \eqref{eq11} when $\Gamma_0$ is a complete noncompact manifolds with nonnegative Ricci curvature and $f$ is nonnegative locally H$\ddot{\rm{o}}$lder continuous function. Ni \cite[\S 2]{Ni02} studied the solutions to system \eqref{eq11} when $\Gamma_0$ is a complete Riemannian manifold. They showed the existence of a nonnegative solution to their system if $F$ is a nonnegative continuous function. See also Taylor \cite[Chapter 5, \S 9]{Tay11a} for the regularity of solutions to the natural boundary problems for the Hodge Laplacian. When $\Gamma_0$ is a complete noncompact manifolds with nonnegative Ricci curvature, Wong-Zhang \cite{WZ03}, Munteanu-Sesum \cite{MS10}, and Catino-Monticelli-Punzo \cite{CMP19} studied the existence and decay properties of solutions to their surface Poisson equations. Finally, we introduce the results related to this paper. Li-Shi-Sun \cite{LSS17} considered their approximate solutions to the Neumann problem of the surface Poisson equation under the assumptions that their Poisson equation admits a smooth solution. In this paper, we study the existence of both weak and strong solutions to the surface Poisson equation with a boundary condition in co-normal direction.

To study system \eqref{eq11}, we set $\widehat{v} = \widehat{v} (X) = v (\widehat{x} (X)) $ and $\widehat{F} = \widehat{F} (X) = \widehat{F} (\widehat{x} (X))$. Then we have
\begin{equation}\label{eq13}
\begin{cases}
- {\displaystyle{\sum_{\alpha, \beta =1}^2 \frac{1}{\sqrt{ \mathcal{G} }} \frac{\partial}{\partial X_\alpha} \left( \sqrt{ \mathcal{G} } \mathfrak{g}^{\alpha \beta} \frac{\partial \widehat{v}}{\partial X_\beta} \right) = \widehat{F}}} \text{ in } U,\\
{\displaystyle{ \frac{ n_1^U \mathfrak{g}_2 - n_2^U \mathfrak{g}_1}{| n_1^U \mathfrak{g}_2 - n_2^U \mathfrak{g}_1| } \times \frac{ \mathfrak{g}_1 \times \mathfrak{g}_2}{| \mathfrak{g}_1 \times \mathfrak{g}_2| }}} \cdot {\displaystyle{\sum_{\alpha , \beta =1}^2 \mathfrak{g}^{\alpha \beta} \mathfrak{g}_\alpha \frac{\partial \widehat{v}}{\partial X_\beta} }} = 0 \text{ on } \partial U,
\end{cases}
\end{equation}
where $\mathfrak{g}_\alpha := \partial \widehat{x}/{\partial X_\alpha}$, $\mathcal{G}:= \mathfrak{g}_{11} \mathfrak{g}_{22} - \mathfrak{g}_{12} \mathfrak{g}_{21}$, $\mathfrak{g}_{\alpha \beta} := \mathfrak{g}_\alpha \cdot \mathfrak{g}_\beta$, and $( \mathfrak{g}^{\alpha \beta})_{2 \times 2} := (\mathfrak{g}_{\alpha \beta})_{2 \times 2}^{-1}$. See Section \ref{sect2} for details.

To show the existence of a strong $L^p$-solution to system \eqref{eq11}, we have to find a function $\widehat{v} \in W^{2,p} (U)$ satisfying
\begin{equation}\label{eq14}
{\displaystyle{ \frac{ n_1^U \mathfrak{g}_2 - n_2^U \mathfrak{g}_1}{| n_1^U \mathfrak{g}_2 - n_2^U \mathfrak{g}_1| } \times \frac{ \mathfrak{g}_1 \times \mathfrak{g}_2}{| \mathfrak{g}_1 \times \mathfrak{g}_2| }}} \cdot {\displaystyle{\sum_{\alpha , \beta =1}^2 \mathfrak{g}^{\alpha \beta} \mathfrak{g}_\alpha \frac{\partial \widehat{v}}{\partial X_\beta} }} = 0 \text{ on } \partial U.
\end{equation}
However, it is not easy to deal with \eqref{eq14} directly. To overcome the difficulty, we first show the existence of a weak solution to system \eqref{eq11}, and then we prove that the weak solution is a strong $L^p$-solution to the surface Poisson equation with the boundary condition $\partial v/{\partial \nu}|_{\partial \Gamma_0} = 0$.

Let us explain the two key ideas of constructing strong solutions to system \eqref{eq11}. The first one is to apply some properties of the function spaces $L_0^p (\Gamma_0)$ and $H^1 (\Gamma_0)$ defined by
\begin{align*}
L_0^p (\Gamma_0) & = \left\{ f \in L^p (\Gamma_0 ); { \ } \int_{\Gamma_0} f (y) { \ }d \mathcal{H}_y^2 = 0 \right\},\\
H^1 (\Gamma_0) & = \left\{ f \in W^{1,2} (\Gamma_0 ); { \ } \int_{\Gamma_0} f (y) { \ }d \mathcal{H}_y^2 = 0 \right\},
\end{align*}
where $d \mathcal{H}^2_y$ denotes the 2-dimensional Hausdorff measure (see Section \ref{sect3} for our function spaces on the surface $\Gamma_0$). We apply both Lax-Milgram theorem and the surface Poincar\'e inequality for $H^1 (\Gamma_0)$-functions to show the existence of a unique weak solution to system \eqref{eq11} when $F \in L_0^2 (\Gamma_0)$.

The second one is to consider the following surface Poisson equation with the Dirichlet boundary condition:
\begin{equation}\label{eq15}
\begin{cases}
- \Delta_\Gamma v_* & = F \text{ on } \Gamma_0,\\
v_* \big|_{\partial \Gamma_0} & = v|_{\partial \Gamma_0},
\end{cases}
\end{equation}
where $v$ is a weak solution to system \eqref{eq11} when $F \in L_0^2 (\Gamma_0)$. By constructing a strong solution to system \eqref{eq15}, we show the existence of a strong solution to \eqref{eq11}. See Sections \ref{sect4}-\ref{sect6} for details.

The outline of this paper is as follows: In Section \ref{sect2}, we state the assumptions of our surface $\Gamma_0$ and the main results of this paper. In Section \ref{sect3}, we study function spaces on the surface $\Gamma_0$. In subsection \ref{subsec31}, we recall the representation of differential operators on the surface $\Gamma_0$ and the surface divergence theorem. In subsection \ref{subsec32}, we introduce and study the function spaces $L^p (\Gamma_0)$, $W^{1,p}(\Gamma_0 )$, $W^{2,p} (\Gamma_0)$, and the trace operator $\gamma_p: W^{1,p} ( \Gamma_0 ) \to L^p (\partial \Gamma_0)$. In subsection \ref{subsec33}, we investigate the function spaces $L_0^p (\Gamma_0)$ and $H^1 (\Gamma_0)$, and derive the surface Poincar\'{e} inequalities. In Section \ref{sect4}, we apply the Lax-Milgram theorem to show the existence of a unique weak solution to system \eqref{eq11} when $F \in L_0^2 ( \Gamma_0 )$. In Section \ref{sect5} we study the regularity for weak solutions to the surface Poisson equation with a Dirichlet boundary condition. In Section \ref{sect6}, we show the existence of a unique strong $L^p$-solution to system \eqref{eq11} when $F \in L_0^p ( \Gamma_0 )$ by making use of the regularity for a solution to system \eqref{eq15}. Moreover, we apply a solution to \eqref{eq11} to construct a solution to system \eqref{eq12}.

\section{Main Results}\label{sect2}
We first introduce the definition of our surface with a boundary and notations. Then we state the main results of this paper.
\begin{definition}[Surface with boundary]\label{def21}
Let $\Gamma_0 \subset \mathbb{R}^3$ be a set. We call $\Gamma_0$ a \emph{surface with a boundary} if the following properties hold:\\
$(\mathrm{i})$ The set $\Gamma_0$ can be written as
\begin{equation*}
\Gamma_0 = \{ x = { }^t (x_1, x_2 ,x_3) \in \mathbb{R}^3; { \ }x = \widehat{x} (X), X \in U \},
\end{equation*}
where $U \subset \mathbb{R}^2$ is a bounded domain with a $C^2$-boundary $\partial U$ and $\widehat{x} = { }^t ( \widehat{x}_1 , \widehat{x}_2, \widehat{x}_3 ) \in [C^2 ( \overline{U})]^3$.\\
$(\mathrm{ii})$ The map
\begin{equation*}
\widehat{x} (\cdot ) : \overline{U} \to \overline{\Gamma_0} \text{ is bijective}.
\end{equation*}
$(\mathrm{iii})$ The boundary $\partial \Gamma_0$ of $\Gamma_0$ can be written as
\begin{equation*}
\partial \Gamma_0 = \{ x = { }^t (x_1, x_2 ,x_3) \in \mathbb{R}^3; { \ }x = \widehat{x} (X), X \in \partial U \}.
\end{equation*}
$(\mathrm{iv})$ There is $\lambda_{min} >0$ such that for all $X \in \overline{U}$
\begin{multline*}
\left( \frac{\partial \widehat{x}_2}{\partial X_1} \frac{\partial \widehat{x}_3}{\partial X_2} - \frac{\partial \widehat{x}_2}{\partial X_2} \frac{\partial \widehat{x}_3}{\partial X_1} \right)^2 + \left( \frac{\partial \widehat{x}_1}{\partial X_1} \frac{\partial \widehat{x}_3}{\partial X_2} - \frac{\partial \widehat{x}_1}{\partial X_2} \frac{\partial \widehat{x}_3}{\partial X_1} \right)^2\\
 + \left( \frac{\partial \widehat{x}_1}{\partial X_1} \frac{\partial \widehat{x}_2}{\partial X_2} - \frac{\partial \widehat{x}_1}{\partial X_2} \frac{\partial \widehat{x}_2}{\partial X_1} \right)^2 \geq \lambda_{min}^2 .
\end{multline*}
\end{definition}

Let us explain the conventions used in this paper. We use the Greek characters $\alpha$, $\beta$, $\alpha'$, $\beta'$ as $1, 2$, that is, $\alpha, \beta, \alpha' , \beta' \in \{1 , 2 \}$. Moreover, we often use the following Einstein summation convention: $c_\alpha d_{\alpha \beta} = \sum_{\alpha = 1}^2 c_\alpha d_{\alpha \beta}$ and $c_{\beta'} d^{\alpha \beta'} = \sum_{\beta' = 1}^2 c_\alpha d_{\alpha \beta'}$. The symbol $d \mathcal{H}^k_x$ denotes the $k$-dimensional Hausdorff measure.

Next we define some notations. Let $\Gamma_0$ be a surface with a boundary $\partial \Gamma_0$. By definition, there are bounded domain $U \subset \mathbb{R}^2$ with a $C^2$-boundary $\partial U$ and $\widehat{x} = \widehat{x} (X) = { }^t ( \widehat{x}_1 , \widehat{x}_2 , \widehat{x}_3 ) \in [ C^3 ( \overline{U}) ]^3$ satisfying the properties as in Definition \ref{def21}. The symbol $n = n (x) = { }^t (n_1 , n_2 , n_3)$ denotes the unit outer normal vector at $x \in \overline{\Gamma_0}$, the symbol $\nu = \nu ( x) = { }^t ( \nu_1 , \nu_2 , \nu_3)$ denotes the unit outer co-normal vector at $x \in \partial \Gamma_0$, and the symbol $n^U = n^U (X) = { }^t (n_1^U , n_2^U)$ denotes the unit outer normal vector at $X \in \partial U$.

Fix $j \in \{ 1 , 2 , 3 \}$. For each $\psi \in C^1 ( \mathbb{R}^3)$, ${\bf{f}} = { }^t ( {\bf{f}}_1 , {\bf{f}}_2 , {\bf{f}}_3) \in [ C^1 ( \mathbb{R}^3)]^3$, and $f \in C^2 ( \mathbb{R}^3)$, ${\displaystyle{\partial_j^\Gamma \psi := \sum_{i=1}^3 ( \delta_{i j} - n_i n_j ) \partial_i \psi}}$, $\nabla_\Gamma := { }^t (\partial_1^\Gamma , \partial_2^\Gamma , \partial_3^\Gamma )$, ${\rm{div}}_\Gamma {\bf{f}} : = \nabla_\Gamma \cdot {\bf{f}} = \partial_1^\Gamma {\bf{f}}_1 + \partial_2^\Gamma {\bf{f}}_2 + \partial_3^\Gamma {\bf{f}}_3$, $\Delta_\Gamma f  := (\partial_1^\Gamma)^2 f  + (\partial_2^\Gamma)^2 f  + (\partial_3^\Gamma)^2 f $. Here $\delta_{ij}$ denotes the Kronecker delta.

We state the definitions of weak and strong solutions to system \eqref{eq11}.
\begin{definition}[Weak solutions]\label{def22}
Let $F \in L_0^2 ( \Gamma_0 )$ and $v \in H^1 (\Gamma_0)$. We call $v$ a \emph{weak solution} to system \eqref{eq11} if 
\begin{equation*}
\dual{ \nabla_\Gamma v , \nabla_\Gamma \psi } = \dual{F , \psi }
\end{equation*}
holds for all $\psi \in W^{1, 2} ( \Gamma_0 )$. Here $\dual{\cdot, \cdot}$ denotes the $L^2$-inner product defined by \eqref{eq37}.
\end{definition}

\begin{definition}[Strong solutions]\label{def23}
Let $2 \leq p < \infty$, and let $F \in L_0^p ( \Gamma_0 )$ and $v \in W^{2,p} (\Gamma_0)$. We call $v$ a \emph{strong $L^p$-solution} to system \eqref{eq11} if 
\begin{equation*}
\|  \Delta_\Gamma v + F \|_{L^p (\Gamma_0)} = 0 \text{ and  }\left\| \gamma_p [{\partial v}/{\partial \nu}] \right\|_{L^p (\partial \Gamma_0 )} = 0. 
\end{equation*}
Here $\gamma_p : W^{1,p} (\Gamma_0) \to L^p (\partial \Gamma_0)$ is the trace operator defined by Definition \ref{Def37}.
\end{definition}
\noindent See Section \ref{sect3} for the function spaces $L_0^p (\Gamma_0)$, $H^1 ( \Gamma_0)$, $W^{1,p} (\Gamma_0 )$, and $W^{2,p}(\Gamma_0)$.

Before stating the main results of this paper, we define some notations. For every $X \in \overline{U}$,
\begin{multline*}
\mathfrak{g}_1 = \mathfrak{g}_1 (X) := \frac{\partial \widehat{x}}{\partial X_1},{ \ }\mathfrak{g}_2 = \mathfrak{g}_2 (X) := \frac{\partial \widehat{x}}{\partial X_2}, { \ } \mathfrak{g}_{\alpha \beta} := \mathfrak{g}_\alpha \cdot \mathfrak{g}_\beta,\\
(\mathfrak{g}^{\alpha \beta})_{2 \times 2} : = ( \mathfrak{g}_{\alpha \beta})_{2\times 2}^{-1} = \frac{1}{ \mathfrak{g}_{11} \mathfrak{g}_{22} - \mathfrak{g}_{12} \mathfrak{g}_{21}}
\begin{pmatrix}
\mathfrak{g}_{22} & - \mathfrak{g}_{21}\\
- \mathfrak{g}_{12} & \mathfrak{g}_{22}
\end{pmatrix}, { \ }\mathfrak{g}^\alpha := \mathfrak{g}^{\alpha \beta} \mathfrak{g}_\beta,\\
\mathcal{G} = \mathcal{G} (X) := \mathfrak{g}_{11} \mathfrak{g}_{22} - \mathfrak{g}_{12} \mathfrak{g}_{21}.
\end{multline*}
Note that $\sqrt{ \mathcal{G}} = \sqrt{ | \mathfrak{g}_1 \times \mathfrak{g}_2|} = \lambda_{min} > 0$ from Definition \ref{def21}.

\begin{assumption}\label{ass24} There is $\lambda_0 >0$ such that for all $X \in \overline{U}$ and $\xi = { }^t ( \xi_1 , \xi_2 ) \in \mathbb{R}^2$
\begin{equation*}
\mathfrak{g}^{\alpha \beta} \xi_\alpha \xi_\beta \geq \lambda_0 (\xi_1^2 + \xi_2^2).
\end{equation*}
\end{assumption}
Throughout this paper, we assume that $\Gamma_0$ satisfies Assumption \ref{ass24}. We now state the main results of this paper.
\begin{theorem}[Strong $L^2$-solution]\label{thm25}
Let $F \in L_0^2 ( \Gamma_0 )$. Then there exists a unique strong $L^2$-solution $v$ in $H^1 (\Gamma_0) \cap W^{2,2} (\Gamma_0 )$ to system \eqref{eq11}.
\end{theorem}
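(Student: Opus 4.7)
The plan is to construct a weak solution first via the Lax--Milgram theorem and then promote it to a strong $W^{2,2}$-solution by comparing with the Dirichlet problem \eqref{eq15}, as outlined in the introduction. The whole argument lives naturally in the triple $H^1(\Gamma_0) \subset W^{1,2}(\Gamma_0) \subset L^2(\Gamma_0)$, with the mean-zero constraint playing the role of a compatibility/normalisation condition made possible by the hypothesis $F \in L_0^2(\Gamma_0)$.

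For the weak step, I would consider the bilinear form $a(u,\psi) := \dual{\nabla_\Gamma u, \nabla_\Gamma \psi}$ and the functional $\ell(\psi) := \dual{F,\psi}$ on the closed subspace $H^1(\Gamma_0) \subset W^{1,2}(\Gamma_0)$. Continuity of $a$ is immediate; coercivity follows from the surface Poincar\'e inequality on $H^1(\Gamma_0)$ stated in Section \ref{sect3}. Lax--Milgram then yields a unique $v \in H^1(\Gamma_0)$ with $a(v,\psi) = \ell(\psi)$ for all $\psi \in H^1(\Gamma_0)$. To upgrade this to the test-class $W^{1,2}(\Gamma_0)$ required by Definition \ref{def22}, decompose an arbitrary $\psi \in W^{1,2}(\Gamma_0)$ as $\psi = (\psi - \overline{\psi}) + \overline{\psi}$ with $\overline{\psi}$ the mean over $\Gamma_0$; the gradient term is insensitive to the constant, and $\dual{F,\overline{\psi}} = 0$ because $F \in L_0^2(\Gamma_0)$, so the identity extends.

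To gain $W^{2,2}$-regularity, I would take the trace $g := \gamma_2[v] \in L^2(\partial\Gamma_0)$ (with the appropriate fractional regularity inherited from $v \in W^{1,2}(\Gamma_0)$) and solve the auxiliary Dirichlet problem \eqref{eq15}. Pulling back to $U$ via $\widehat{x}$, system \eqref{eq13}$_1$ is a uniformly elliptic divergence-form equation in $U$ thanks to Assumption \ref{ass24}, with coefficients in $C^1(\overline{U})$ by the $C^2$-regularity of $\widehat{x}$, on a domain with $C^2$-boundary; the standard elliptic regularity machinery (to be developed in Section \ref{sect5}) then produces a unique $v_* \in W^{2,2}(\Gamma_0)$ solving \eqref{eq15} strongly. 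Comparing $w := v - v_*$: $w \in H^1(\Gamma_0)$ satisfies $a(w,\psi) = 0$ for all $\psi \in W^{1,2}(\Gamma_0)$ (in particular for all mean-zero $\psi$) together with $\gamma_2[w] = 0$, so the Dirichlet uniqueness forces $w \equiv 0$, giving $v = v_* \in W^{2,2}(\Gamma_0)$. With this regularity in hand, the surface divergence theorem applied to $\dual{\nabla_\Gamma v, \nabla_\Gamma \psi} - \dual{F,\psi} = 0$ identifies the boundary term as $\int_{\partial\Gamma_0} \gamma_2[\partial v/\partial\nu]\,\gamma_2[\psi]\,d\mathcal{H}^1$; ranging $\psi$ over $W^{1,2}(\Gamma_0)$ and using surjectivity of the trace onto a dense subspace of $L^2(\partial\Gamma_0)$ gives $\gamma_2[\partial v/\partial\nu] = 0$. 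Uniqueness of the strong solution follows by testing the homogeneous problem against the solution itself and invoking the Poincar\'e inequality on $H^1(\Gamma_0)$.

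The main obstacle I anticipate is the regularity step: producing $v_* \in W^{2,2}(\Gamma_0)$ for the Dirichlet problem with the prescribed boundary data $\gamma_2[v]$. Because the co-normal direction in the original formulation does not correspond to the Euclidean normal on $\partial U$, the boundary condition in \eqref{eq13}$_2$ is a genuine oblique-derivative condition which is awkward to handle directly; the present strategy sidesteps this by routing everything through the Dirichlet problem, where standard second-order elliptic theory on $U$ applies after checking uniform ellipticity via Assumption \ref{ass24} and chasing the coordinate change through $\widehat{x}$ and the metric $\mathcal{G}$.
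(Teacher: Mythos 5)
Your proposal follows the paper's proof essentially step for step: Lax--Milgram together with the surface Poincar\'e inequality on $H^1(\Gamma_0)$ for the weak solution (Proposition \ref{prop28}), extension of the test class from $H^1(\Gamma_0)$ to $W^{1,2}(\Gamma_0)$ by splitting off the mean (Lemma \ref{lem42}), $W^{2,2}$-regularity by routing through the Dirichlet problem, identification of the co-normal boundary condition via integration by parts against all of $W^{1,2}(\Gamma_0)$, and uniqueness from the Poincar\'e inequality. The only, essentially organisational, difference is that the paper does not solve \eqref{eq15} with data $\gamma_2 v$ and then invoke Dirichlet uniqueness; it instead subtracts a $W^{2,2}$-lifting $u$ of the trace $\gamma_2 v$ (Lemma \ref{Lem311}) and applies the homogeneous-Dirichlet regularity result (Proposition \ref{prop29}) to $w=v-u$ --- which is what your step amounts to once unwound, both versions hinging on the same lifting of the boundary data to a $W^{2,2}$ function.
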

\begin{theorem}[Strong $L^p$- and classical solutions]\label{thm26}
Let $2 < p < \infty$ and $F \in L_0^p ( \Gamma_0 )$. Then there exists a unique strong $L^p$-solution $v$ in $L_0^p(\Gamma_0 ) \cap W^{2,p} (\Gamma_0 ) \cap C^{1, 1-2/p}(\overline{\Gamma_0})$ to system \eqref{eq11}. Moreover, assume in addition that $\partial U$ is $C^3$-class, $\widehat{x} \in [C^3 ( \overline{U})]^3$, and that $F \in W^{1,p} (\Gamma_0)$. Then $v \in C^{2, 1 - 2/p } (\overline{\Gamma_0})$. 
\end{theorem}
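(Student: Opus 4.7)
The plan is to bootstrap the strong $L^{2}$-solution given by Theorem~\ref{thm25} up to $W^{2,p}$ regularity via the auxiliary Dirichlet problem~\eqref{eq15}. Since $\overline{\Gamma_0}=\widehat{x}(\overline{U})$ is compact and $p>2$, we have $L_0^p(\Gamma_0)\subset L_0^2(\Gamma_0)$; Theorem~\ref{thm25} therefore supplies a unique strong $L^{2}$-solution $v\in H^1(\Gamma_0)\cap W^{2,2}(\Gamma_0)$ of~\eqref{eq11}. Pulling back via $\widehat{v}(X):=v(\widehat{x}(X))$, the equation~\eqref{eq13} becomes a uniformly elliptic divergence-form equation on the $C^2$-domain $U$ with $C^1(\overline{U})$ coefficients (Assumption~\ref{ass24}) and right-hand side $\widehat{F}\in L^p(U)$.

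I would then regard $\widehat{v}$ as the Dirichlet solution with boundary data $g:=\widehat{v}|_{\partial U}$ and apply the $W^{2,p}$-regularity theory for the Dirichlet problem~\eqref{eq15} developed in Section~\ref{sect5}. Uniqueness for the Dirichlet problem in the $W^{2,2}$-class identifies the regularized solution with $\widehat{v}$ itself, and pulling back yields $v\in W^{2,p}(\Gamma_0)$. Uniqueness of the strong $L^{p}$-solution is then obtained by taking the difference of two candidates, noting it lies in $L_0^p\cap L_0^2$, and invoking Theorem~\ref{thm25}.

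The main technical obstacle is the regularity of the trace $g$: from $\widehat{v}\in W^{2,2}(U)$ one only gets $g\in H^{3/2}(\partial U)$, which does not embed into the space $W^{2-1/p,p}(\partial U)$ required by $W^{2,p}$-Dirichlet theory when $p>2$. I would overcome this by a short bootstrap through a finite increasing chain of exponents $2<p_1<\cdots<p_k=p$: at each step, use the improved integrability of $\widehat{v}$ and of $\nabla\widehat{v}$ gained at the previous stage, combined with interior $W^{2,p_j}$-estimates and refined trace analysis, to upgrade to $W^{2,p_j}$-regularity. After finitely many steps one arrives at $\widehat{v}\in W^{2,p}(U)$.

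The H\"older regularity $v\in C^{1,1-2/p}(\overline{\Gamma_0})$ follows from the two-dimensional Morrey embedding $W^{2,p}\hookrightarrow C^{1,1-2/p}$, valid for $p>2$. For the final assertion, the strengthened hypotheses $\partial U\in C^3$, $\widehat{x}\in[C^3(\overline{U})]^3$, and $F\in W^{1,p}(\Gamma_0)$ promote the pulled-back coefficients to $C^2(\overline{U})$ and the source to $\widehat{F}\in W^{1,p}(U)$. Differentiating~\eqref{eq13} tangentially and applying the $W^{2,p}$-Dirichlet estimate to the tangential derivatives of $\widehat{v}$, with the pure normal second derivative recovered algebraically from the equation, yields $\widehat{v}\in W^{3,p}(U)$; the two-dimensional Morrey embedding $W^{3,p}\hookrightarrow C^{2,1-2/p}$ finally gives $v\in C^{2,1-2/p}(\overline{\Gamma_0})$.
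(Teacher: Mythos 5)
Your skeleton matches the paper's: reduce to the $L^2$ theory of Theorem \ref{thm25}, pull the solution back to $U$, upgrade via Dirichlet $W^{2,p}$-theory, and finish with the two-dimensional Morrey embeddings $W^{2,p}\hookrightarrow C^{1,1-2/p}$ and $W^{3,p}\hookrightarrow C^{2,1-2/p}$. You have also correctly isolated the crux: the trace of $\widehat v\in W^{2,2}(U)$ lies only in $H^{3/2}(\partial U)$, which is not contained in $W^{2-1/p,p}(\partial U)$ when $p>2$. The problem is that your proposed resolution --- a finite chain $2<p_1<\cdots<p_k=p$ --- cannot close this gap. On the one-dimensional manifold $\partial U$, the embedding $W^{2-1/q,q}(\partial U)\hookrightarrow W^{2-1/p,p}(\partial U)$ with $p>q$ would require $(2-1/q)-1/q\ge (2-1/p)-1/p$, i.e. $p\le q$: the Sobolev number $2-2/p$ of the required trace space is strictly increasing in $p$, so each step of the chain demands strictly more boundary regularity than the previous step can deliver, no matter how small the increments are. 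Interior estimates do bootstrap, but the interior was never the obstruction; the phrase ``refined trace analysis'' is carrying the entire weight of the argument and is left unspecified. As written, the proof does not go through.

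The paper closes this step differently, and this is what you would need to supply. First, since $U\subset\mathbb{R}^2$, one has $W^{2,2}(U)\hookrightarrow W^{1,p}(U)$ for every $p<\infty$, so after passing to non-divergence form $\mathfrak g^{\alpha\beta}\,\partial^2\widehat w/\partial X_\alpha\partial X_\beta=F_\star$ the first-order terms can be absorbed into an $L^p$ right-hand side. Second, Lemma \ref{Lem311} is invoked to produce $V\in W^{2,p}(U)$ with $\widehat\gamma_p V=\widehat\gamma_p\widehat v$, so that $\widehat w=\widehat v-V\in W^{1,p}_0(U)$ and \cite[Theorem 9.15]{GT98} applies in a single stroke, with no iteration over exponents. (All of the boundary-trace difficulty you identified is concentrated in that lemma; an alternative that avoids it entirely is to prove $W^{2,p}$ estimates for the co-normal problem directly by flattening and reflection.) Your final paragraph, obtaining $C^{2,1-2/p}$ regularity by differentiating the equation tangentially and recovering $(\partial_{Y_2})^2$ from the equation, is a legitimate substitute for the paper's citation of \cite[Theorem 9.19]{GT98}, but it presupposes the $W^{2,p}$ step that is currently missing.
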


\begin{theorem}[Solvability of ${\rm{div}_\Gamma } V = F$]\label{thm27}$(\mathrm{i})$ Let $2 \leq p < \infty$, and let $F \in L_0^p ( \Gamma_0 )$ and $\chi \in W^{1,p}(\Gamma_0)$. Then there exists $V$ in $[ W^{1,p} (\Gamma_0 )]^3$ satisfying $\| {\rm{div}}_\Gamma V - F \|_{L^p (\Gamma_0)} = 0$, $\| V \cdot n - \chi \|_{L^p (\Gamma_0)}=0$, and $\| \gamma_p (V \cdot \nu ) \|_{L^p(\partial \Gamma_0)} = 0$. Here $\gamma_p : W^{1,p} (\Gamma_0) \to L^p (\partial \Gamma_0)$ is the trace operator defined by Definition \ref{Def37}.\\
$(\mathrm{ii})$ Assume that $\partial U$ is $C^3$-class and $\widehat{x} \in [ C^3 ( \overline{U})]^3$. Let $2 < p < \infty$, and let $F \in L_0^p ( \Gamma_0) \cap W^{1,p} (\Gamma_0)$ and $\chi \in W^{2,p} (\Gamma_0)$. Then there is $V \in [C^1 (\overline{\Gamma_0})]^3$ satisfying system \eqref{eq12}.
\end{theorem}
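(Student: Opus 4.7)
The strategy is to reduce system \eqref{eq12} to system \eqref{eq11} via the ansatz $V := -\nabla_\Gamma v + \chi n$, where $v$ will be a strong $L^p$-solution of an appropriately chosen instance of \eqref{eq11}. The boundary and normal conditions are built into this ansatz independently of the choice of $v$: since $\nabla_\Gamma v$ is tangential we have $V \cdot n = \chi |n|^2 = \chi$, and along $\partial\Gamma_0$ the co-normal $\nu$ is tangent to $\Gamma_0$ and perpendicular to $n$, so $V \cdot \nu = -\nabla_\Gamma v \cdot \nu = -\partial v/\partial \nu$. Consequently, the last two equations of \eqref{eq12} hold automatically once $v$ satisfies the co-normal boundary condition of \eqref{eq11}.

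For the divergence equation, compute
\[
\mathrm{div}_\Gamma V = -\Delta_\Gamma v + \mathrm{div}_\Gamma(\chi n),
\]
so $\mathrm{div}_\Gamma V = F$ is equivalent to $-\Delta_\Gamma v = F - \mathrm{div}_\Gamma(\chi n) =: \widetilde F$. Since $\chi \in W^{1,p}(\Gamma_0)$ and the unit normal $n$ is of class $C^1(\overline{\Gamma_0})$ (via $\widehat{x} \in [C^2(\overline U)]^3$), we have $\widetilde F \in L^p(\Gamma_0)$. I would then invoke Theorem \ref{thm25} (for $p=2$) or Theorem \ref{thm26} (for $p>2$) applied to $\widetilde F$ to obtain $v \in W^{2,p}(\Gamma_0)$ with $\partial v/\partial\nu = 0$, whereupon $V = -\nabla_\Gamma v + \chi n \in [W^{1,p}(\Gamma_0)]^3$ has the properties required in $(\mathrm{i})$.

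The main obstacle is the compatibility $\widetilde F \in L_0^p(\Gamma_0)$ needed to invoke Theorems \ref{thm25}--\ref{thm26}. Since $F \in L_0^p$, this reduces to checking $\int_{\Gamma_0} \mathrm{div}_\Gamma(\chi n)\, d\mathcal{H}^2 = 0$, which I expect to follow from the surface divergence identities established in Section \ref{sect3} together with the orthogonality $(\chi n)\cdot\nu = 0$ along $\partial\Gamma_0$. For part $(\mathrm{ii})$, the upgraded hypotheses $\partial U \in C^3$, $\widehat{x} \in [C^3(\overline U)]^3$, $F \in W^{1,p}(\Gamma_0)$, and $\chi \in W^{2,p}(\Gamma_0)$ ensure $\widetilde F \in W^{1,p}(\Gamma_0)$, so the classical regularity clause of Theorem \ref{thm26} yields $v \in C^{2, 1-2/p}(\overline{\Gamma_0})$. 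Combined with $\chi \in C^1(\overline{\Gamma_0})$ (by the Sobolev embedding $W^{2,p} \hookrightarrow C^1$ for $p>2$ in two dimensions) and $n \in C^1(\overline{\Gamma_0})$, this gives $V \in [C^1(\overline{\Gamma_0})]^3$ and the equations of \eqref{eq12} hold pointwise.
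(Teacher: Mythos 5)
Your construction is exactly the paper's: set $V = -\nabla_\Gamma v + \chi n$ with $v$ a strong solution of \eqref{eq11} for the datum $\widetilde F = F - \mathrm{div}_\Gamma(\chi n) = F + \chi H_\Gamma$; the normal and co-normal conditions then hold automatically, and the regularity bookkeeping in both parts matches the paper's (your case split between Theorem \ref{thm25} for $p=2$ and Theorem \ref{thm26} for $p>2$ is, if anything, the more careful reading).

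The step you defer --- $\int_{\Gamma_0}\mathrm{div}_\Gamma(\chi n)\,d\mathcal{H}^2_x = 0$, i.e. $\int_{\Gamma_0}\chi H_\Gamma\,d\mathcal{H}^2_x = 0$ --- is a genuine gap, and it does \emph{not} follow from the surface divergence theorem. Applying Lemma \ref{Lem32} to the field $\chi n$ gives
\[
\int_{\Gamma_0}\mathrm{div}_\Gamma(\chi n)\,d\mathcal{H}^2_x = -\int_{\Gamma_0}H_\Gamma\,\chi\,d\mathcal{H}^2_x + \int_{\partial\Gamma_0}\chi\,(\nu\cdot n)\,d\mathcal{H}^1_x,
\]
and since $\mathrm{div}_\Gamma(\chi n) = \nabla_\Gamma\chi\cdot n + \chi\,\mathrm{div}_\Gamma n = -\chi H_\Gamma$ and $\nu\cdot n = 0$, both sides equal $-\int_{\Gamma_0}\chi H_\Gamma\,d\mathcal{H}^2_x$: the identity is a tautology and yields no information. (The paper's own proof invokes precisely this computation to conclude $\int_{\Gamma_0}\chi H_\Gamma\,d\mathcal{H}^2_x = 0$, so the gap is inherited from the source rather than introduced by you.) In fact this vanishing is a nontrivial \emph{necessary} compatibility condition: if $V$ solves \eqref{eq12} with $F\in L_0^p(\Gamma_0)$, then Lemma \ref{Lem32} applied to $V$ forces $0 = \int_{\Gamma_0}F = -\int_{\Gamma_0}H_\Gamma\chi\,d\mathcal{H}^2_x$. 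It fails, for instance, on a spherical cap ($H_\Gamma$ a nonzero constant) with $\chi\equiv 1$ and $F\equiv 0$. To close the argument you must either add $\int_{\Gamma_0}\chi H_\Gamma\,d\mathcal{H}^2_x = 0$ as a hypothesis or restrict to the case $H_\Gamma\equiv 0$; with that assumption in place, $\widetilde F\in L_0^p(\Gamma_0)$ and the remainder of your proof is correct.
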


To prove Theorems \ref{thm25}-\ref{thm27}, we prove the following two key propositions.
\begin{proposition}[Weak solution]\label{prop28}
Let $F \in L_0^2 ( \Gamma_0 )$. Then there exists a unique weak solution $v$ in $H^1 (\Gamma_0)$ to system \eqref{eq11}. Moreover, there is $C_\dagger = C_\dagger ( \Gamma_0 ) >0$ such that
\begin{equation*}
\| v \|_{W^{1,2} ( \Gamma_0) } \leq C_\dagger \| F \|_{L^2 ( \Gamma_0 )}.
\end{equation*}
\end{proposition}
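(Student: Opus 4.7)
The plan is to invoke the Lax--Milgram theorem on the Hilbert space $H^1(\Gamma_0)$ equipped with the $W^{1,2}(\Gamma_0)$-inner product. Define the bilinear form $a : H^1 (\Gamma_0) \times H^1 (\Gamma_0) \to \mathbb{R}$ by $a (v, \psi) := \dual{ \nabla_\Gamma v , \nabla_\Gamma \psi }$ and the linear functional $L : H^1 (\Gamma_0) \to \mathbb{R}$ by $L(\psi) := \dual{ F, \psi }$. Boundedness is immediate from Cauchy--Schwarz: $|a(v,\psi)| \leq \| v \|_{W^{1,2}(\Gamma_0)} \| \psi \|_{W^{1,2}(\Gamma_0)}$ and $|L(\psi)| \leq \| F \|_{L^2(\Gamma_0)} \| \psi \|_{W^{1,2}(\Gamma_0)}$.

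The crucial ingredient is coercivity, which is supplied by the surface Poincar\'e inequality for $H^1(\Gamma_0)$-functions that will be established in subsection \ref{subsec33}: there is $C_P = C_P (\Gamma_0) > 0$ with $\| \psi \|_{L^2(\Gamma_0)} \leq C_P \| \nabla_\Gamma \psi \|_{L^2(\Gamma_0)}$ for every $\psi \in H^1(\Gamma_0)$. Combining this with the definition of the $W^{1,2}$-norm yields $a(v,v) = \| \nabla_\Gamma v \|_{L^2(\Gamma_0)}^2 \geq (1 + C_P^2)^{-1} \| v \|_{W^{1,2}(\Gamma_0)}^2$. The Lax--Milgram theorem then produces a unique $v \in H^1(\Gamma_0)$ with $a(v, \psi) = L(\psi)$ for every $\psi \in H^1(\Gamma_0)$, and the usual Lax--Milgram estimate gives $\| v \|_{W^{1,2}(\Gamma_0)} \leq C_\dagger \| F \|_{L^2(\Gamma_0)}$ with $C_\dagger := (1 + C_P^2) C_P$ (or a comparable constant after using $\|\psi\|_{L^2} \le C_P \|\nabla_\Gamma \psi\|_{L^2}$ on the right-hand side of $L$).

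The next step is to upgrade the class of admissible test functions from $H^1(\Gamma_0)$ to all of $W^{1,2}(\Gamma_0)$ as required by Definition \ref{def22}. Given an arbitrary $\psi \in W^{1,2}(\Gamma_0)$, set $\bar{\psi} := (\mathcal{H}^2(\Gamma_0))^{-1} \int_{\Gamma_0} \psi \, d \mathcal{H}_y^2$ and $\widetilde{\psi} := \psi - \bar{\psi}$, so that $\widetilde{\psi} \in H^1(\Gamma_0)$. Since $\partial_j^\Gamma c = 0$ for any constant $c$, we have $\nabla_\Gamma \widetilde{\psi} = \nabla_\Gamma \psi$, whence
\begin{equation*}
a(v, \psi) = a(v, \widetilde{\psi}) = L(\widetilde{\psi}) = \dual{F, \psi} - \bar{\psi} \int_{\Gamma_0} F \, d \mathcal{H}_y^2 = \dual{F, \psi},
\end{equation*}
the last equality using $F \in L_0^2(\Gamma_0)$. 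This establishes the weak formulation for every test function in $W^{1,2}(\Gamma_0)$, and uniqueness within $H^1(\Gamma_0)$ is inherited from the Lax--Milgram conclusion.

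The main obstacle is the coercivity step, which hinges on the surface Poincar\'e inequality for zero-mean functions; once that is in hand, the remaining arguments are standard. The choice of $L_0^2(\Gamma_0)$ for the data and $H^1(\Gamma_0)$ for the solution is dictated by the obvious compatibility condition: integrating $-\Delta_\Gamma v = F$ against the constant $1$ and using the surface divergence theorem together with $\partial v / \partial \nu|_{\partial \Gamma_0} = 0$ forces $\int_{\Gamma_0} F\, d\mathcal{H}_y^2 = 0$, while solutions are determined only up to an additive constant, which the zero-mean condition pins down.
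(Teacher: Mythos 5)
Your proposal is correct and follows essentially the same route as the paper: Lax--Milgram on $H^1(\Gamma_0)$ with coercivity from the surface Poincar\'e inequality \eqref{eq330}, followed by extension of the test-function class from $H^1(\Gamma_0)$ to $W^{1,2}(\Gamma_0)$ by subtracting the mean and using $F \in L_0^2(\Gamma_0)$. The only notable difference is that the paper first treats $F \in H^1(\Gamma_0)$ and then passes to general $F \in L_0^2(\Gamma_0)$ by density, a detour your direct application of Lax--Milgram (which only needs $\psi \mapsto \dual{F,\psi}$ to be bounded on $H^1(\Gamma_0)$, true for any $F \in L^2(\Gamma_0)$) avoids, and you make the a priori estimate explicit where the paper leaves it implicit.
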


\begin{proposition}[Regularity for weak solutions]\label{prop29}
Let $F \in L^2 (\Gamma_0 )$ and $w \in W_0^{1,2} (\Gamma_0)$. Assume that 
\begin{equation*}
\dual{\nabla_\Gamma w , \nabla_\Gamma \phi } = \dual{F , \phi}
\end{equation*}
holds for all $\phi \in W_0^{1,2} (\Gamma_0)$. Then $w \in W^{2,2} (\Gamma_0)$ and there is $C_{\ddagger} = C_{\ddagger} (\Gamma_0) >0$ such that
\begin{equation*}
\| w \|_{W^{2,2} ( \Gamma_0 )} \leq C_{\ddagger} \| F \|_{L^2 ( \Gamma_0 )}.
\end{equation*}
\end{proposition}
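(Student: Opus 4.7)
The plan is to pull the problem back to the two-dimensional parameter domain $U$ via the global chart $\widehat{x}$ and then invoke classical $W^{2,2}$-regularity for the Dirichlet problem on a bounded planar domain with $C^2$-boundary. Since $\Gamma_0$ admits a single global parametrization, no partition of unity is required, which keeps the reduction clean.

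First, I would set $\widehat{w}(X) := w(\widehat{x}(X))$ and $\widehat{F}(X) := F(\widehat{x}(X))$. Using the coordinate representations of $\nabla_\Gamma$ and the area element $d\mathcal{H}_x^2 = \sqrt{\mathcal{G}}\, dX$ established in Section \ref{sect3}, the weak identity $\dual{\nabla_\Gamma w, \nabla_\Gamma \phi} = \dual{F, \phi}$ transforms, for every test function $\widehat{\phi} \in W_0^{1,2}(U)$, into
\begin{equation*}
\int_U \sqrt{\mathcal{G}}\, \mathfrak{g}^{\alpha\beta} \frac{\partial \widehat{w}}{\partial X_\beta} \frac{\partial \widehat{\phi}}{\partial X_\alpha}\, dX = \int_U \sqrt{\mathcal{G}}\, \widehat{F}\, \widehat{\phi}\, dX.
\end{equation*}
Because $w \in W_0^{1,2}(\Gamma_0)$, the pullback lies in $W_0^{1,2}(U)$, so $\widehat{w}$ is a weak solution of the Dirichlet problem for the divergence-form operator with coefficient matrix $(\sqrt{\mathcal{G}}\,\mathfrak{g}^{\alpha\beta})$.

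Next I would verify the hypotheses of the standard elliptic regularity theorem. Assumption \ref{ass24} together with the lower bound $\sqrt{\mathcal{G}} \geq \lambda_{min} > 0$ from Definition \ref{def21} yields uniform ellipticity on $\overline{U}$; and since $\widehat{x} \in [C^2(\overline{U})]^3$, the coefficients $\sqrt{\mathcal{G}}\,\mathfrak{g}^{\alpha\beta}$ are Lipschitz on $\overline{U}$. With $\partial U$ of class $C^2$ and $\sqrt{\mathcal{G}}\widehat{F} \in L^2(U)$, the classical $W^{2,2}$-regularity theorem for the Dirichlet problem (e.g.\ Gilbarg--Trudinger, Theorem 8.12) gives $\widehat{w} \in W^{2,2}(U)$ together with the estimate
\begin{equation*}
\|\widehat{w}\|_{W^{2,2}(U)} \leq C \bigl\|\sqrt{\mathcal{G}}\,\widehat{F}\bigr\|_{L^2(U)} \leq C' \|\widehat{F}\|_{L^2(U)},
\end{equation*}
where the constants depend only on $\Gamma_0$ through $\widehat{x}$, $\lambda_0$, $\lambda_{min}$, and $\partial U$.

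Finally I would transfer the regularity back to the surface. Since $\widehat{x}: \overline{U} \to \overline{\Gamma_0}$ is a $C^2$-diffeomorphism with non-degenerate Jacobian, the chain rule and the uniform bounds on $\mathfrak{g}_\alpha, \mathfrak{g}^{\alpha\beta}, \sqrt{\mathcal{G}}$ yield the equivalence $\|\cdot\|_{W^{2,2}(\Gamma_0)} \sim \|\cdot\|_{W^{2,2}(U)}$ and $\|\cdot\|_{L^2(\Gamma_0)} \sim \|\cdot\|_{L^2(U)}$ under pullback, which immediately produces $w \in W^{2,2}(\Gamma_0)$ with $\|w\|_{W^{2,2}(\Gamma_0)} \leq C_\ddagger \|F\|_{L^2(\Gamma_0)}$. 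The main obstacle I anticipate is purely bookkeeping: writing down the chain rule for the tangential second derivatives $\partial_i^\Gamma \partial_j^\Gamma w$ in terms of the Euclidean derivatives of $\widehat{w}$ carefully enough to confirm that the surface $W^{2,2}$-norm defined in Section \ref{sect3} is indeed controlled by (and controls) the flat $W^{2,2}(U)$-norm of the pullback, with constants depending only on $\Gamma_0$.
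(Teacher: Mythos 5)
Your proposal is correct, and it reaches the conclusion by a genuinely different route from the paper. You reduce everything to the single global chart on $U$ and then invoke the classical global $W^{2,2}$-regularity theorem for divergence-form Dirichlet problems (Gilbarg--Trudinger, Theorem 8.12); the hypotheses do check out: $\sqrt{\mathcal{G}}\,\mathfrak{g}^{\alpha\beta}\in C^1(\overline{U})\subset C^{0,1}(\overline{U})$ because $\widehat{x}\in[C^2(\overline{U})]^3$ and $\mathcal{G}\geq\lambda_{min}^2>0$, uniform ellipticity follows from Assumption \ref{ass24} combined with the lower bound on $\sqrt{\mathcal{G}}$, $\partial U$ is $C^2$, and $\widehat{w}\in W_0^{1,2}(U)$ by the very definition of $W_0^{1,2}(\Gamma_0)$, so one may take $\varphi=0$ as the boundary datum. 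The paper instead proves the regularity from scratch: it covers $\partial U$ by finitely many balls, flattens the boundary with the explicit $C^2$, measure-preserving maps of Lemma \ref{lem51}, verifies that ellipticity survives the change of variables (Lemma \ref{lem52}), and runs the tangential difference-quotient argument of Evans \S 5.8 on each piece, recovering $(\partial_{Y_2})^2\widetilde{w}_1$ from the equation; a partition of unity glues the local estimates. Your approach is much shorter but outsources the analytic content to a cited theorem; the paper's is self-contained (and, for what it is worth, the paper does cite Gilbarg--Trudinger Theorems 9.15 and 9.19 in the $L^p$ setting of Theorem \ref{thm26}, so your citation is stylistically consistent with the rest of the article). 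One small point to patch: the estimate in Gilbarg--Trudinger Theorem 8.12 reads $\|\widehat{w}\|_{W^{2,2}(U)}\leq C(\|\widehat{w}\|_{L^2(U)}+\|\sqrt{\mathcal{G}}\,\widehat{F}\|_{L^2(U)})$, with the $L^2$-norm of the solution on the right, so to obtain the stated bound purely in terms of $\|F\|_{L^2(\Gamma_0)}$ you must first control $\|\widehat{w}\|_{L^2(U)}$ by $\|F\|_{L^2(\Gamma_0)}$ via the energy estimate (test with $\phi=w$ and apply the Poincar\'e inequality \eqref{eq331}); this is exactly Lemma \ref{Lem53} in the paper's proof and should be stated explicitly in yours.
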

In Section \ref{sect4}, we prove Proposition \ref{prop28} to show the existence of a weak solution to system \eqref{eq11}. In Section \ref{sect5}, we derive Proposition \ref{prop29} to study the regularity for weak solutions to the surface Poisson system. In Section \ref{sect6}, we prove Theorems \ref{thm25}-\ref{thm27} to show the existence of strong solutions to our equations.

\section{Function spaces on a surface}\label{sect3}
In this section, we study some function spaces on the surface $\Gamma_0$. In subsection \ref{subsec31}, we recall the representation formula for differential operators on the surface $\Gamma_0$ and the surface divergence theorem. In subsection \ref{subsec32}, we introduce the function spaces $L^p (\Gamma_0)$, $W^{1,p} (\Gamma_0)$, $W^{2,p}(\Gamma_0)$, and the trace operator $\gamma_p : W^{1,p} ( \Gamma_0 ) \to L^p (\partial \Gamma_0)$. In subsection \ref{subsec33}, we investigate the function spaces $L_0^p (\Gamma_0)$ and $H^1 (\Gamma_0)$, and derive the surface Poincar\'{e} inequalities.

Let $\breve{X} = \breve{X} (x)$ be the inverse mapping of $\widehat{x} = \widehat{x}(X)$, i.e. $\breve{X}: \overline{\Gamma_0} \to \overline{U}$ and for all $X \in \overline{U}$ and $x \in \overline{\Gamma_0}$, $\breve{X}(\widehat{x}(X)) =X$ and $\widehat{x}(\breve{X} (x)) = x$. Throughout this paper, for $\psi = \psi (x)$ and $\varphi = \varphi (X)$,
\begin{equation*}
\widehat{ \psi } = \widehat{ \psi } (X) := \psi ( \widehat{x} (X) ) \text{ and } \breve{\varphi} = \breve{\varphi}(x) := \varphi (\breve{X}(x)) .
\end{equation*}

\subsection{Differential operators on a surface}\label{subsec31}
Let us recall the representation formula for differential operators on the surface $\Gamma_0$ and the surface divergence theorem. From \cite[Chapter 3]{Jos11}, \cite[Appendix]{DE07}, \cite[Section 3]{K18}, and \cite[Section 3]{K19a}, we obtain the following lemma.
\begin{lemma}[Representation formula for differential operators]\label{Lem31}{ \ }\\
$(\mathrm{i})$ For each $\psi \in C (\mathbb{R}^3)$,
\begin{equation}\label{eq31}
\int_{\Gamma_0} \psi (x) { \ }d \mathcal{H}^2_x = \int_U \widehat{\psi} \sqrt{\mathcal{G}} { \ }d X.
\end{equation}
$(\mathrm{ii})$ For each $j=1,2,3,$ and $\psi \in C^1 (\mathbb{R}^3)$,
\begin{equation}\label{eq32}
\int_{\Gamma_0} \partial_j^\Gamma \psi (x) { \ }d \mathcal{H}^2_x = \int_U \mathfrak{g}^{\alpha \beta} \frac{\partial \widehat{x}_j}{\partial X_\alpha} \frac{\partial \widehat{\psi}}{\partial X_\beta} \sqrt{\mathcal{G}} { \ }d X.
\end{equation}
$(\mathrm{iii})$ For each $i,j=1,2,3$, and $\psi \in C^2 (\mathbb{R}^3)$,
\begin{equation}\label{eq33}
\int_{\Gamma_0} \partial_i^\Gamma \partial_j^\Gamma \psi { \ }d \mathcal{H}^2_x =  \int_U \mathfrak{g}^{\alpha' \beta'} \frac{\partial \widehat{x}_i}{\partial X_{\alpha'} } \frac{\partial}{\partial X_{\beta'}} \left( \mathfrak{g}^{\alpha \beta} \frac{\partial \widehat{x}_j}{\partial X_\alpha} \frac{\partial \widehat{\psi} }{\partial X_\beta} \right) \sqrt{ \mathcal{G} } { \ }d X.
\end{equation}
$(\mathrm{iv})$ For each $\psi \in C^2 (\mathbb{R}^3)$,
\begin{equation}\label{eq34}
\int_{\Gamma_0} \Delta_\Gamma \psi (x) { \ }d \mathcal{H}^2_x = \int_U \left\{ \frac{1}{\sqrt{\mathcal{G}}} \frac{\partial}{\partial X_\alpha} \left( \sqrt{\mathcal{G}} \mathfrak{g}^{\alpha \beta} \frac{\partial \widehat{\psi} }{\partial X_\beta} \right) \right\} \sqrt{\mathcal{G}} { \ }d X.
\end{equation}
$(\mathrm{v})$ For each $\psi_\sharp ,\psi_\flat \in C^1 (\mathbb{R}^3)$,
\begin{equation}\label{eq35}
\int_{\Gamma_0} \nabla_\Gamma \psi_\sharp \cdot \nabla_\Gamma \psi_\flat (x) { \ }d \mathcal{H}^2_x = \int_U \mathfrak{g}^{\alpha \beta} \frac{\partial \widehat{\psi}_\sharp}{\partial X_\alpha} \frac{\partial \widehat{\psi}_\flat}{\partial X_\beta} \sqrt{\mathcal{G}} { \ }d X.
\end{equation}
\end{lemma}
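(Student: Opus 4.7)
The plan is to reduce all five identities to a single change-of-variables, $d\mathcal{H}^2_x = \sqrt{\mathcal{G}}\,dX$ on $\overline{U}$, combined with one pointwise formula for the tangential gradient expressed in local coordinates. I would organize the work in three stages.

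\emph{Stage 1 — the area formula (i).} By the Lagrange identity,
\begin{equation*}
|\mathfrak{g}_1\times\mathfrak{g}_2|^2 = |\mathfrak{g}_1|^2|\mathfrak{g}_2|^2-(\mathfrak{g}_1\cdot\mathfrak{g}_2)^2 = \mathfrak{g}_{11}\mathfrak{g}_{22}-\mathfrak{g}_{12}\mathfrak{g}_{21}=\mathcal{G},
\end{equation*}
so Definition \ref{def21}(iv) gives $\sqrt{\mathcal{G}}\geq\lambda_{min}>0$ on $\overline{U}$ and $\widehat{x}$ is a $C^2$ immersion; together with the injectivity in Definition \ref{def21}(ii) it is a $C^2$ parametrization of $\overline{\Gamma_0}$, and the standard parametrized-surface area formula yields (i).

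\emph{Stage 2 — the tangential-gradient formula, and (ii), (v).} At each $x=\widehat{x}(X)\in\Gamma_0$, the vectors $\mathfrak{g}_1(X),\mathfrak{g}_2(X)$ form a basis of $T_x\Gamma_0$, and the orthogonal projection onto $T_x\Gamma_0$ has Cartesian entries
\begin{equation*}
P_{jk}=\delta_{jk}-n_jn_k=\mathfrak{g}^{\alpha\beta}\frac{\partial\widehat{x}_j}{\partial X_\alpha}\frac{\partial\widehat{x}_k}{\partial X_\beta};
\end{equation*}
this dual-basis identity is verified by checking that both sides agree on $\mathfrak{g}_\gamma$ (via $\mathfrak{g}^{\alpha\beta}\mathfrak{g}_{\beta\gamma}=\delta^\alpha_\gamma$) and on $n$ (both give zero). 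Combining with the chain rule $\mathfrak{g}_\beta\cdot\nabla\psi=\partial_{X_\beta}\widehat{\psi}$ produces the key pointwise identity
\begin{equation*}
\bigl(\partial_j^\Gamma\psi\bigr)(\widehat{x}(X))=\mathfrak{g}^{\alpha\beta}(X)\frac{\partial\widehat{x}_j}{\partial X_\alpha}(X)\frac{\partial\widehat{\psi}}{\partial X_\beta}(X),
\end{equation*}
which, multiplied by $\sqrt{\mathcal{G}}$ and integrated over $U$ via (i), is exactly (ii). For (v), I apply this identity to both $\psi_\sharp$ and $\psi_\flat$, sum on $j$, and collapse the result using $\frac{\partial\widehat{x}_j}{\partial X_\alpha}\frac{\partial\widehat{x}_j}{\partial X_{\alpha'}}=\mathfrak{g}_{\alpha\alpha'}$ together with the inverse-matrix identity $\mathfrak{g}^{\alpha\beta}\mathfrak{g}^{\alpha'\beta'}\mathfrak{g}_{\alpha\alpha'}=\mathfrak{g}^{\beta\beta'}$ to obtain $\nabla_\Gamma\psi_\sharp\cdot\nabla_\Gamma\psi_\flat = \mathfrak{g}^{\beta\beta'}\partial_{X_\beta}\widehat{\psi}_\sharp\partial_{X_{\beta'}}\widehat{\psi}_\flat$; (i) then yields (v).

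\emph{Stage 3 — the second-order identities (iii), (iv).} For (iii), I iterate Stage 2: apply the pointwise identity to $\partial_j^\Gamma\psi$ in place of $\psi$, noting $(\partial_j^\Gamma\psi)^\wedge=\mathfrak{g}^{\alpha\beta}\partial_{X_\alpha}\widehat{x}_j\partial_{X_\beta}\widehat{\psi}$ is $C^1$ on $\overline{U}$; multiplying by $\sqrt{\mathcal{G}}$ and using (i) gives (iii) verbatim. For (iv), the cleanest route is to recognize $\Delta_\Gamma\psi=\mathrm{div}_\Gamma\nabla_\Gamma\psi$, substitute the Stage 2 representation of $\nabla_\Gamma\psi$ into the surface divergence theorem, and simplify using Jacobi's formula $\partial_{X_\alpha}\sqrt{\mathcal{G}}=\tfrac12\sqrt{\mathcal{G}}\mathfrak{g}^{\beta\gamma}\partial_{X_\alpha}\mathfrak{g}_{\beta\gamma}$ together with the product rule, producing the classical Laplace--Beltrami divergence form $\sqrt{\mathcal{G}}^{\,-1}\partial_{X_\alpha}(\sqrt{\mathcal{G}}\mathfrak{g}^{\alpha\beta}\partial_{X_\beta}\widehat{\psi})$; integrating against $\sqrt{\mathcal{G}}$ then gives (iv). The only non-routine step is this Stage 3 rearrangement, because one must simultaneously differentiate $\sqrt{\mathcal{G}}$, the metric inverse $\mathfrak{g}^{\alpha\beta}$, and $\partial_{X_\alpha}\widehat{x}_j$; organizing the computation as $\mathrm{div}_\Gamma\nabla_\Gamma\psi$ rather than by direct expansion of (iii) avoids having to cancel these terms by brute force.
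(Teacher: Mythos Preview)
Your proof is correct and in fact supplies more than the paper does: the paper does not prove Lemma~\ref{Lem31} at all but simply cites \cite{Jos11}, \cite{DE07}, \cite{K18}, and \cite{K19a} for these standard representation formulas. Your three-stage organization---area formula via the Lagrange identity, the projection identity $P_{jk}=\mathfrak{g}^{\alpha\beta}\partial_{X_\alpha}\widehat{x}_j\,\partial_{X_\beta}\widehat{x}_k$ giving the pointwise tangential-gradient formula, then iteration plus Jacobi's formula---is exactly the computation one would find carried out in those references.

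One small point of language in Stage~3: for (iv) you say you will ``substitute the Stage~2 representation of $\nabla_\Gamma\psi$ into the surface divergence theorem,'' but the surface divergence theorem (Lemma~\ref{Lem32} in the paper) is an integral identity, whereas what you actually need here is the \emph{pointwise} local-coordinate formula $\mathrm{div}_\Gamma V=\sqrt{\mathcal{G}}^{\,-1}\partial_{X_\alpha}(\sqrt{\mathcal{G}}\,V^\alpha)$ for a tangent field $V=V^\alpha\mathfrak{g}_\alpha$. That pointwise formula is precisely what comes from the Christoffel contraction $\Gamma^\beta_{\beta\alpha}=\partial_{X_\alpha}\log\sqrt{\mathcal{G}}$, which in turn is Jacobi's formula---so your substance is right and the computation goes through as you describe; only the label is slightly off.
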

From \cite[Chapter 2]{Sim83} and \cite[Section 3]{K19b}, we have
\begin{lemma}[Surface divergence theorem]\label{Lem32}For every ${\bf{f}} = { }^t ( {\bf{f}}_1 , {\bf{f}}_2 , {\bf{f}}_3 ) \in [C^1 ( \mathbb{R}^3 )]^3$,
\begin{equation*}
\int_{\Gamma_0} {\rm{div}}_\Gamma {\bf{f}} { \ }d \mathcal{H}^2_x = - \int_{\Gamma_0} H_\Gamma (n \cdot {\bf{f}}) { \ } d \mathcal{H}^2_x + \int_{\partial \Gamma_0} \nu \cdot {\bf{f}} { \ } d \mathcal{H}^1_x,
\end{equation*}
where $\rm{div}_\Gamma {\bf{f}} = \partial_1^\Gamma {\bf{f}}_1 + \partial_2^\Gamma {\bf{f}}_2 + \partial_3^\Gamma {\bf{f}}_3$ and  $H_\Gamma = H_\Gamma (x )$ denotes the mean curvature in the direction $n$ defined by $H_\Gamma = - {\rm{div}}_\Gamma n$. 
\end{lemma}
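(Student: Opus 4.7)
The plan is to invoke Lemma \ref{Lem31} componentwise and reduce the identity to a single integration by parts in the parameter domain $U$, after which the interior residual becomes the mean-curvature term and the boundary residual becomes the co-normal flux.

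First I would apply Lemma \ref{Lem31}(ii) to $\psi = {\bf{f}}_j$, sum over $j = 1, 2, 3$, and use $\partial \widehat{x}_j/\partial X_\alpha = (\mathfrak{g}_\alpha)_j$ to obtain
\begin{equation*}
\int_{\Gamma_0} {\rm{div}}_\Gamma {\bf{f}} \, d \mathcal{H}^2_x = \int_U \sqrt{\mathcal{G}} \, \mathfrak{g}^{\alpha\beta} \, \mathfrak{g}_\alpha \cdot \frac{\partial \widehat{{\bf{f}}}}{\partial X_\beta} \, dX.
\end{equation*}
The planar divergence theorem on $U$ then rewrites this as
\begin{equation*}
\int_{\partial U} \sqrt{\mathcal{G}} \, \mathfrak{g}^{\alpha\beta} (\mathfrak{g}_\alpha \cdot \widehat{{\bf{f}}}) \, n^U_\beta \, d \mathcal{H}^1_X - \int_U \widehat{{\bf{f}}} \cdot \frac{\partial}{\partial X_\beta} \bigl( \sqrt{\mathcal{G}} \, \mathfrak{g}^{\alpha\beta} \, \mathfrak{g}_\alpha \bigr) \, dX.
\end{equation*}

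For the interior term, the key input is the Beltrami formula for the position vector. Applying Lemma \ref{Lem31}(iv) componentwise to the coordinate functions $\widehat{x}_j$ and invoking the symmetry $\mathfrak{g}^{\alpha\beta} = \mathfrak{g}^{\beta\alpha}$ yields, componentwise, $\frac{1}{\sqrt{\mathcal{G}}} \partial_\beta \bigl( \sqrt{\mathcal{G}} \mathfrak{g}^{\alpha\beta} \mathfrak{g}_\alpha \bigr) = \Delta_\Gamma \widehat{x}$. A short direct computation from $\partial_i^\Gamma x_j = \delta_{ij} - n_i n_j$ shows $\Delta_\Gamma \widehat{x}_j = -n_j \, {\rm{div}}_\Gamma n$, so $\Delta_\Gamma \widehat{x} = H_\Gamma n$ in view of the convention $H_\Gamma = -{\rm{div}}_\Gamma n$. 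Substituting and transporting the integral from $U$ back to $\Gamma_0$ via Lemma \ref{Lem31}(i) turns the interior residual into $-\int_{\Gamma_0} H_\Gamma (n \cdot {\bf{f}}) \, d \mathcal{H}^2_x$.

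The main work lies in identifying the boundary residual with $\int_{\partial \Gamma_0} \nu \cdot {\bf{f}} \, d \mathcal{H}^1_x$. Parametrizing $\partial \Gamma_0 = \widehat{x}(\partial U)$ and using that the unit tangent to $\partial U$ is $\tau^U = { }^t(-n^U_2, n^U_1)$, the tangent vector to $\partial \Gamma_0$ in $\mathbb{R}^3$ is $T := n^U_1 \mathfrak{g}_2 - n^U_2 \mathfrak{g}_1$, giving $d \mathcal{H}^1_x \big|_{\partial \Gamma_0} = |T| \, d \mathcal{H}^1_X \big|_{\partial U}$, while the outward co-normal takes the form $\nu = (T/|T|) \times n$ (the vector appearing in the boundary condition of \eqref{eq13}). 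Hence $|T| \, \nu \cdot \widehat{{\bf{f}}} = (T \times n) \cdot \widehat{{\bf{f}}}$. Using $n = (\mathfrak{g}_1 \times \mathfrak{g}_2)/\sqrt{\mathcal{G}}$ together with the identity $(a \times b) \times c = (a \cdot c) b - (b \cdot c) a$, a direct scalar-triple-product calculation shows $(T \times n) \cdot \mathfrak{g}_\alpha = n^U_\alpha \sqrt{\mathcal{G}}$. Since $(T \times n) \cdot n = 0$, the decomposition $\widehat{{\bf{f}}} = \mathfrak{g}^{\alpha\beta}(\mathfrak{g}_\beta \cdot \widehat{{\bf{f}}}) \mathfrak{g}_\alpha + (n \cdot \widehat{{\bf{f}}}) n$ and the symmetry of $\mathfrak{g}^{\alpha\beta}$ produce $(T \times n) \cdot \widehat{{\bf{f}}} = \sqrt{\mathcal{G}} \, \mathfrak{g}^{\alpha\beta}(\mathfrak{g}_\alpha \cdot \widehat{{\bf{f}}}) \, n^U_\beta$, which matches the integrand on $\partial U$ arising from the planar divergence theorem. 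The principal obstacle is precisely this geometric identification of $\nu$ and the boundary Jacobian $|T|$; the rest is bookkeeping on top of the representation lemma.
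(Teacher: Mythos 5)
Your proof is correct, and it is worth noting that the paper itself offers no proof of Lemma \ref{Lem32}: it is simply quoted from \cite[Chapter 2]{Sim83} and \cite[Section 3]{K19b}. Simon's route goes through the first variation of area in the framework of geometric measure theory; your argument is instead a self-contained parametric proof (pull back to $U$, integrate by parts there, identify the interior residual with $\Delta_\Gamma \widehat{x} = H_\Gamma n$ and the boundary residual with the co-normal flux), which is exactly in the spirit of the representation formulas the paper does record in Lemma \ref{Lem31} and of the formula $\nu = \frac{n_1^U \mathfrak{g}_2 - n_2^U \mathfrak{g}_1}{|n_1^U \mathfrak{g}_2 - n_2^U \mathfrak{g}_1|} \times \frac{\mathfrak{g}_1 \times \mathfrak{g}_2}{|\mathfrak{g}_1 \times \mathfrak{g}_2|}$ that the paper quotes from \cite{K19b}. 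All the computations check out: $(T \times n)\cdot \mathfrak{g}_\alpha = [T, n, \mathfrak{g}_\alpha] = n_\alpha^U\, n\cdot(\mathfrak{g}_1\times\mathfrak{g}_2) = n_\alpha^U\sqrt{\mathcal{G}}$, the tangential/normal decomposition of $\widehat{{\bf f}}$ is legitimate because $\mathfrak{g}_\beta \cdot n = 0$, and $\sum_i n_i \partial_i^\Gamma = 0$ makes $\Delta_\Gamma x_j = -n_j\,\mathrm{div}_\Gamma n$ immediate. The one presentational gap is that Lemma \ref{Lem31} is stated as a family of equalities of integrals over all of $\Gamma_0$ and $U$, whereas your steps (in particular the identification $\frac{1}{\sqrt{\mathcal{G}}}\partial_{X_\beta}(\sqrt{\mathcal{G}}\,\mathfrak{g}^{\alpha\beta}\mathfrak{g}_\alpha) = \Delta_\Gamma \widehat{x}$) use the corresponding \emph{pointwise} identities of the integrands; equality of two integrals over a fixed domain does not by itself give equality of integrands. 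This is harmless --- the pointwise chain-rule identities are what underlie Lemma \ref{Lem31} in the first place, and one can formally recover them by inserting arbitrary cutoff factors --- but a sentence making that localization explicit would make the argument airtight.
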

From \cite[Section 3]{K19b} we find that the unit outer co-normal vector $\nu$ is represented by
\begin{equation*}
\nu ( \widehat{x} (X) ) = \frac{n_1^U \mathfrak{g}_2 - n_2^U \mathfrak{g}_1}{ | n_1^U \mathfrak{g}_2 -n_2^U \mathfrak{g}_1| } \times \frac{\mathfrak{g}_1 \times \mathfrak{g}_2}{ | \mathfrak{g}_1 \times \mathfrak{g}_2 | } \text{ for a.e. } X \in \partial U.
\end{equation*}

\subsection{Function spaces on a surface}\label{subsec32}
Let us define and study basic function spaces on the surface $\Gamma_0$. For $k=0,1,2$, $0 < \kappa <1$, and $1 \leq p < \infty$,
\begin{align*}
C^k ( \Gamma_0) & := \{ \psi : \Gamma_0 \to \mathbb{R} ;{ \ }\psi = \breve{\varphi}, { \ }\varphi \in C^k ( U )\},\\
C^k ( \overline{\Gamma_0}) & := \{ \psi : \overline{\Gamma_0} \to \mathbb{R} ;{ \ }\psi = \breve{\varphi}, { \ }\varphi \in C^k (\overline{U})\},\\
C^k_0 ( \Gamma_0) & := \{ \psi : \Gamma_0 \to \mathbb{R} ;{ \ }\psi = \breve{\varphi}, { \ }\varphi \in  C_0^k(U)\},\\
C^{k,\kappa} ( \overline{\Gamma_0}) & := \{ \psi : \overline{\Gamma_0} \to \mathbb{R} ;{ \ }\psi = \breve{\varphi}, { \ }\varphi \in C^{k,\kappa} (\overline{U})\},\\
L^p ( \Gamma_0) & := \{ \psi : \Gamma_0 \to \mathbb{R} ;{ \ }\psi = \breve{\varphi}, { \ }\varphi \in L^p(U),{ \ }\| \psi \|_{L^p (\Gamma_0)} < \infty \},\\
L^{\infty} ( \Gamma_0) & := \{ \psi : \Gamma_0 \to \mathbb{R} ;{ \ }\psi = \breve{\varphi}, { \ }\varphi \in L^\infty (U),{ \ }\| \psi \|_{L^\infty (\Gamma_0)} < \infty \}.
\end{align*}
Here
\begin{align}
\| \psi \|_{L^p( \Gamma_0)} & := \left( \int_{\Gamma_0} | \breve{\varphi} (x) |^p { \ }d \mathcal{H}^2_x \right)^{\frac{1}{p}},\label{eq36}\\
\| \psi \|_{L^\infty ( \Gamma_0)} & : = \text{ess.sup}_{x \in \Gamma_0} | \breve{\varphi} (x)|.\notag
\end{align}
Moreover, for $\psi \in C ( \overline{\Gamma_0}) (= C^0 (\overline{\Gamma_0}))$,
\begin{equation*}
\| \psi |_{\partial \Gamma_0 } \|_{L^p ( \partial \Gamma_0 ) } := \left( \int_{\partial \Gamma_0} | { \ }\breve{\varphi}|_{\partial \Gamma_0} { \ } |^p { \ } d \mathcal{H}_x^1 \right)^{\frac{1}{p} },
\end{equation*}
and for every $\psi_\sharp \in L^p (\Gamma_0)$ and $\psi_\flat \in L^{p'} (\Gamma_0 )$
\begin{equation}\label{eq37}
\dual{\psi_\sharp , \psi_\flat} := \int_{\Gamma_0} \psi_\sharp (x) \psi_\flat (x) { \ }d \mathcal{H}^2_x,
\end{equation}
where $1 < p' \leq \infty$ such that $1/p + 1/{p'} = 1$. Define the inner product of $L^2 (\Gamma_0)$ by \eqref{eq37}.

\begin{remark}\label{Rem33}
Since $0 < \lambda_{min} \leq \sqrt{ \mathcal{G}} < + \infty$, it follows from \eqref{eq31} to see that there is $C>0$ such that for each $\psi \in L^p(\Gamma_0)$ and $\varphi \in L^p (U)$ such that $\psi = \breve{\varphi}$,
\begin{equation}\label{eq38}
\lambda_{min}^{\frac{1}{p}} \| \varphi \|_{L^p (U)} \leq \| \psi \|_{L^p (\Gamma_0)} \leq \lambda_{max}^{\frac{1}{p}} \| \varphi \|_{L^p (U)}.
\end{equation}
Here
\begin{equation*}
\lambda_{max} := \max_{X \in \overline{U}} | \sqrt{ \mathcal{G} }|.
\end{equation*}
We also see that for each $\psi_\sharp \in L^p (\Gamma_0)$ and $\psi_\flat \in L^{p'} (\Gamma_0 )$
\begin{equation}\label{eq39}
| \dual{\psi_\sharp , \psi_\flat} | \leq \| \psi_\sharp \|_{L^p(\Gamma_0 )} \| \psi_\flat \|_{L^{p'} (\Gamma_0 )},
\end{equation}
where $1 \leq p , p' \leq \infty$ such that $1/p + 1/{p'} = 1$.
\end{remark}

Next we define a weak derivative for functions on the surface $\Gamma_0$. For $\psi \in C^k ( \overline{\Gamma_0})$ or $\psi \in C_0^k ( \Gamma_0 )$, we define the differential operators $\partial_j^\Gamma$ and $\partial_i^\Gamma \partial_j^\Gamma $ as in Lemma \ref{Lem31}.
\begin{definition}[Weak derivatives]\label{Def34}
Let $1 \leq p \leq \infty$, $\psi \in L^p ( \Gamma_0 )$, and $j=1,2,3$. We say that $\partial^\Gamma_j \psi \in L^p (\Gamma_0 )$ if there exists $\Upsilon \in L^p ( \Gamma_0)$ such that for all $\phi \in C_0^1 ( \Gamma_0)$,
\begin{equation*}
\int_{\Gamma_0} \Upsilon \phi { \ }d \mathcal{H}^2_x = - \int_{\Gamma_0} \psi (\partial_j^\Gamma \phi + H_\Gamma n_j \phi ) { \ }d \mathcal{H}^2_x.
\end{equation*}
In particular, we write $\Upsilon$ as $\partial_j^\Gamma \psi$.
\end{definition}

Now we introduce Sobolev spaces on the surface $\Gamma_0$. For $1 \leq p < \infty$,
\begin{align*}
W^{1,p} ( \Gamma_0) & := \{ \psi : \Gamma_0 \to \mathbb{R} ;{ \ }\psi = \breve{\varphi}, { \ }\varphi \in W^{1,p}(U),{ \ }\| \psi \|_{W^{1,p} (\Gamma_0)} < \infty \},\\
W_0^{1,p} ( \Gamma_0) & := \{ \psi : \Gamma_0 \to \mathbb{R} ;{ \ }\psi = \breve{\varphi}, { \ }\varphi \in W_0^{1,p}(U),{ \ }\| \psi \|_{W^{1,p} (\Gamma_0)} < \infty \},\\
W^{2,p} ( \Gamma_0) & := \{ \psi : \Gamma_0 \to \mathbb{R} ;{ \ }\psi = \breve{\varphi}, { \ }\varphi \in W^{2,p}(U),{ \ }\| \psi \|_{W^{2,p} (\Gamma_0)} < \infty \}.
\end{align*}
Here
\begin{align}
\| \psi \|_{W^{1,p}( \Gamma_0)} & := \left( \int_{\Gamma_0} ( | \breve{\varphi}(x) |^p +  | \nabla_\Gamma \breve{\varphi}(x)  |^p ) { \ }d \mathcal{H}^2_x \right)^{\frac{1}{p}},\label{eq310}\\
\| \psi \|_{W^{2,p}( \Gamma_0)} & := \left( \int_{\Gamma_0} ( | \breve{\varphi}(x) |^p +  | \nabla_\Gamma \breve{\varphi}(x)|^p +  | \nabla_\Gamma^2 \breve{\varphi}(x)  |^p ) { \ }d \mathcal{H}^2_x \right)^{\frac{1}{p}}.\label{eq311}
\end{align}
Moreover, we define the inner product of $W^{1,2} (\Gamma_0)$ as follows: for $\psi_\sharp, \psi_\flat \in W^{1,2} (\Gamma_0)$
\begin{equation*}
\dual{\psi_\sharp , \psi_\flat}_{W^{1,2}} := \dual{\psi_\sharp , \psi_\flat} + \dual{\nabla_\Gamma \psi_\sharp , \nabla_\Gamma \psi_\flat}.
\end{equation*}

\begin{remark}\label{Rem35}
$(\mathrm{i})$ From \eqref{eq31}, \eqref{eq32}, \eqref{eq36}, \eqref{eq310}, we see that there is $C>0$ such that for each $\psi \in C^1 (\mathbb{R}^3)$ and $\varphi \in C^1 (\mathbb{R}^2)$ such that $\psi = \breve{\varphi}$
\begin{equation}\label{eq312}
\| \psi \|_{W^{1,p} (\Gamma_0 )} \leq C \| \varphi \|_{W^{1,p} (U)}.
\end{equation}
$(\mathrm{ii})$ From \eqref{eq31}-\eqref{eq33}, \eqref{eq36}, \eqref{eq310}, \eqref{eq311}, we see that there is $C>0$ such that for each $\psi \in C^2 (\mathbb{R}^3)$ and $\varphi \in C^2 (\mathbb{R}^2)$ such that $\psi = \breve{\varphi}$
\begin{equation}\label{eq313}
\| \psi \|_{W^{2,p} (\Gamma_0 )} \leq C \| \varphi \|_{W^{2,p} (U)}.
\end{equation}
$(\mathrm{iii})$ From Assumption \ref{ass24} and \eqref{eq35}, we see that there is $C>0$ such that for each $\psi \in C^1 (\mathbb{R}^3)$ and $\varphi \in C^1 (\mathbb{R}^2)$ such that $\psi = \breve{\varphi}$
\begin{align}
C^{-1} \| \nabla_X \varphi \|_{L^2(U)} \leq \| \nabla_\Gamma \psi \|_{L^2 (\Gamma_0)} \leq C \| \nabla_X \varphi \|_{L^2 (U)},\label{eq314}\\
C^{-1} \| \varphi \|_{W^{1,2}(U)} \leq \| \psi \|_{W^{1,2} (\Gamma_0)} \leq C \| \varphi \|_{W^{1,2} (U)}.\label{eq315}
\end{align}
\end{remark}

Using Remark \ref{Rem35}, we have
\begin{lemma}\label{Lem36}Let $1 \leq p < \infty$. Then\\
$(\mathrm{i})$ The function space $C_0(\Gamma_0)$ is dense in $L^p(\Gamma_0)$.\\
$(\mathrm{ii})$ The function space $C_0^1(\Gamma_0)$ is dense in $W_0^{1,p}(\Gamma_0)$.\\
$(\mathrm{iii})$ The function space $C^1 (\overline{ \Gamma_0 })$ is dense in $W^{1,p}(\Gamma_0)$.\\
$(\mathrm{iv})$ The function space $C^2 (\overline{ \Gamma_0 })$ is dense in $W^{2,p}(\Gamma_0)$.
\end{lemma}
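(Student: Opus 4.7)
The plan is to reduce each density statement to the corresponding classical density result on the flat domain $U \subset \mathbb{R}^2$ via the parametrization $\widehat{x}: \overline{U} \to \overline{\Gamma_0}$, using the pullback $\psi \mapsto \widehat{\psi} = \psi \circ \widehat{x}$ and its inverse $\varphi \mapsto \breve{\varphi} = \varphi \circ \breve{X}$. The key observation is that the function spaces on $\Gamma_0$ are defined precisely as the $\breve{\cdot}$-image of the corresponding Euclidean spaces on $U$, so the approximation problem transfers cleanly once we verify that the pullback is a bicontinuous bijection in the relevant norm.

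Concretely, for part (i), given $\psi = \breve{\varphi} \in L^p(\Gamma_0)$ with $\varphi \in L^p(U)$, I would invoke the standard fact that $C_0(U)$ is dense in $L^p(U)$ to pick $\varphi_n \in C_0(U)$ with $\varphi_n \to \varphi$ in $L^p(U)$; then $\psi_n := \breve{\varphi}_n \in C_0(\Gamma_0)$ and the upper bound in \eqref{eq38} gives $\|\psi - \psi_n\|_{L^p(\Gamma_0)} \leq \lambda_{max}^{1/p}\|\varphi - \varphi_n\|_{L^p(U)} \to 0$. Part (ii) is identical in structure: use that $C_0^1(U)$ is dense in $W_0^{1,p}(U)$ (standard, by mollification since functions in $W_0^{1,p}(U)$ extend by zero to $\mathbb{R}^2$), and apply the analogue of \eqref{eq312} to transfer the bound. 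For part (iii), the classical density of $C^1(\overline{U})$ in $W^{1,p}(U)$ holds because $U$ has a $C^2$-boundary (hence satisfies the segment condition, so Meyers--Serrin together with the extension theorem applies); then \eqref{eq312} gives the desired approximation in $W^{1,p}(\Gamma_0)$. For part (iv), I would use the analogous density of $C^2(\overline{U})$ in $W^{2,p}(U)$ and the bound \eqref{eq313}.

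The one point that requires care is the \emph{reverse} norm inequality, because the argument outlined only controls $\|\psi - \psi_n\|$ on $\Gamma_0$ by $\|\varphi - \varphi_n\|$ on $U$, and that direction is precisely what the upper bounds in \eqref{eq38}, \eqref{eq312}, \eqref{eq313} give. So the forward transfer is automatic, and the lower bounds in \eqref{eq38} and \eqref{eq315} are needed only to guarantee that the approximants $\psi_n$ actually lie in the stated spaces on $\Gamma_0$ (that is, that $\breve{\varphi}_n \in C_0^k(\Gamma_0)$ or $C^k(\overline{\Gamma_0})$), which is built into the very definitions of those spaces in Section \ref{subsec32}.

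The main (mild) obstacle is the density of $C^k(\overline{U})$ in $W^{k,p}(U)$ for $k = 1, 2$, but this is a standard consequence of the $C^2$-regularity of $\partial U$ assumed in Definition \ref{def21}; I would simply cite a reference such as Adams or Evans for these facts rather than reprove them. Once the classical density results on $U$ are invoked, each of the four statements follows by a one-line norm transfer using the relevant inequality from Remark \ref{Rem33} or Remark \ref{Rem35}.
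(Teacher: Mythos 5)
Your proposal is correct and follows essentially the same route as the paper: the paper's proof (given only for part (iii)) likewise pulls the function back to $U$, approximates there by the classical Euclidean density theorem, and transfers the convergence to $\Gamma_0$ via the norm comparison \eqref{eq312}. Your treatment of the remaining parts (i), (ii), (iv) by the same transfer using \eqref{eq38} and \eqref{eq313} is exactly what the paper leaves implicit.
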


\begin{proof}[Proof of Lemma \ref{Lem36}]
We only prove $(\mathrm{iii})$. Let $\psi \in W^{1,p} (\Gamma_0)$. By definition, there is $\varphi \in W^{1,p}(U)$ such that $\psi =\breve{ \varphi}$. Since $C^2(\overline{U})$ is dense in $W^{1,p} (U)$, there are $\varphi_k \in C^2 (\overline{U})$ such that
\begin{equation}\label{eq316}
\lim_{k \to \infty} \| \varphi - \varphi_k \|_{W^{1,p} (U)} = 0.
\end{equation}
Set $\psi_k = \breve{\varphi}_k$. Then $\psi_k \in C^2 (\overline{\Gamma_0})$. By \eqref{eq312} and \eqref{eq316}, we check that 
\begin{equation*}
\lim_{k \to \infty} \| \psi - \psi_k \|_{W^{1,p} (\Gamma_0)} = 0.
\end{equation*}
Therefore, the assertion $(\mathrm{iii})$ is proved.
\end{proof}

Let us study the trace operator $\gamma_p : W^{1,p} (\Gamma_0) \to L^p ( \partial \Gamma_0 )$.
\begin{definition}[Trace operator]\label{Def37}{ \ }\\
Fix $1 \leq p < \infty$. Define $\gamma_p : W^{1,p} (\Gamma_0) \to L^p ( \partial \Gamma_0 )$ as follows:\\
Let $\psi \in W^{1,p} ( \Gamma_0)$ and $\varphi \in W^{1,p} (U)$ such that $\psi = \breve{\varphi}$. By the assertion $(\rm{iii})$ of Lemma \ref{Lem36}, there are $\psi_k \in C^1 ( \overline{\Gamma_0})$ and $\varphi_k \in C^1 (\overline{U})$ such that $\psi_k = \breve{\varphi}_k$,
\begin{align*}
& \lim_{k \to \infty} \| \varphi - \varphi_k \|_{W^{1,p} (U)} = 0,\\
& \lim_{k \to \infty} \| \psi - \psi_k \|_{W^{1,p} (\Gamma_0)} = 0.
\end{align*}
From the definition of integral line, we see that
\begin{align*}
\int_{\partial \Gamma_0} |\psi_m|_{\partial \Gamma_0} - \psi_k|_{\partial \Gamma_0} |^p { \ }d \mathcal{H}_x^1 & = \int_{\partial U} |\varphi_m|_{\partial U} - \varphi_k|_{\partial U} |^p |n_1^U \mathfrak{g}_2 - n_2^U \mathfrak{g}_1 | { \ }d \mathcal{H}_X^1\\
& \leq C \| \varphi_m - \varphi_k \|_{W^{1,p} (U)}^p \to 0 \text{ (as }m, k \to \infty ). 
\end{align*}
Therefore, we define
\begin{equation*}
\gamma_p \psi = \lim_{k \to \infty} \breve{\varphi}_k \text{ in }L^p (\partial \Gamma_0).
\end{equation*}
Note that we observe that
\begin{align*}
\| \gamma_p \psi \|_{L^p ( \Gamma_0 )}^p &= \lim_{m \to \infty} \int_{\partial \Gamma_0} | { \ }\breve{\varphi}_m|_{\partial \Gamma_0}{ \ }|^p { \ }d \mathcal{H}^1_x\\
& = \lim_{m \to \infty} \int_{\partial U} | { \ } \varphi_m|_{\partial U} { \ }|^p |n_1^U \mathfrak{g}_2 - n_2^U \mathfrak{g}_1 | { \ } d \mathcal{H}^1_X\\
& = \int_{\partial U} | \widehat{\gamma}_p \varphi |^p |n_1^U \mathfrak{g}_2 - n_2^U \mathfrak{g}_1 | { \ } d \mathcal{H}^1_X \leq C \| \widehat{\gamma}_p \varphi \|_{L^p( \partial U )}^p.
\end{align*}
Here $\widehat{\gamma}_p : W^{1,p}(U) \to L^p (\partial U)$ is the trace operator. Note also that
\begin{equation*}
\gamma_p \psi = \psi|_{\partial \Gamma_0} \text{ if } \psi \in W^{1,p} (\Gamma_0) \cap C^1 ( \overline{\Gamma_0}).
\end{equation*}
\end{definition}

Using Lemmas \ref{Lem31}, \ref{Lem32}, Remark \ref{Rem35}, and Lemma \ref{Lem36}, we have the following two lemmas.
\begin{lemma}[Properties of $W^{1,p}(\Gamma_0)$ and $W^{2,p} ( \Gamma_0)$]\label{Lem38}{ \ }\\
$(\mathrm{i})$ Let $\psi \in W^{1,p} (\Gamma_0)$ and $\varphi \in W^{1,p}(U)$ such that $\psi = \breve{\varphi}$. Then for each $j=1,2,3$, $\partial_j^\Gamma \psi \in L^p (\Gamma_0)$ and
\begin{equation*}
\int_{\Gamma_0} \partial_j^\Gamma \psi { \ }d \mathcal{H}^2_x = \int_U \mathfrak{g}^{\alpha \beta} \frac{\partial \widehat{x}_j}{\partial X_\alpha} \frac{\partial \varphi}{\partial X_\beta} \sqrt{ \mathcal{G} } { \ }d X.
\end{equation*}
$(\mathrm{ii})$ For each $\psi_\sharp , \psi_\flat \in W^{1,p} (\Gamma_0)$ and $\varphi_\sharp, \varphi_\flat \in W^{1,p}(U)$ such that $\psi_\sharp = \breve{\varphi}_\sharp$ and $\psi_\flat = \breve{\varphi}_\flat$,
\begin{equation*}
\int_{\Gamma_0} \nabla_\Gamma \psi_\sharp \cdot \nabla_\Gamma \psi_\flat (x) { \ }d \mathcal{H}^2_x = \int_U \mathfrak{g}^{\alpha \beta} \frac{\partial \varphi_\sharp}{\partial X_\alpha} \frac{\partial \varphi_\flat}{\partial X_\beta} \sqrt{\mathcal{G}} { \ }d X.
\end{equation*}
$(\mathrm{iii})$ Let $\psi \in W^{2,p} (\Gamma_0)$ and $\varphi \in W^{2,p}(U)$ such that $\psi = \breve{\varphi}$. Then for each $i,j = 1,2,3$, $\partial_j^\Gamma \partial_i^\Gamma \psi \in L^p (\Gamma (t))$ and
\begin{equation*}
\int_{\Gamma_0} \partial_i^\Gamma \partial_j^\Gamma \psi { \ }d \mathcal{H}^2_x =  \int_U \mathfrak{g}^{\alpha' \beta'} \frac{\partial \widehat{x}_i}{\partial X_{\alpha'} } \frac{\partial'}{\partial X_{\beta'}} \left( \mathfrak{g}^{\alpha \beta} \frac{\partial \widehat{x}_j}{\partial X_\alpha} \frac{\partial \varphi }{\partial X_\beta} \right) \sqrt{ \mathcal{G} } { \ }d X.
\end{equation*}
$(\mathrm{iv})$ For each $\psi \in W^{2,p} (\Gamma_0)$ and $\varphi \in W^{2,p}(U)$ such that $\psi = \breve{\varphi}$,
\begin{equation*}
\int_{\Gamma_0} \Delta_\Gamma \psi (x) { \ }d \mathcal{H}^2_x = \int_U \left\{ \frac{1}{\sqrt{\mathcal{G}}} \frac{\partial}{\partial X_\alpha} \left( \sqrt{\mathcal{G}} \mathfrak{g}^{\alpha \beta} \frac{\partial \varphi }{\partial X_\beta} \right) \right\} \sqrt{\mathcal{G}} { \ }d X.
\end{equation*}
\end{lemma}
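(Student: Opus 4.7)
\medskip
\noindent\textbf{Proof plan.}
The strategy is purely a density/approximation argument: every identity in Lemma \ref{Lem38} is the Sobolev-space extension of a smooth-function identity already recorded in Lemma \ref{Lem31}. The coefficients $\mathfrak{g}^{\alpha\beta}$, $\partial \widehat{x}_j/\partial X_\alpha$, $\sqrt{\mathcal G}$, and (for (iii)--(iv)) the first derivatives of $\mathfrak{g}^{\alpha\beta}\sqrt{\mathcal G}$ are all uniformly bounded on $\overline U$ because $\widehat x\in[C^2(\overline U)]^3$ and Assumption \ref{ass24} holds. Combined with \eqref{eq38}, this will let me move freely between convergence in $W^{k,p}(U)$ and convergence in $L^p(\Gamma_0)$.

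For (i), given $\psi=\breve\varphi$ with $\varphi\in W^{1,p}(U)$, I will use Lemma \ref{Lem36}(iii) to fix $\varphi_k\in C^2(\overline U)$ with $\varphi_k\to\varphi$ in $W^{1,p}(U)$ and set $\psi_k=\breve\varphi_k\in C^2(\overline{\Gamma_0})$. For each $\psi_k$ the classical tangential derivative $\partial_j^\Gamma\psi_k$ is represented in coordinates by
\[
\partial_j^\Gamma\psi_k = \Bigl[\mathfrak g^{\alpha\beta}\tfrac{\partial\widehat x_j}{\partial X_\alpha}\tfrac{\partial\varphi_k}{\partial X_\beta}\Bigr]_{X=\breve X(x)},
\]
so the uniform bounds and \eqref{eq38} make $\{\partial_j^\Gamma\psi_k\}$ Cauchy in $L^p(\Gamma_0)$ with some limit $\Upsilon_j$. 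To identify $\Upsilon_j$ with the weak derivative of Definition \ref{Def34}, I apply Lemma \ref{Lem32} to the vector field $\mathbf f=\psi_k\phi\,e_j$ for $\phi\in C_0^1(\Gamma_0)$: the boundary integral vanishes because $\phi$ has compact support in $\Gamma_0$, and the product rule on $\partial_j^\Gamma(\psi_k\phi)$ yields
\[
\int_{\Gamma_0}(\partial_j^\Gamma\psi_k)\phi\,d\mathcal H^2_x
=-\int_{\Gamma_0}\psi_k\bigl(\partial_j^\Gamma\phi+H_\Gamma n_j\phi\bigr)\,d\mathcal H^2_x.
\]
Since $\psi_k\to\psi$ in $L^p(\Gamma_0)$ and $\partial_j^\Gamma\psi_k\to\Upsilon_j$ in $L^p(\Gamma_0)$, passing to the limit produces exactly the defining identity in Definition \ref{Def34}, so $\Upsilon_j=\partial_j^\Gamma\psi\in L^p(\Gamma_0)$. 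The stated integral formula is then obtained by integrating the pointwise coordinate representation over $\Gamma_0$ via the change-of-variables identity \eqref{eq31}, applied to $\psi_k$ and passed to the limit.

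For (ii), I will apply \eqref{eq35} to the approximating pair $(\psi_{\sharp,k},\psi_{\flat,k})$ and pass to the limit using H\"older's inequality, the $L^p$-convergence of $\partial_j^\Gamma\psi_{\sharp,k}$ and $\partial_j^\Gamma\psi_{\flat,k}$ established in (i), and the boundedness of $\mathfrak g^{\alpha\beta}\sqrt{\mathcal G}$. Parts (iii) and (iv) run along the same lines: approximate $\psi\in W^{2,p}(\Gamma_0)$ by $\psi_k\in C^2(\overline{\Gamma_0})$ in the $W^{2,p}$-norm via Lemma \ref{Lem36}(iv), apply \eqref{eq33} and \eqref{eq34} to each $\psi_k$, and pass to the limit using the uniform $C^1$-bounds on $\mathfrak g^{\alpha\beta}\sqrt{\mathcal G}$ guaranteed by $\widehat x\in[C^2(\overline U)]^3$. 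The iterated weak derivative $\partial_i^\Gamma\partial_j^\Gamma\psi$ is identified by first applying (i) to $\psi$ (yielding $\partial_j^\Gamma\psi\in W^{1,p}(\Gamma_0)$ via the coordinate formula) and then applying (i) once more to $\partial_j^\Gamma\psi$.

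The only subtle step is (i): verifying the weak-derivative identity of Definition \ref{Def34}, because it requires integration by parts on a surface with boundary and the mean-curvature term $H_\Gamma n_j\phi$ must appear with the correct sign. Choosing $\phi\in C_0^1(\Gamma_0)$ (so that the co-normal boundary integral in Lemma \ref{Lem32} drops out) and writing $\mathrm{div}_\Gamma(\psi_k\phi\,e_j)=(\partial_j^\Gamma\psi_k)\phi+\psi_k\partial_j^\Gamma\phi$ handles both points simultaneously, after which the remaining parts of the lemma are routine limit passages.
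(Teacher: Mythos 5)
Your proposal is correct and follows exactly the route the paper intends: the paper omits the proof ("left for the readers") but states that Lemma \ref{Lem38} follows from Lemmas \ref{Lem31}, \ref{Lem32}, Remark \ref{Rem35}, and Lemma \ref{Lem36}, which is precisely your combination of density, the smooth representation formulas, the surface divergence theorem applied to $\psi_k\phi\,e_j$ to verify Definition \ref{Def34}, and the norm equivalences to pass to the limit. The identification of the iterated weak derivative in (iii) by applying (i) twice is also the natural reading of the statement, so no gap.
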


\begin{lemma}\label{Lem39}{ \ }\\
$(\mathrm{i})$ There is $C>0$ such that for each $\psi \in W^{1,p} (\Gamma_0)$ and $\varphi \in W^{1,p}(U)$ satisfying $\psi =\breve{ \varphi}$,
\begin{equation}\label{eq317}
\| \psi \|_{W^{1,p} (\Gamma_0 )} \leq C \| \varphi \|_{W^{1,p} (U)}.
\end{equation}
$(\mathrm{ii})$ There is $C>0$ such that for each $\psi \in W^{2,p} (\Gamma_0)$ and $\varphi \in W^{2,p}(U)$ satisfying $\psi =\breve{ \varphi}$,
\begin{equation}\label{eq318}
\| \psi \|_{W^{2,p} (\Gamma_0 )} \leq C \| \varphi \|_{W^{2,p} (U)}.
\end{equation}
$(\mathrm{iii})$ There is $C>0$ such that for each $\psi \in W^{1,2} (\Gamma_0)$ and $\varphi \in W^{1,2}(U)$ satisfying $\psi =\breve{ \varphi}$,
\begin{align}
& C^{-1} \| \nabla_X \varphi \|_{L^2(U)} \leq \| \nabla_\Gamma \psi \|_{L^2 (\Gamma_0)} \leq C \| \nabla_X \varphi \|_{L^2 (U)},\label{eq319}\\
& C^{-1} \| \varphi \|_{W^{1,2}(U)} \leq \| \psi \|_{W^{1,2} (\Gamma_0)} \leq C \| \varphi \|_{W^{1,2} (U)}.\label{eq320}
\end{align}
\end{lemma}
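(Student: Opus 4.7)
The plan is to reduce every integral over $\Gamma_0$ back to an integral over $U$ via the change-of-variable formula \eqref{eq31} and the tangential derivative representation in Lemma \ref{Lem38}, and then to exploit the uniform boundedness of the metric coefficients to convert these identities into the desired two-sided bounds. Because $\widehat{x}\in [C^2(\overline{U})]^3$ on the compact set $\overline{U}$, the quantities $\mathfrak{g}_\alpha$, $\mathfrak{g}_{\alpha\beta}$, $\mathfrak{g}^{\alpha\beta}$, $\sqrt{\mathcal{G}}$, and $\partial_{X_\gamma}\mathfrak{g}^{\alpha\beta}$ all lie in $C^0(\overline U)$ and are therefore uniformly bounded above; moreover $\sqrt{\mathcal{G}}\ge\lambda_{\min}>0$ by Definition~\ref{def21} and $\mathfrak{g}^{\alpha\beta}\xi_\alpha\xi_\beta\ge\lambda_0|\xi|^2$ by Assumption~\ref{ass24}. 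These are the only inputs that will really be used.

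For $(\mathrm{i})$, by density (Lemma~\ref{Lem36}(iii)) it suffices to treat $\psi\in C^1(\overline{\Gamma_0})$ with $\varphi\in C^1(\overline{U})$ and $\psi=\breve\varphi$. From the pointwise form of Lemma~\ref{Lem31}(ii), $\partial_j^\Gamma\psi(\widehat{x}(X))=\mathfrak{g}^{\alpha\beta}(\partial_\alpha\widehat{x}_j)(\partial_\beta\varphi)$, so the uniform bounds on $\mathfrak{g}^{\alpha\beta}$ and $\partial_\alpha\widehat{x}_j$ yield a pointwise inequality $|\nabla_\Gamma\psi(\widehat{x}(X))|\le C|\nabla_X\varphi(X)|$. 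Changing variables via \eqref{eq31} (with $\sqrt{\mathcal{G}}\le\lambda_{\max}$) gives $\|\nabla_\Gamma\psi\|_{L^p(\Gamma_0)}\le C\|\nabla_X\varphi\|_{L^p(U)}$, and combining with the already-known bound \eqref{eq38} on the $L^p$-part yields \eqref{eq317}; density then extends the estimate to $W^{1,p}(\Gamma_0)$.

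For $(\mathrm{ii})$ we proceed the same way, starting from the pointwise version of Lemma~\ref{Lem38}(iii),
\[
\partial_i^\Gamma\partial_j^\Gamma\psi(\widehat{x}(X))=\mathfrak{g}^{\alpha'\beta'}\frac{\partial\widehat{x}_i}{\partial X_{\alpha'}}\frac{\partial}{\partial X_{\beta'}}\!\left(\mathfrak{g}^{\alpha\beta}\frac{\partial\widehat{x}_j}{\partial X_\alpha}\frac{\partial\varphi}{\partial X_\beta}\right).
\]
Expanding the inner $\partial_{\beta'}$ produces a linear combination of $\partial_\alpha\partial_\beta\varphi$ and $\partial_\beta\varphi$ whose coefficients depend only on $\widehat{x}$ up to second order; because $\widehat{x}\in C^2(\overline U)$, all such coefficients are uniformly bounded. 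Hence $|\nabla_\Gamma^2\psi(\widehat{x}(X))|\le C(|\nabla_X^2\varphi(X)|+|\nabla_X\varphi(X)|)$ pointwise, and another application of \eqref{eq31} together with \eqref{eq317} and \eqref{eq38} delivers \eqref{eq318} (first for $C^2(\overline{\Gamma_0})$ functions, then on all of $W^{2,p}(\Gamma_0)$ by Lemma~\ref{Lem36}(iv)).

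For $(\mathrm{iii})$, specialize to $p=2$ and start from the identity $|\nabla_\Gamma\psi|^2(\widehat{x}(X))=\mathfrak{g}^{\alpha\beta}(\partial_\alpha\varphi)(\partial_\beta\varphi)$, which follows by squaring and summing $\partial_j^\Gamma\psi=\mathfrak{g}^{\alpha\beta}(\partial_\alpha\widehat{x}_j)(\partial_\beta\varphi)$ and contracting $\sum_j\partial_\alpha\widehat{x}_j\partial_{\alpha'}\widehat{x}_j=\mathfrak{g}_{\alpha\alpha'}$ against $\mathfrak{g}^{\alpha\beta}\mathfrak{g}^{\alpha'\beta'}$. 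The upper bound on $\mathfrak{g}^{\alpha\beta}$ gives the right-hand inequality in \eqref{eq319}, while Assumption~\ref{ass24} gives the left-hand one; integrating against $\sqrt{\mathcal{G}}\,dX$ using the two-sided bound $\lambda_{\min}\le\sqrt{\mathcal{G}}\le\lambda_{\max}$ yields \eqref{eq319}. Finally, \eqref{eq320} is obtained by adding the Hausdorff-measure identity \eqref{eq38} (with $p=2$) to \eqref{eq319}. The only non-routine point in this entire argument is keeping track of the pointwise metric identity for $|\nabla_\Gamma\psi|^2$, but this is already implicit in Lemma~\ref{Lem31}(v); once it is in hand, the rest is a direct change of variables with a uniformly elliptic, uniformly bounded metric.
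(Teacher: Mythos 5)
Your argument is correct and is exactly the route the paper intends: the paper leaves the proof of Lemma \ref{Lem39} to the reader but states that it follows from Lemma \ref{Lem31}, Remark \ref{Rem35} (which already records the estimates \eqref{eq312}--\eqref{eq315} for smooth functions), and the density results of Lemma \ref{Lem36}, which is precisely your combination of pointwise metric bounds, the change-of-variables formula \eqref{eq31}, and a density passage to Sobolev functions.
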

The proofs of Lemmas \ref{Lem38} and \ref{Lem39} are left for the readers. Next we study the formula for the integration by parts on the surface $\Gamma_0$.

\begin{lemma}[Formula for the integration by parts]\label{Lem310}
Let $1 < p,p' < \infty$ such that $1/p + 1/{p'}=1$. Then for each $j=1,2,3$, $f \in W^{1,p} ( \Gamma_0)$, and $\psi \in W^{1,p'} ( \Gamma_0)$,
\begin{equation}\label{eq321}
\int_{\Gamma_0} f ( \partial_j^\Gamma \psi ) { \ }d \mathcal{H}^2_x = - \int_{\Gamma_0} (\partial_j^\Gamma f + H_\Gamma n_j f ) \psi { \ }d \mathcal{H}^2_x + \int_{\partial \Gamma_0} \nu_j (\gamma_p f) (\gamma_{p'} \psi) { \ }d \mathcal{H}^1_x.
\end{equation}
Here $\gamma_p: W^{1,p} (\Gamma_0) \to L^p (\partial \Gamma_0)$ is the trace operator defined by Definition \ref{Def37}.
\end{lemma}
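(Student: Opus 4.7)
The plan is to prove the identity first for smooth functions via the surface divergence theorem (Lemma \ref{Lem32}), and then extend to $W^{1,p}(\Gamma_0) \times W^{1,p'}(\Gamma_0)$ by a density/limit argument using Lemma \ref{Lem36}(iii) and the continuity of the trace operator of Definition \ref{Def37}.

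First I would fix $j \in \{1,2,3\}$ and take $f, \psi \in C^1(\overline{\Gamma_0})$. Let $e_j$ be the $j$-th standard basis vector of $\mathbb{R}^3$, and apply Lemma \ref{Lem32} to the vector field $\mathbf{f} = f\psi\, e_j \in [C^1(\mathbb{R}^3)]^3$ (or its smooth extension). Since $\mathrm{div}_\Gamma \mathbf{f} = \partial_j^\Gamma(f\psi) = (\partial_j^\Gamma f)\psi + f(\partial_j^\Gamma \psi)$ (which follows from the Leibniz rule for $\partial_j^\Gamma$, using that $\partial_j^\Gamma$ is a first-order tangential derivative), and since $n \cdot \mathbf{f} = n_j f\psi$, $\nu \cdot \mathbf{f} = \nu_j f\psi$, the surface divergence theorem yields
\begin{equation*}
\int_{\Gamma_0} \bigl[(\partial_j^\Gamma f)\psi + f(\partial_j^\Gamma \psi)\bigr]\, d\mathcal{H}^2_x = -\int_{\Gamma_0} H_\Gamma n_j f\psi\, d\mathcal{H}^2_x + \int_{\partial \Gamma_0} \nu_j f\psi\, d\mathcal{H}^1_x,
\end{equation*}
which is \eqref{eq321} for smooth $f, \psi$ (with $\gamma_p f = f|_{\partial\Gamma_0}$ and similarly for $\psi$).

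Next I would extend to general $f \in W^{1,p}(\Gamma_0)$ and $\psi \in W^{1,p'}(\Gamma_0)$. By Lemma \ref{Lem36}(iii), pick sequences $\{f_k\}, \{\psi_k\} \subset C^1(\overline{\Gamma_0})$ with $f_k \to f$ in $W^{1,p}(\Gamma_0)$ and $\psi_k \to \psi$ in $W^{1,p'}(\Gamma_0)$. Writing \eqref{eq321} for each pair $(f_k, \psi_k)$, I would pass to the limit termwise. The two volume integrals converge by Hölder's inequality \eqref{eq39}: for the left-hand term, $f_k(\partial_j^\Gamma \psi_k) \to f(\partial_j^\Gamma \psi)$ in $L^1(\Gamma_0)$ because $\|f_k - f\|_{L^p}\|\partial_j^\Gamma \psi_k\|_{L^{p'}}$ and $\|f\|_{L^p}\|\partial_j^\Gamma(\psi_k - \psi)\|_{L^{p'}}$ both tend to zero; similarly for the term involving $(\partial_j^\Gamma f_k)\psi_k$. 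For the curvature term, note that $H_\Gamma, n_j \in L^\infty(\Gamma_0)$ because $\widehat{x} \in [C^2(\overline{U})]^3$ and $\sqrt{\mathcal{G}} \geq \lambda_{\min}$, so $H_\Gamma n_j f_k \psi_k \to H_\Gamma n_j f \psi$ in $L^1(\Gamma_0)$ by the same Hölder argument.

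The one place requiring a little care — and the main (though mild) obstacle — is the boundary integral. I would use the continuity of the trace operator established in Definition \ref{Def37}: the estimate $\|\gamma_p \psi\|_{L^p(\partial\Gamma_0)} \leq C\|\widehat{\gamma}_p \varphi\|_{L^p(\partial U)} \leq C'\|\varphi\|_{W^{1,p}(U)} \leq C''\|\psi\|_{W^{1,p}(\Gamma_0)}$ (and analogously in $p'$) shows $\gamma_p f_k \to \gamma_p f$ in $L^p(\partial \Gamma_0)$ and $\gamma_{p'}\psi_k \to \gamma_{p'}\psi$ in $L^{p'}(\partial \Gamma_0)$. Since $\nu_j \in L^\infty(\partial \Gamma_0)$, Hölder on $\partial \Gamma_0$ then gives
\begin{equation*}
\int_{\partial \Gamma_0} \nu_j f_k \psi_k\, d\mathcal{H}^1_x \longrightarrow \int_{\partial \Gamma_0} \nu_j (\gamma_p f)(\gamma_{p'}\psi)\, d\mathcal{H}^1_x,
\end{equation*}
completing the passage to the limit and yielding \eqref{eq321} in full generality.
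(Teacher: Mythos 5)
Your proposal is correct and follows essentially the same route as the paper: apply the surface divergence theorem (Lemma \ref{Lem32}) to the product $f\psi\,e_j$ for $C^1$ approximants supplied by Lemma \ref{Lem36}, then pass to the limit, with the boundary term handled via H\"older's inequality and the continuity of the trace operators of Definition \ref{Def37}. The paper's proof is merely terser about the smooth case and the interior-term convergence, concentrating on the boundary-integral limit, which you treat equivalently.
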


\begin{proof}[Proof of Lemma \ref{Lem310}]
We only derive \eqref{eq321}. Fix $j=1,2,3$. Let $1 < p,p' < \infty$ such that $1/p + 1/{p'}=1$. Fix $f \in W^{1,p} ( \Gamma_0)$ and $\psi \in W^{1,p'} ( \Gamma_0)$. By Lemma \ref{Lem36}, we find that there are $\phi, \phi_m \in W^{1,p}(U)$, $\varphi, \varphi_k \in W^{1,p'}(U)$ such that $f = \breve{\phi}$, $\psi = \breve{\varphi}$,
\begin{align}
& \lim_{m \to \infty} \| \phi - \phi_m \|_{W^{1,p} (U)} = 0,\label{eq322}\\
& \lim_{k \to \infty} \| \varphi - \varphi_k \|_{W^{1,p'} (U)} = 0.\label{eq323} 
\end{align}
Applying Lemma \ref{Lem32} with \eqref{eq317}, \eqref{eq322}, and \eqref{eq323}, we have
\begin{multline*}
\int_{\Gamma_0} f ( \partial_j^\Gamma \psi ) { \ }d \mathcal{H}^2_x = - \int_{\Gamma_0} (\partial_j^\Gamma f + H_\Gamma n_j f ) \psi { \ }d \mathcal{H}^2_x\\  + \lim_{m \to \infty} \lim_{k \to \infty} \int_{\partial \Gamma_0} \nu_j (\breve{\phi}_m|_{\partial \Gamma_0}) (\breve{\varphi}_k|_{\partial \Gamma_0}) { \ }d \mathcal{H}^1_x.
\end{multline*}
Now we prove that
\begin{equation}\label{eq324}
\lim_{m \to \infty} \lim_{k \to \infty} \int_{\partial \Gamma_0} \nu_j (\breve{\phi}_m|_{\partial \Gamma_0}) (\breve{\varphi}_k|_{\partial \Gamma_0}) { \ }d \mathcal{H}^1_x = \int_{\partial \Gamma_0} \nu_j (\gamma_p f) (\gamma_{p'} \psi) { \ }d \mathcal{H}^1_x.
\end{equation}
To this end, we first show that 
\begin{equation}\label{eq325}
\lim_{m \to \infty} \lim_{k \to \infty} \int_U (\phi_m|_{\partial U}) (\varphi_k|_{\partial U}) { \ }d \mathcal{H}^1_X = \int_{\partial U} (\widehat{\gamma}_p \phi ) ( \widehat{\gamma}_{p'} \varphi ) { \ } d \mathcal{H}^1_X.
\end{equation}
Here $\widehat{\gamma}_p:W^{1,p} (U) \to L^p(\partial U)$ is the trace operator. Using the H$\ddot{\rm{o}}$lder inequality, we check that
\begin{multline*}
\left| \int_{\partial U} \{ (\widehat{\gamma}_p \phi ) ( \widehat{\gamma}_{p'} \varphi ) - (\phi_m|_{\partial U}) (\varphi_k|_{\partial U}) \} { \ } d \mathcal{H}^1_X\right|\\
 = \left| \int_{\partial U} (\widehat{\gamma}_p \phi - \phi_m|_{\partial U} ) ( \widehat{\gamma}_{p'} \varphi ) { \ } d \mathcal{H}^1_X + \int_{\partial U} \phi_m|_{\partial U} ( \widehat{\gamma}_{p'} \varphi - \varphi_k|_{\partial U} ) { \ } d \mathcal{H}^1_X \right|\\
\leq \| \widehat{\gamma}_p \phi - \phi_m|_{\partial U} \|_{L^p (\partial U)} \| \widehat{\gamma}_{p'} \varphi \|_{L^{p'}( \partial U)} + \| \phi_m|_{\partial U} \|_{L^p (\partial U)} \| \varphi_k|_{\partial U} - \widehat{\gamma}_{p'} \varphi \|_{L^{p'}( \partial U)}\\
\leq \| \phi - \phi_m \|_{W^{1,p}(U)} \| \varphi \|_{W^{1,p'} (U)} +  \| \phi_m \|_{W^{1,p}(U)} \| \varphi_k - \varphi \|_{W^{1,p'} (U)}.
\end{multline*} 
Using \eqref{eq322} and \eqref{eq323}, we have \eqref{eq325}. Since $\nu \cdot \nu =1$ and
\begin{equation*}
\int_{\partial \Gamma_0} \nu_j (\breve{\phi}_m|_{\partial \Gamma_0}) (\breve{\varphi}_k|_{\partial \Gamma_0}) { \ }d \mathcal{H}^1_x = \int_{\partial U} \widehat{\nu}_j (\phi_m|_{\partial U}) (\varphi_k|_{\partial U}) | n_1^U \mathfrak{g}_2 - n_2^U \mathfrak{g}_1 | { \ }d \mathcal{H}^1_X,
\end{equation*}
we apply \eqref{eq325} to have \eqref{eq324}. Therefore, the lemma follows.
\end{proof}

Let us derive the key lemma to show the existence of strong solutions to system \eqref{eq11}.
\begin{lemma}\label{Lem311}
Let $1 < p < \infty$, and let $v \in W^{1,p} (\Gamma_0)$. Then there exists $u \in W^{2,p} (\Gamma_0)$ such that
\begin{equation*}
\gamma_p v = \gamma_p u.
\end{equation*}
Here $\gamma_p: W^{1,p} (\Gamma_0) \to L^p (\partial \Gamma_0)$ is the trace operator defined by Definition \ref{Def37}.
\end{lemma}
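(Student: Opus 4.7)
The plan is to pull the problem back to the planar parameter domain $U\subset\mathbb{R}^2$, construct a $W^{2,p}(U)$-representative whose boundary trace on $\partial U$ agrees in $L^p(\partial U)$ with that of $v$, and then push the result forward by $\breve{\,\cdot\,}$. Concretely, write $v=\breve\varphi$ with $\varphi\in W^{1,p}(U)$; Definition~\ref{Def37} already identifies $\gamma_p v$ with the ordinary Sobolev trace $\widehat\gamma_p\varphi\in L^p(\partial U)$ weighted by the positive Jacobian factor $|n_1^U\mathfrak{g}_2-n_2^U\mathfrak{g}_1|$. It therefore suffices to produce $\widetilde\varphi\in W^{2,p}(U)$ with $\widehat\gamma_p\widetilde\varphi=\widehat\gamma_p\varphi$ as elements of $L^p(\partial U)$, after which setting $u:=\breve{\widetilde\varphi}$ and invoking Lemma~\ref{Lem39}(ii) will give $u\in W^{2,p}(\Gamma_0)$.

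For the flat-domain construction, I would solve the auxiliary Dirichlet problem
\begin{equation*}
-\Delta\widetilde\varphi+\widetilde\varphi=0\ \text{in }U,\qquad \widetilde\varphi|_{\partial U}=\varphi|_{\partial U}.
\end{equation*}
Because $\partial U$ is $C^2$, the $L^p$-theory for second-order elliptic operators (Agmon--Douglis--Nirenberg, or Gilbarg--Trudinger Chapter~9), combined with a right inverse of the Sobolev trace operator, yields such a $\widetilde\varphi\in W^{2,p}(U)$ realising the prescribed boundary datum in $L^p$. With $\widetilde\varphi$ in hand, the change-of-variables identity built into Definition~\ref{Def37} gives
\begin{equation*}
\|\gamma_p u-\gamma_p v\|_{L^p(\partial\Gamma_0)}^p
=\int_{\partial U}\bigl|\widehat\gamma_p\widetilde\varphi-\widehat\gamma_p\varphi\bigr|^p\,|n_1^U\mathfrak{g}_2-n_2^U\mathfrak{g}_1|\,d\mathcal{H}_X^1=0,
\end{equation*}
which is the desired identity.

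The central difficulty lies precisely in the flat-domain step: the trace of a generic $W^{1,p}(U)$-function sits in the Slobodeckij space $W^{1-1/p,p}(\partial U)$, whereas the trace of a $W^{2,p}(U)$-function naturally lies in the strictly smaller space $W^{2-1/p,p}(\partial U)$. Producing a $W^{2,p}$-representative whose $L^p$-trace coincides with a merely $W^{1-1/p,p}$ datum is therefore the main technical obstacle, and is exactly where the $C^2$-regularity of $\partial U$ and a careful application of the trace-inversion theorem (equivalently, Dirichlet lifting for the Laplacian on $U$) do all of the essential work.
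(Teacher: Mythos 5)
Your route is the same as the paper's: pull back to $U$, solve a Dirichlet problem with boundary datum $\widehat{\gamma}_p\varphi$, claim the solution lies in $W^{2,p}(U)$, and push forward. The comparison of the two traces via the weight $|n_1^U\mathfrak{g}_2-n_2^U\mathfrak{g}_1|$ is also exactly what Definition \ref{Def37} provides, so that part is fine. The problem is that the one step you yourself single out as ``the central difficulty'' is never actually carried out, and the tools you invoke cannot carry it out. The trace operator on $W^{2,p}(U)$ has image precisely $W^{2-1/p,p}(\partial U)$ (Besov space $B^{2-1/p}_{p,p}$), and the trace-inversion theorem is a right inverse defined only on that space; it does not extend a datum that is merely in $W^{1-1/p,p}(\partial U)$ to a $W^{2,p}(U)$ function. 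Likewise, elliptic $L^p$-regularity for $-\Delta\widetilde\varphi+\widetilde\varphi=0$ with $\widetilde\varphi|_{\partial U}=\varphi|_{\partial U}$ yields $\widetilde\varphi\in W^{2,p}(U)$ only when the boundary datum already lies in $W^{2-1/p,p}(\partial U)$ (equivalently, is the trace of some $W^{2,p}$ function); for a datum in $W^{1-1/p,p}(\partial U)$ the solution is only in $W^{1,p}(U)$ (indeed $W^{1+1/p'-\varepsilon}$-type regularity at best). The $C^2$-smoothness of $\partial U$ is irrelevant here: the obstruction concerns the regularity of the datum, not of the domain. For a generic $\varphi\in W^{1,p}(U)$ the trace $\widehat\gamma_p\varphi$ does \emph{not} belong to $W^{2-1/p,p}(\partial U)$, so no $\widetilde\varphi\in W^{2,p}(U)$ with the required $L^p$-trace exists by these means.

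So the proposal has a genuine gap at its load-bearing step. For what it is worth, the paper's own proof makes the identical unjustified assertion (``applying the Green function \dots\ we see that \eqref{eq326} admits a unique strong solution $\mathcal{J}\in W^{2,p}(U)$''), so you have reproduced the paper's argument, gap included. To make the lemma usable one must either restrict it to $v$ whose boundary trace is known to lie in $W^{2-1/p,p}(\partial U)$ (which is the situation in which it is actually applied, since there $v$ already enjoys extra regularity from the equation), or replace the exact identity $\gamma_p u=\gamma_p v$ by an approximation argument; as a statement about arbitrary $v\in W^{1,p}(\Gamma_0)$ the construction you give does not go through.
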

\begin{proof}[Proof of Lemma \ref{Lem311}]
Fix $v \in W^{1,p} (\Gamma_0)$. From the assertion $(\rm{iii})$ of Lemma \ref{Lem36}, there are $v_m \in C^1 (\overline{\Gamma_0})$, $\varphi \in W^{1,p}(U)$, and $\varphi_m \in C^1(\overline{U})$ such that $v = \breve{\varphi}$, $v_m = \breve{\varphi}_m$,
\begin{align*}
\lim_{m \to \infty} \| v - v_m \|_{W^{1,p} (\Gamma_0)} =0,\\
\lim_{m \to \infty} \| \varphi - \varphi_m \|_{W^{1,p} (U)} =0.
\end{align*}
Let $\gamma_p$ and $\widehat{\gamma}_p$ be the two trace operators such as $\gamma_p : W^{1,p} (\Gamma_0) \to L^p (\partial \Gamma_0)$ and $\widehat{\gamma}_p : W^{1,p} (U) \to L^p (\partial U)$. Now we consider
\begin{equation}\label{eq326}
\begin{cases}
- \Delta_X \mathcal{J} = 0 \text{ in } U,\\
\mathcal{J} |_{\partial U} = \widehat{\gamma}_p \varphi.
\end{cases}
\end{equation}
Applying the Green function for the Laplacian $- \Delta_X$ in the domain $U$, we see that system \eqref{eq326} admits a unique strong solution $\mathcal{J} \in W^{2,p} (U)$. Since $\mathcal{J} \in W^{2,p} (U)$, there are $\mathcal{J}_m \in C^2 ( \overline{U})$ such that
\begin{equation*}
\lim_{m \to \infty} \| \mathcal{J} - \mathcal{J}_m \|_{W^{2,p} (U)} = 0.
\end{equation*}
Set $u = \breve{\mathcal{J}}$ and $u_m = \breve{\mathcal{J}}_m$. Then we see that $u \in W^{2,p}( \Gamma_0 )$ and that
\begin{equation*}
\lim_{m \to \infty} \| u - u_m \|_{W^{2,p} ( \Gamma_0 )} = 0.
\end{equation*}

Next we show that $\gamma_p u = \gamma_p v$. Since
\begin{align*}
\| \widehat{\gamma}_p \varphi_m - \widehat{\gamma}_p \mathcal{J}_m \|_{L^p (\partial U)} & \leq C \| \widehat{\gamma}_p \varphi_m - \widehat{\gamma}_p \varphi \|_{L^p (\partial U)} + C \| \widehat{\gamma}_p \varphi - \widehat{\gamma}_p \mathcal{J}_m \|_{L^p (\partial U)}\\
& \to 0 \text{ as } m \to \infty,
\end{align*}
we check that
\begin{align*}
\| \gamma_p v - \gamma_p u \|_{L^p (\partial \Gamma_0)}^p & = \lim_{m \to \infty} \int_{\partial U} |\varphi_m|_{\partial U} - \mathcal{J}_m |_{\partial U} |^p |n_1^U \mathfrak{g}_2 - n_2^U \mathfrak{g}_1 | { \ }d \mathcal{H}_X^1\\
& \leq C \lim_{m \to \infty} \int_{\partial U} |\varphi_m|_{\partial U} - \mathcal{J}_m |_{\partial U} |^p { \ }d \mathcal{H}_X^1\\
& \leq C \lim_{m \to \infty} \| \widehat{\gamma}_p \varphi_m - \widehat{\gamma}_p \mathcal{J}_m \|_{L^p (\partial U)} = 0.
\end{align*}
Therefore, we conclude that $\gamma_p u = \gamma_p v$, and the lemma follows.
\end{proof}

Applying Remark \ref{Rem33} and Lemma \ref{Lem39}, we have
\begin{lemma}\label{Lem312}{ \ }\\
$(\mathrm{i})$ Let $1 \leq p < \infty$. Then $L^p(\Gamma_0)$ is a Banach space.\\
$(\mathrm{ii})$ Each function space $L^2(\Gamma_0)$, $W_0^{1,2}(\Gamma_0)$, and $W^{1,2}(\Gamma_0)$ is a Hilbert space.
\end{lemma}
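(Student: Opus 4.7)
The plan is to prove both parts by transporting the standard Banach/Hilbert space structure from $L^p(U)$ and $W^{1,2}(U)$ to $\Gamma_0$ via the parametrization $\widehat{x}$, exploiting the two-sided norm equivalences recorded in Remark \ref{Rem33} and Lemma \ref{Lem39}.

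For assertion $(\mathrm{i})$, I would first observe that $L^p(\Gamma_0)$ is a normed space because $\| \cdot \|_{L^p(\Gamma_0)}$ inherits homogeneity, the triangle inequality, and positive-definiteness directly from the Lebesgue integral (combined with the bijectivity of $\widehat{x}: \overline{U} \to \overline{\Gamma_0}$ so that $\| \psi \|_{L^p(\Gamma_0)} = 0$ forces $\breve{\varphi} = 0$ a.e., hence $\psi = 0$ a.e.). To establish completeness, take a Cauchy sequence $\{ \psi_k \}$ in $L^p (\Gamma_0)$. By definition each $\psi_k = \breve{\varphi}_k$ for some $\varphi_k \in L^p (U)$, and the lower bound in \eqref{eq38} gives
\begin{equation*}
\lambda_{min}^{1/p} \| \varphi_m - \varphi_k \|_{L^p(U)} \leq \| \psi_m - \psi_k \|_{L^p (\Gamma_0)},
\end{equation*}
so $\{ \varphi_k \}$ is Cauchy in the Banach space $L^p(U)$ and thus converges to some $\varphi \in L^p(U)$. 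Set $\psi := \breve{\varphi}$; the upper bound in \eqref{eq38} then yields $\psi_k \to \psi$ in $L^p (\Gamma_0)$, and $\psi \in L^p (\Gamma_0)$ by construction.

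For assertion $(\mathrm{ii})$, the case $p=2$ of the above already makes $L^2(\Gamma_0)$ a Banach space, and the pairing $\dual{\cdot , \cdot}$ from \eqref{eq37} is sesquilinear, symmetric, and positive-definite with $\dual{\psi,\psi} = \| \psi \|_{L^2(\Gamma_0)}^2$, so $L^2 (\Gamma_0)$ is a Hilbert space. For $W^{1,2}(\Gamma_0)$, I would run the same parametric argument with the two-sided equivalence \eqref{eq320}: any Cauchy sequence $\{ \psi_k \} = \{ \breve{\varphi}_k \}$ in $W^{1,2} (\Gamma_0)$ pulls back to a Cauchy sequence $\{ \varphi_k \}$ in the Hilbert space $W^{1,2}(U)$, its limit $\varphi$ gives $\psi := \breve{\varphi} \in W^{1,2}(\Gamma_0)$, and the other direction of \eqref{eq320} transports the convergence back. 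The inner product $\dual{\cdot , \cdot}_{W^{1,2}}$ defined in Section \ref{subsec32} is clearly bilinear and symmetric, and its associated norm equals $\| \cdot \|_{W^{1,2}(\Gamma_0)}$ by Lemma \ref{Lem38}$(\mathrm{ii})$; positive-definiteness follows from that of $\| \cdot \|_{L^2(\Gamma_0)}$, so $W^{1,2}(\Gamma_0)$ is Hilbert.

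Finally, for $W_0^{1,2}(\Gamma_0)$ it suffices to show it is a closed subspace of $W^{1,2}(\Gamma_0)$, since closed subspaces of Hilbert spaces are Hilbert. If $\psi_k = \breve{\varphi}_k \in W_0^{1,2}(\Gamma_0)$ converges in $W^{1,2}(\Gamma_0)$ to $\psi = \breve{\varphi}$, then by \eqref{eq320} $\varphi_k \to \varphi$ in $W^{1,2}(U)$ with $\varphi_k \in W_0^{1,2}(U)$; since $W_0^{1,2}(U)$ is closed in $W^{1,2}(U)$, $\varphi \in W_0^{1,2}(U)$ and hence $\psi \in W_0^{1,2}(\Gamma_0)$. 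No step presents a real obstacle: the only point that requires care is being consistent about which norm equivalence is invoked at each step, since the constants in \eqref{eq38}, \eqref{eq317}, and \eqref{eq320} are exactly what transfers completeness through the bijection $\widehat{x}$.
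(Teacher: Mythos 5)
Your proof is correct and follows exactly the route the paper intends: the lemma is stated as a consequence of Remark \ref{Rem33} and Lemma \ref{Lem39} (the proof itself being left to the reader), and your argument transfers completeness and the inner-product structure from $L^p(U)$, $W^{1,2}(U)$, and $W_0^{1,2}(U)$ through the two-sided norm equivalences \eqref{eq38} and \eqref{eq320}, which is precisely that strategy.
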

\noindent The proof of Lemma \ref{Lem312} is left for the readers.

Finally, we state the surface Rellich-Kondrakov theorem.
\begin{lemma}[Sufrace Rellich-Kondrakov theorem]\label{Lem313}
Assume that $\{ \psi_k \}_{k \in \mathbb{N}}$ is bounded in $W^{1,2} (\Gamma_0)$. Then there are $\psi \in L^2 (\Gamma_0)$ and a subsequence $\{ \psi_m \}_{m \in \mathbb{N}} \subset \{ \psi_k \}_{k \in \mathbb{N}}$ such that
\begin{equation}\label{eq327}
\lim_{m \to \infty } \| \psi_m - \psi \|_{L^2( \Gamma_0)} = 0.
\end{equation}
\end{lemma}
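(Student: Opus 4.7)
The plan is to pull everything back to the flat parameter domain $U \subset \mathbb{R}^2$ via the $C^2$-diffeomorphism $\widehat{x}: \overline{U} \to \overline{\Gamma_0}$, apply the classical Rellich--Kondrakov theorem there, and then push the resulting convergence back to $\Gamma_0$ using the norm equivalences already established in Lemma \ref{Lem39} and Remark \ref{Rem33}.

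First, for each $\psi_k \in W^{1,2}(\Gamma_0)$ I would invoke the definition of $W^{1,2}(\Gamma_0)$ to produce $\varphi_k \in W^{1,2}(U)$ with $\psi_k = \breve{\varphi}_k$. By hypothesis $\{\psi_k\}$ is bounded in $W^{1,2}(\Gamma_0)$, so the estimate \eqref{eq320} from Lemma \ref{Lem39} (iii) gives
\begin{equation*}
\| \varphi_k \|_{W^{1,2}(U)} \leq C \| \psi_k \|_{W^{1,2}(\Gamma_0)} \leq C',
\end{equation*}
so $\{\varphi_k\}$ is bounded in $W^{1,2}(U)$.

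Next, since $U \subset \mathbb{R}^2$ is a bounded domain with $C^2$-boundary (Definition \ref{def21}), the classical Rellich--Kondrakov theorem yields a subsequence $\{\varphi_m\} \subset \{\varphi_k\}$ and some $\varphi \in L^2(U)$ such that $\varphi_m \to \varphi$ in $L^2(U)$. I would then define $\psi := \breve{\varphi} \in L^2(\Gamma_0)$, which is well-defined by the right-hand inequality of \eqref{eq38} in Remark \ref{Rem33}. Applying that same inequality to $\psi_m - \psi = \breve{\varphi}_m - \breve{\varphi}$ yields
\begin{equation*}
\| \psi_m - \psi \|_{L^2(\Gamma_0)} \leq \lambda_{max}^{1/2} \| \varphi_m - \varphi \|_{L^2(U)} \longrightarrow 0,
\end{equation*}
which is exactly \eqref{eq327}.

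There is no real obstacle here: the argument is a direct transfer through the bi-Lipschitz-equivalent chart, and all the needed norm comparisons have been collected in Lemma \ref{Lem39} and Remark \ref{Rem33}. The only point worth stating carefully is that because $\widehat{x}$ is a single global $C^2$-chart with $\sqrt{\mathcal{G}} \in [\lambda_{min}, \lambda_{max}]$ bounded away from $0$ and $\infty$, no partition of unity or patching of local charts is required, and the classical Rellich--Kondrakov theorem applies on $U$ without modification.
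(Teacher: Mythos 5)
Your proof is correct and follows essentially the same route as the paper's: pull the bounded sequence back to $U$ via \eqref{eq320}, apply the classical Rellich--Kondrakov theorem on the bounded $C^2$-domain $U$, and transfer the $L^2$-convergence back to $\Gamma_0$ using the norm comparison \eqref{eq38}. The only difference is that you spell out the final estimate explicitly, which the paper leaves as "we check that"; no gap either way.
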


\begin{proof}[Proof of Lemma \ref{Lem313}]
Assume that $\{ \psi_k \}_{k \in \mathbb{N}}$ is bounded in $W^{1,2} (\Gamma_0)$. By definition and \eqref{eq320}, there is a sequence $\{ \varphi_k \}_{k \in \mathbb{N}} \subset W^{1,2}(U)$ such that $\psi_k = \breve{\varphi}_k$. Since
\begin{equation*}
\| \varphi_k \|_{W^{1,2} (U)} \leq C \| \psi_k \|_{W^{1,2} (\Gamma_0)} < +\infty,
\end{equation*}
we see that $\{ \varphi_k \}_{k \in \mathbb{N}}$ is bounded in $W^{1,2} (U)$. Since $\partial U$ is $C^2$-class, it follows from the Rellich-Kondrakov theorem to see that there are $\varphi \in W^{1,2} (U)$ and a subsequence $\{ \varphi_m \}_{m \in \mathbb{N}} \subset \{ \varphi_k \}_{k \in \mathbb{N}}$ such that
\begin{equation*}
\lim_{m \to \infty } \| \varphi_m - \varphi \|_{L^2(U)} = 0.
\end{equation*}
Set $\psi = \breve{\varphi}$ and $\psi_m = \breve{\varphi}_m$. It is clear that $\psi \in L^2 (\Gamma_0)$. We also check that $\{ \psi_m \}_{m \in \mathbb{N}}$ is a subsequence of $\{ \psi_k \}_{k \in \mathbb{N}}$ and that \eqref{eq327} holds. Therefore, the lemma follows.
\end{proof}

\subsection{Function spaces $L_0^p (\Gamma_0)$ and $H^1 (\Gamma_0)$}\label{subsec33}

We introduce and study new function spaces. For each $1 \leq p < \infty$,
\begin{equation*}
L_0^p (\Gamma_0) = \left\{ f \in L^p (\Gamma_0) ; { \ } \int_{\Gamma_0} f (y) { \ }d \mathcal{H}^2_y = 0 \right\}
\end{equation*}
with the norm $\| \cdot \|_{L_0^p (\Gamma_0)} := \| \cdot \|_{L^p (\Gamma_0)}$. Set
\begin{equation*}
H^1 (\Gamma_0) = L_0^2 (\Gamma_0 ) \cap W^{1,2} (\Gamma_0 ) = \left\{ f \in W^{1,2} (\Gamma_0) ; { \ } \int_{\Gamma_0} f (y) { \ }d \mathcal{H}^2_y = 0 \right\}
\end{equation*}
with the norm $\| \cdot \|_{H^1 (\Gamma_0)} := \| \cdot \|_{W^{1,2} (\Gamma_0)}$ and the inner product $\dual{\cdot , \cdot }_{H^1 (\Gamma_0)} := \dual{\cdot, \cdot} + \dual{\nabla_\Gamma \cdot, \nabla_\Gamma \cdot}$. 
\begin{lemma}\label{Lem314} Let $1 \leq p < \infty$. Then\\
$(\mathrm{i})$ The function space $L_0^p ( \Gamma_0 )$ is a Banach space.\\
$(\mathrm{ii})$ The function space $L_0^p ( \Gamma_0 ) \cap W^{1,p}(\Gamma_0 )$ is dense in $L_0^p ( \Gamma_0 )$.\\
$(\mathrm{iii})$ The function space $L^2_0 (\Gamma_0)$ is a Hilbert space.\\
$(\mathrm{iv})$ The function space $H^1 ( \Gamma_0)$ is a Hilbert space. Moreover, $H^1 (\Gamma_0)$ is dense in $L^2_0 (\Gamma_0 )$.
\end{lemma}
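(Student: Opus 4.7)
The plan is to exhibit $L_0^p(\Gamma_0)$ and $H^1(\Gamma_0)$ as kernels of a continuous linear functional on the complete ambient spaces $L^p(\Gamma_0)$ and $W^{1,2}(\Gamma_0)$ of Lemma \ref{Lem312}, and then to establish density via a zero-mean correction. For (i), the integration functional $\mathcal{I}(f) := \int_{\Gamma_0} f \, d\mathcal{H}_x^2$ is bounded on $L^p(\Gamma_0)$: since $\sqrt{\mathcal{G}}$ is bounded on the compact set $\overline{U}$ by Remark \ref{Rem33}, we have $\mathcal{H}^2(\Gamma_0) < \infty$, so the constant function $\mathbf{1}$ lies in $L^{p'}(\Gamma_0)$, and \eqref{eq39} gives $|\mathcal{I}(f)| \leq \|f\|_{L^p(\Gamma_0)}\,\mathcal{H}^2(\Gamma_0)^{1/p'}$. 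Hence $L_0^p(\Gamma_0) = \ker \mathcal{I}$ is a closed subspace of the Banach space $L^p(\Gamma_0)$, proving (i). Specializing to $p=2$ yields (iii), since a closed subspace of the Hilbert space $L^2(\Gamma_0)$ is itself Hilbert under $\dual{\cdot,\cdot}$. For the Hilbert assertion in (iv), the definition \eqref{eq310} shows $W^{1,2}(\Gamma_0) \hookrightarrow L^2(\Gamma_0)$ continuously, so $\mathcal{I}$ is continuous on $W^{1,2}(\Gamma_0)$, and $H^1(\Gamma_0) = \ker(\mathcal{I}|_{W^{1,2}})$ is closed in the Hilbert space $W^{1,2}(\Gamma_0)$.

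For the density statements in (ii) and (iv), I would apply the mean-correction. Given $f \in L_0^p(\Gamma_0)$, Lemma \ref{Lem36}(iii) supplies a sequence $f_k \in C^1(\overline{\Gamma_0}) \subset W^{1,p}(\Gamma_0)$ with $\|f_k - f\|_{W^{1,p}(\Gamma_0)} \to 0$, and in particular $f_k \to f$ in $L^p(\Gamma_0)$. These $f_k$ need not be mean-zero, so I would set
\[
c_k := \frac{1}{\mathcal{H}^2(\Gamma_0)} \int_{\Gamma_0} f_k \, d\mathcal{H}_x^2, \qquad \tilde{f}_k := f_k - c_k.
\]
The constant function $c_k$ belongs to $W^{1,p}(\Gamma_0)$ because its pullback under $\widehat{x}$ is a constant on the bounded domain $U$, hence lies in $W^{1,p}(U)$. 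Thus $\tilde{f}_k \in L_0^p(\Gamma_0) \cap W^{1,p}(\Gamma_0)$ by construction, and continuity of $\mathcal{I}$ forces $c_k \to \mathcal{I}(f)/\mathcal{H}^2(\Gamma_0) = 0$, giving
\[
\|\tilde{f}_k - f\|_{L^p(\Gamma_0)} \leq \|f_k - f\|_{L^p(\Gamma_0)} + |c_k|\,\mathcal{H}^2(\Gamma_0)^{1/p} \to 0.
\]
This proves (ii); the density assertion in (iv) is the special case $p=2$, where the corrected approximants automatically lie in $H^1(\Gamma_0)$.

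There is no deep obstacle. The argument is routine once one has (a) completeness of the ambient spaces, (b) finiteness of $\mathcal{H}^2(\Gamma_0)$ giving continuity of the mean functional, and (c) the $C^1(\overline{\Gamma_0})$-density in $W^{1,p}(\Gamma_0)$ from Lemma \ref{Lem36}(iii). The one minor point that must be checked explicitly is that constants are admissible in $W^{1,p}(\Gamma_0)$ under the definition via the parametrization $\widehat{x}$, but this is immediate because constant functions pull back to constants in $W^{1,p}(U)$.
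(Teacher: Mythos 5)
Your proof follows essentially the same route as the paper's: completeness of $L_0^p(\Gamma_0)$ via the H\"older/finite-measure bound (the paper verifies closedness by hand with a Cauchy sequence and the estimate \eqref{eq39}; you package the same estimate as boundedness of the functional $\mathcal{I}$ and quote that kernels of bounded functionals are closed, which is cleaner), and density via subtracting the mean of nicer approximants, exactly as in the paper's proof of (ii). One citation is off: for $f \in L_0^p(\Gamma_0)$ you cannot invoke Lemma \ref{Lem36}(iii) to produce $f_k \in C^1(\overline{\Gamma_0})$ with $\| f_k - f \|_{W^{1,p}(\Gamma_0)} \to 0$, since $f$ is merely an $L^p$-function and need not lie in $W^{1,p}(\Gamma_0)$. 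What you actually need, and all you use downstream, is $f_k \to f$ in $L^p(\Gamma_0)$ with $f_k$ in $C^1(\overline{\Gamma_0})$ or $C_0^1(\Gamma_0)$; this is the $L^p$-density statement (the paper uses density of $C_0^1(\Gamma_0)$ in $L^p(\Gamma_0)$, in the spirit of Lemma \ref{Lem36}(i)). With that citation corrected the argument is complete: boundedness of $\mathcal{I}$ from $\mathcal{H}^2(\Gamma_0) < \infty$, admissibility of constants in $W^{1,p}(\Gamma_0)$, and $c_k \to 0$ are exactly the points that need checking, and you check them.
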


\begin{proof}[Proof of Lemma \ref{Lem314}]
We first show $(\mathrm{i})$. Let $\mu_1 , \mu_2 \in \mathbb{R}$ and $f_1 , f_2 \in L_0^p (\Gamma_0)$. By the definition of $L_0^p (\Gamma_0)$, we find that
\begin{equation*}
\int_{\Gamma_0} \{ \mu_1 f_1 (y) + \mu_2 f_2(y) \} { \ } d \mathcal{H}^2_y = \mu_1 \int_{\Gamma_0} f_1 (y) { \ }d \mathcal{H}^2_y + \mu_2 \int_{\Gamma_0} f_2(y) { \ } d \mathcal{H}^2_y = 0.
\end{equation*}
Therefore we see that $\mu_1 f_1 + \mu_2 f_2 \in L_0^p ( \Gamma_0 )$. Let $\{ f_k \}_{k \in \mathbb{N}} \subset L_0^p ( \Gamma_0 )$ be a Cauchy sequence, that is,
\begin{equation*}
\lim_{k,k_* \to \infty} \| f_k - f_{k_* } \|_{L^p (\Gamma_0 ) } = 0.
\end{equation*}
Since $L^p ( \Gamma_0) $ is a Banach space, there exists $f_* \in L^p ( \Gamma_0)$ such that
\begin{equation}\label{eq328}
\lim_{k \to \infty} \| f_k - f_* \|_{L^p (\Gamma_0 ) } = 0.
\end{equation}
Now we prove that $f_* \in L_0^p ( \Gamma_0 )$. Using the surface H$\ddot{\rm{o}}$lder inequality \eqref{eq39} and \eqref{eq328}, we see that
\begin{align*}
\left| \int_{\Gamma_0} f_* (y) { \ } d \mathcal{H}^2_y \right| = & \left| \int_{\Gamma_0} \{ f_* (y) - f_k (y) \} { \ } d \mathcal{H}^2_y \right|\\
\leq & | \Gamma_0 |^{1 - \frac{1}{p}} \| f_* -f \|_{L^p ( \Gamma_0 )} \to 0 \text{ (as } k \to \infty) .
\end{align*}
This implies that
\begin{equation*}
\int_{\Gamma_0} f_* (y) { \ } d \mathcal{H}^2_y = 0.
\end{equation*}
Therefore, we see that $f_* \in L_0^p ( \Gamma_0 )$. Therefore, we conclude that $L_0^p ( \Gamma_0 )$ is a Banach space.

Next, we prove $(\mathrm{ii})$. Let $f \in L_0^p (\Gamma_0 )$. Since $C_0^1 ( \Gamma_0 )$ is dense in $L^p (\Gamma_0)$, there are $f_m \in C_0^1 (\Gamma_0 )$ such that
\begin{equation}\label{eq329}
\lim_{m \to \infty }\| f - f_m \|_{L^p ( \Gamma_0 )} = 0.
\end{equation}
Set
\begin{equation*}
\phi_m = f_m - \frac{1}{| \Gamma_0 | }\int_{\Gamma_0} f_m (y) { \ }d \mathcal{H}^2_y .
\end{equation*}
It is easy to check that $\phi_m \in W^{1,p} (\Gamma_0)$ and
\begin{equation*}
\int_{\Gamma_0} f (y) { \ } d \mathcal{H}^2_y = 0.
\end{equation*}
Since $\int_{\Gamma_0} f (y) { \ } d \mathcal{H}^2_y = 0$, we apply the surface H$\ddot{\rm{o}}$lder inequality and \eqref{eq329} to see that
\begin{align*}
\left| \int_{\Gamma_0} f_m (y) { \ }d \mathcal{H}^2_y \right| & = \left| \int_{\Gamma_0} f_m (y) - f (y) { \ }d \mathcal{H}^2_y \right|\\
& \leq | \Gamma_0|^{1 - \frac{1}{p}} \| f - f_m \|_{L^p (\Gamma_0)}\\
& \to 0 \text{ (as }m \to \infty).
\end{align*}
Thus, we have
\begin{equation*}
\lim_{m \to \infty } \left| \int_{\Gamma_0} f_m (y) { \ }d \mathcal{H}^2_y \right| = 0.
\end{equation*}
Using $(a + b)^p \leq 2^p a^p + 2^p b^p$, we check that
\begin{align*}
\| f - \phi_m \|_{L^p (\Gamma_0)}^p = \int_{\Gamma_0}\left| f(x) - f_m(x) + \frac{1}{|\Gamma_0 |} \int_{\Gamma_0} f_m (y) { \ }d \mathcal{H}^2_y \right|^p { \ }d \mathcal{H}^2_x\\
\leq 2^p \| f - f_m \|_{L^p (\Gamma_0)} + 2^p \int_{\Gamma_0} \left| \frac{1}{|\Gamma_0|} \int_{\Gamma_0} f_m (y) { \ }d \mathcal{H}^2_y \right|^p { \ }d \mathcal{H}^2_x\\
\to 0 \text{ (as } m \to \infty) .
\end{align*}
Therefore, the assertion $(\mathrm{ii})$ is proved. The proofs of the assertions $(\mathrm{iii})$ and $(\mathrm{iv})$ are left for the readers.
\end{proof}

Next, we introduce the surface Poincar\'{e} inequalities.
\begin{lemma}[Surface Poincar\'{e} inequalities]\label{Lem315}{ \ }\\
$(\mathrm{i})$ There is $C_\star = C_\star (\Gamma_0) >0$ such that for all $f \in H^1 (\Gamma_0)$
\begin{equation}\label{eq330}
\| f \|_{L^2 ( \Gamma_0 )} \leq C_\star \| \nabla_\Gamma f \|_{L^2 (\Gamma_0)} .
\end{equation}
$(\mathrm{ii})$ There is $C_\bigstar = C_\bigstar (\Gamma_0) >0$ such that for all $\mathfrak{f} \in W_0^{1,2} (\Gamma_0)$
\begin{equation}\label{eq331}
\| \mathfrak{f} \|_{L^2 ( \Gamma_0 )} \leq C_\bigstar \| \nabla_\Gamma \mathfrak{f} \|_{L^2 (\Gamma_0)} .
\end{equation}
\end{lemma}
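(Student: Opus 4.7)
The plan is to reduce both inequalities to their flat counterparts on the bounded $C^2$-domain $U$ by using the bilipschitz equivalences between norms on $\Gamma_0$ and on $U$ established in Lemma \ref{Lem39}, together with the classical Poincar\'e and Poincar\'e--Wirtinger inequalities on $U$.

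For (ii), the argument will be direct: given $\mathfrak{f} = \breve{\varphi} \in W_0^{1,2}(\Gamma_0)$ with $\varphi \in W_0^{1,2}(U)$, apply the standard Poincar\'e inequality on the bounded domain $U$ to obtain $\|\varphi\|_{L^2(U)} \leq C \|\nabla_X \varphi\|_{L^2(U)}$, and then transfer each side back to $\Gamma_0$ by the two-sided estimates \eqref{eq38} (with $p=2$) and \eqref{eq319}. The constant $C_\bigstar$ is then $C \cdot \lambda_{max}^{1/2} \cdot \lambda_{min}^{-1/2}$ up to the constants of Lemma \ref{Lem39}.

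For (i), the same transfer strategy is viable but a mismatch must be resolved. Writing $f = \breve{\varphi}$ with $\varphi \in W^{1,2}(U)$, the hypothesis $\int_{\Gamma_0} f \, d\mathcal{H}^2_y = 0$ becomes, via \eqref{eq31}, the weighted condition $\int_U \varphi \sqrt{\mathcal{G}}\, dX = 0$, whereas the classical Poincar\'e--Wirtinger inequality on $U$ asks for the unweighted mean $\bar{\varphi} := |U|^{-1}\int_U \varphi\, dX$ to vanish. The key step is to split $\varphi = (\varphi - \bar{\varphi}) + \bar{\varphi}$, apply Poincar\'e--Wirtinger to the first summand, and then use the weighted mean-zero identity
\begin{equation*}
\bar{\varphi} \int_U \sqrt{\mathcal{G}}\, dX = - \int_U (\varphi - \bar{\varphi}) \sqrt{\mathcal{G}}\, dX
\end{equation*}
combined with the Cauchy--Schwarz inequality (and $0 < \lambda_{min} \leq \sqrt{\mathcal{G}} \leq \lambda_{max}$) to bound $|\bar{\varphi}|$ by a constant times $\|\nabla_X \varphi\|_{L^2(U)}$. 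Adding the two estimates yields $\|\varphi\|_{L^2(U)} \leq C \|\nabla_X \varphi\|_{L^2(U)}$, and \eqref{eq38}, \eqref{eq319} finish the job.

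An equally natural alternative for (i) is a compactness--contradiction argument built on Lemma \ref{Lem313}: if \eqref{eq330} failed, one would get a sequence $f_k \in H^1(\Gamma_0)$ with $\|f_k\|_{L^2(\Gamma_0)} = 1$ and $\|\nabla_\Gamma f_k\|_{L^2(\Gamma_0)} \to 0$; boundedness in $W^{1,2}(\Gamma_0)$ plus Rellich--Kondrakov produces an $L^2$-limit $f$ with $\|f\|_{L^2} = 1$, whose pullback has vanishing weak gradient on the connected domain $U$, hence is a.e.\ constant, and the continuity of $g \mapsto \int_{\Gamma_0} g\, d\mathcal{H}^2$ on $L^2(\Gamma_0)$ forces that constant to be $0$, contradicting $\|f\|_{L^2} = 1$. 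The main obstacle in either route is precisely the weighted mean-zero mismatch in (i); the decomposition identity above provides a quantitative resolution, while the compactness route bypasses it at the cost of giving a non-explicit constant.
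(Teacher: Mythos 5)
Your proposal is correct. Part (ii) is exactly the paper's argument: pull back to $\varphi \in W_0^{1,2}(U)$, apply the classical Poincar\'e inequality on $U$, and transfer both sides via the norm equivalences of Lemma \ref{Lem39}. For part (i), however, your primary route differs genuinely from the paper's. The paper proves the stronger Poincar\'e--Wirtinger-type estimate \eqref{eq332} for all $f \in W^{1,2}(\Gamma_0)$ by contradiction: a normalized violating sequence is bounded in $W^{1,2}(\Gamma_0)$, the surface Rellich--Kondrakov theorem (Lemma \ref{Lem313}) extracts an $L^2$-limit of norm one with vanishing weak surface gradient and zero surface mean, which must be the zero constant --- a contradiction. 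This is precisely your ``alternative'' route, and it yields a non-explicit constant. Your primary route instead resolves the weighted mean-zero mismatch quantitatively: the identity $\bar{\varphi}\int_U \sqrt{\mathcal{G}}\,dX = -\int_U(\varphi-\bar{\varphi})\sqrt{\mathcal{G}}\,dX$ together with $0<\lambda_{min}\le\sqrt{\mathcal{G}}\le\lambda_{max}$ and Cauchy--Schwarz gives $|\bar{\varphi}| \le \lambda_{max}\lambda_{min}^{-1}|U|^{-1/2}\|\varphi-\bar{\varphi}\|_{L^2(U)}$, and the classical Poincar\'e--Wirtinger inequality on the connected $C^2$-domain $U$ closes the estimate with an explicit constant depending only on $\lambda_{min}$, $\lambda_{max}$, $|U|$, and the Poincar\'e constant of $U$. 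This buys a constructive constant and avoids compactness entirely, at the cost of invoking Poincar\'e--Wirtinger on $U$ rather than only the Rellich--Kondrakov theorem; both routes require $U$ to be connected (for ``vanishing gradient implies constant'' in the paper's proof, and for Poincar\'e--Wirtinger in yours), which holds since $U$ is a bounded domain.
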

\noindent See \cite{Mc70}, \cite[Chapter 2, Corollary 4.3]{Aub82}, \cite[\S 5]{Str83}, \cite[\S 3.3]{Heb96}, and \cite{RT15} for another type of surface Poincar\'e inequalities. By developing some ideas in \cite[\S 5.8]{Eva10}, we prove Lemma \ref{Lem315}.
\begin{proof}[Proof of Lemma \ref{Lem315}]
We first show $(\mathrm{i})$. To derive \eqref{eq330}, we show that there is $C_\Gamma >0$ such that for all $f \in W^{1,2} (\Gamma_0)$
\begin{equation}\label{eq332}
\left\| f - \frac{1}{|\Gamma_0|} \int_{\Gamma_0} f (y) { \ }d \mathcal{H}^2_y \right\|_{L^2 ( \Gamma_0 )} \leq C_\Gamma \| \nabla_\Gamma f \|_{L^2 (\Gamma_0)} .
\end{equation}
We argue by contradiction. Assume that \eqref{eq332} is not true. Then there are $\{ f_k \}_{k \in \mathbb{N}} \subset W^{1,2} ( \Gamma_0)$ such that
\begin{equation}\label{eq333}
\left\| f_k - \frac{1}{|\Gamma_0|} \int_{\Gamma_0} f_k (y) { \ }d \mathcal{H}^2_y \right\|_{L^2 ( \Gamma_0 )} > k \| \nabla_\Gamma f_k \|_{L^2 (\Gamma_0)} .
\end{equation}
Set
\begin{equation*}
\psi_k = \frac{f_k - \frac{1}{|\Gamma_0|} \int_{\Gamma_0} f_k (y) { \ }d \mathcal{H}^2_y }{\left\| f_k - \frac{1}{|\Gamma_0|} \int_{\Gamma_0} f_k (y) { \ }d \mathcal{H}^2_y \right\|_{L^2 ( \Gamma_0 )}}.
\end{equation*}
It is clear that
\begin{align}
\| \psi_k \|_{L^2 (\Gamma_0)} = 1,\label{eq334}\\
\int_{\Gamma_0} \psi_k (y) { \ }d \mathcal{H}_y^2 = 0.\label{eq335} 
\end{align}
By \eqref{eq333} and \eqref{eq334}, we see that
\begin{equation}\label{eq336}
\| \nabla_k \psi_k \|_{L^2 (\Gamma_0)} < \frac{1}{k}.
\end{equation}
From \eqref{eq334} and \eqref{eq336}, we find that $\{ \psi_k \}_{k \in \mathbb{N}}$ is bounded in $W^{1,2} (\Gamma_0)$. From the Rellich-Kondrakov theorem, there are $\psi \in L^2 (\Gamma_0)$ and $\{ \psi_m \}_{m \in \mathbb{N}} \subset \{ \psi_k \}_{k \in \mathbb{N}} $ such that
\begin{equation}\label{eq337}
\lim_{m \to \infty} \| \psi_m - \psi \|_{L^2 ( \Gamma_0 )} = 0 .
\end{equation}
Using \eqref{eq337}, \eqref{eq334}, and \eqref{eq335}, we check that
\begin{align}
\| \psi \|_{L^2 ( \Gamma_0 )} = 1,\label{eq338}\\
\int_{\Gamma_0} \psi (y) { \ }d \mathcal{H}^2_y = 0.\label{eq339}
\end{align}
Now we show that $\psi \in W^{1,2} (\Gamma_0)$. Fix $\phi \in C_0^1 (\Gamma_0)$. Using \eqref{eq336}, we see that for each $j = 1,2,3$,
\begin{align*}
- \int_{\Gamma_0} \psi(\partial_j^\Gamma \phi + H_\Gamma n_j \phi ) { \ }d \mathcal{H}^2_x & = - \int_{\Gamma_0} \psi_m (\partial_j^\Gamma \phi + H_\Gamma n_j \phi ) { \ }d \mathcal{H}^2_x\\
& = \int_{\Gamma_0} (\partial_j^\Gamma \psi_m ) \phi { \ }d \mathcal{H}^2_x\\
& \to 0 { \ }(\text{as } m \to \infty).
\end{align*}
Since $\phi$ is arbitrary, we see that $\psi \in W^{1,2} (\Gamma_0)$ and $\| \nabla_\Gamma \psi \|_{L^2 (\Gamma_0)} = 0$. Since
\begin{equation*}
0 = \| \nabla_\Gamma \psi \|_{L^2 (\Gamma_0)} \geq C \| \nabla_X \widehat{\psi} \|_{L^2 (U)},
\end{equation*}
we find that $\psi$ is a constant. From \eqref{eq339}, we see that $\psi =0$, which contradicts the condition \eqref{eq338}. Therefore, we see $(\mathrm{i})$.

Next we show $(\mathrm{ii})$. Let $\mathfrak{f} \in W_0^{1,2} (\Gamma_0)$. By definition, there is $\varphi \in W_0^{1,2} (U)$ such that $\mathfrak{f} = \breve{\varphi}$. Using the Poincar\'{e} inequality for $W_0^{1,2} (U)$-functions with \eqref{eq320}, we check that
\begin{align*}
\| \mathfrak{f} \|_{L^2 (\Gamma_0)} & \leq C \| \varphi \|_{L^2 (U)}\\
& \leq C(U) \| \nabla_X \varphi \|_{L^2 (U)}\\
& \leq C \| \nabla_\Gamma \mathfrak{f} \|_{L^2 (\Gamma_0)}.
\end{align*}
Therefore, we have \eqref{eq331}.
\end{proof}

\section{Existence of weak solutions}\label{sect4}
Let us show the existence of a unique weak solution to system \eqref{eq11} when $F \in L_0^2 (\Gamma_0)$. We apply the Lax-Milgram theorem and some properties of the function space $H^1 ( \Gamma_0 )$ to prove Proposition \ref{prop28}. We first prepare the following two lemmas.
\begin{lemma}\label{lem41}
Let $F \in L_0^2 (\Gamma_0)$. Then there exists a unique function $v \in H^1 ( \Gamma_0 )$ such that for all $\psi \in H^1 (\Gamma_0)$
\begin{equation}\label{eq41}
\dual{\nabla_\Gamma v , \nabla_\Gamma \psi } = \dual{F , \psi}.
\end{equation}
\end{lemma}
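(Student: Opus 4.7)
The plan is to apply the Lax-Milgram theorem on the Hilbert space $H^1(\Gamma_0)$, with the bilinear form $a(v,\psi) := \dual{\nabla_\Gamma v, \nabla_\Gamma \psi}$ and the linear functional $L(\psi) := \dual{F, \psi}$. Lemma \ref{Lem314}(iv) tells us that $H^1(\Gamma_0)$ is indeed a Hilbert space, so the abstract setup is in place.

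First I would verify that $a$ is bounded on $H^1(\Gamma_0) \times H^1(\Gamma_0)$: the Cauchy--Schwarz inequality on $L^2(\Gamma_0)$ (Remark \ref{Rem33}) immediately yields
\begin{equation*}
|a(v,\psi)| \leq \| \nabla_\Gamma v \|_{L^2(\Gamma_0)} \| \nabla_\Gamma \psi \|_{L^2(\Gamma_0)} \leq \| v \|_{W^{1,2}(\Gamma_0)} \| \psi \|_{W^{1,2}(\Gamma_0)}.
\end{equation*}
Boundedness of $L$ follows similarly from $|L(\psi)| \leq \| F \|_{L^2(\Gamma_0)} \| \psi \|_{W^{1,2}(\Gamma_0)}$.

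The key step, and the only nontrivial one, is the coercivity of $a$ on $H^1(\Gamma_0)$. Here I would invoke the surface Poincar\'e inequality in Lemma \ref{Lem315}(i): for any $v \in H^1(\Gamma_0)$ we have $\| v \|_{L^2(\Gamma_0)} \leq C_\star \| \nabla_\Gamma v \|_{L^2(\Gamma_0)}$, so
\begin{equation*}
a(v,v) = \| \nabla_\Gamma v \|_{L^2(\Gamma_0)}^2 \geq \frac{1}{1 + C_\star^2} \| v \|_{W^{1,2}(\Gamma_0)}^2 = \frac{1}{1 + C_\star^2} \| v \|_{H^1(\Gamma_0)}^2.
\end{equation*}
This is the step that genuinely uses that we have restricted to mean-zero functions; it would fail on all of $W^{1,2}(\Gamma_0)$ because constants lie in the kernel of $\nabla_\Gamma$.

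With boundedness, coercivity, and the Hilbert space structure established, the Lax-Milgram theorem yields a unique $v \in H^1(\Gamma_0)$ with $a(v, \psi) = L(\psi)$ for every $\psi \in H^1(\Gamma_0)$, which is exactly \eqref{eq41}. The anticipated obstacle is essentially bookkeeping --- making sure we test only against $H^1(\Gamma_0)$ rather than all of $W^{1,2}(\Gamma_0)$ so that coercivity can be used, deferring the extension to arbitrary $W^{1,2}$ test functions to a subsequent step (likely handled by splitting a general $\psi \in W^{1,2}(\Gamma_0)$ into its mean and its mean-zero part, and using $F \in L_0^2(\Gamma_0)$ to kill the constant piece).
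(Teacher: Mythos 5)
Your proof is correct and uses the same core mechanism as the paper: Lax--Milgram on the Hilbert space $H^1(\Gamma_0)$ with coercivity supplied by the surface Poincar\'e inequality of Lemma \ref{Lem315}$(\mathrm{i})$. The only difference is that the paper first treats the case $F_* \in H^1(\Gamma_0)$ and then passes to general $F \in L_0^2(\Gamma_0)$ by a density and limiting argument (via Lemma \ref{Lem314}), whereas you apply Lax--Milgram directly to $L(\psi) = \dual{F,\psi}$, which is already a bounded functional on $H^1(\Gamma_0)$ for any $F \in L^2(\Gamma_0)$; your route is more direct and the paper's extra approximation step is not actually needed. Your closing remark about extending to all test functions $\psi \in W^{1,2}(\Gamma_0)$ by splitting off the mean correctly anticipates the content of Lemma \ref{lem42}, which is indeed handled separately.
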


\begin{lemma}\label{lem42}
Let $F \in L_0^2 (\Gamma_0)$ and $v \in H^1 (\Gamma_0)$. Assume that \eqref{eq41} holds for all $\psi \in H^1 ( \Gamma_0 )$. Then \eqref{eq41} holds for all $\psi \in W^{1,2} ( \Gamma_0 )$.
\end{lemma}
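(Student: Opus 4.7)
The plan is to reduce the case of a general test function $\psi \in W^{1,2}(\Gamma_0)$ to the case of a zero-mean test function by subtracting off its average over $\Gamma_0$, exploiting the fact that constants have vanishing surface gradient and that $F$ itself has zero mean.

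More precisely, given $\psi \in W^{1,2}(\Gamma_0)$, I would set
\begin{equation*}
\widetilde{\psi} := \psi - \frac{1}{|\Gamma_0|} \int_{\Gamma_0} \psi(y) \, d\mathcal{H}_y^2.
\end{equation*}
Since $\Gamma_0$ is bounded and $\psi \in L^2(\Gamma_0) \subset L^1(\Gamma_0)$ (by the surface H\"older inequality \eqref{eq39}), the constant correction is well defined. The function $\widetilde\psi$ differs from $\psi$ only by a constant, hence $\widetilde\psi \in W^{1,2}(\Gamma_0)$ with $\nabla_\Gamma \widetilde\psi = \nabla_\Gamma \psi$ (constants lie in the kernel of each $\partial_j^\Gamma$, which follows directly from the definition of $\partial_j^\Gamma$ or from the representation in Lemma \ref{Lem38}). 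By construction $\int_{\Gamma_0} \widetilde\psi \, d\mathcal{H}_y^2 = 0$, so $\widetilde\psi \in H^1(\Gamma_0)$.

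Next I would apply the hypothesis \eqref{eq41} with test function $\widetilde\psi \in H^1(\Gamma_0)$ to obtain
\begin{equation*}
\dual{\nabla_\Gamma v, \nabla_\Gamma \psi} = \dual{\nabla_\Gamma v, \nabla_\Gamma \widetilde\psi} = \dual{F, \widetilde\psi}.
\end{equation*}
Finally I would expand
\begin{equation*}
\dual{F, \widetilde\psi} = \dual{F, \psi} - \frac{1}{|\Gamma_0|}\left( \int_{\Gamma_0} \psi \, d\mathcal{H}^2 \right) \int_{\Gamma_0} F \, d\mathcal{H}^2,
\end{equation*}
and invoke $F \in L_0^2(\Gamma_0)$, i.e. $\int_{\Gamma_0} F \, d\mathcal{H}^2 = 0$, to conclude $\dual{F, \widetilde\psi} = \dual{F, \psi}$. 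Combining gives \eqref{eq41} for the arbitrary $\psi \in W^{1,2}(\Gamma_0)$.

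There is essentially no substantive obstacle here; the argument is structural and uses only that constants are admissible elements of $W^{1,2}(\Gamma_0)$ killed by $\nabla_\Gamma$, together with the zero-mean condition on $F$. The only minor item worth being careful about is confirming that subtracting a constant really preserves membership in $W^{1,2}(\Gamma_0)$ and annihilates the surface gradient in the sense of Definition \ref{Def34}, which is immediate from the weak-derivative formula since the boundary contribution $\int_{\Gamma_0} c(\partial_j^\Gamma \phi + H_\Gamma n_j \phi)\,d\mathcal{H}^2$ equals $c\int_{\Gamma_0} \partial_j^\Gamma 1 \, d\mathcal{H}^2 = 0$ via Lemma \ref{Lem31}.
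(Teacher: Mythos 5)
Your proposal is correct and is essentially identical to the paper's own proof: the paper likewise splits $\psi$ into its zero-mean part $\psi_1 \in H^1(\Gamma_0)$ and its average $\psi_2 \in \mathbb{R}$, uses $\nabla_\Gamma \psi_2 = 0$ to apply the hypothesis to $\psi_1$, and uses $F \in L_0^2(\Gamma_0)$ to replace $\dual{F,\psi_1}$ by $\dual{F,\psi}$. No gaps.
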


Let us first attack Lemma \ref{lem41}.
\begin{proof}[Proof of Lemma \ref{lem41}]
We first consider the case when $F_* \in H^1 (\Gamma_0)$. For all $\psi \in H^1 (\Gamma_0)$, set
\begin{equation*}
F_* (\psi) = \dual{F_* , \psi }.
\end{equation*}
It is clear that $F_*: H^1 (\Gamma_0) \to \mathbb{R}$ is a bounded linear functional on $H^1 (\Gamma_0)$. For all $v , \psi \in H^1 ( \Gamma_0)$,
\begin{equation*}
\mathcal{B} ( v , \psi ) := \dual{\nabla_\Gamma v , \nabla_\Gamma \psi}.
\end{equation*}
It is easy to check that $\mathcal{B} (\cdot , \cdot )$ is a bilinear mapping on $H^1(\Gamma_0)$, and that for all $v , \psi \in H^1 (\Gamma_0)$
\begin{equation*}
| \mathcal{B} ( v , \psi ) | \leq \| v \|_{W^{1,2} (\Gamma_0)} \| \psi \|_{W^{1,2} (\Gamma_0)}.
\end{equation*}
Using the surface Poincar\'{e} inequality \eqref{eq330}, we observe that for all $v \in H^1(\Gamma_0 )$
\begin{equation*}
| \mathcal{B} ( v , v ) | \geq \frac{1}{C_\star^2 + 1} \| v \|_{W^{1,2} (\Gamma_0)}^2,
\end{equation*}
where $C_\star$ is the positive constant appearing in \eqref{eq330}.

Since $H^1 ( \Gamma_0)$ is a Hilbert space, it follows from the Lax-Milgram theorem to see that there exists a unique function $v \in H^1 (\Gamma_0)$ such that for all $\psi \in H^1 (\Gamma_0)$
\begin{equation*}
F_* ( \psi ) =\dual{F_* , \psi } = \dual{ \nabla_\Gamma v , \nabla_\Gamma \psi }. 
\end{equation*}

Let $F \in L_0^2 (\Gamma_0)$. From Lemma \ref{Lem314}, there is $\{ F_m \}_{m \in \mathbb{N}} \subset H^1 (\Gamma_0 )$ such that
\begin{equation}\label{eq42}
\lim_{m \to \infty} \| F - F_m \|_{L^2 (\Gamma_0 )} = 0.
\end{equation} 
By the previous argument, we find that for each $F_m$ there exists a unique function $v_m \in H^1 (\Gamma_0)$ such that for all $\psi \in H^1 (\Gamma_0 )$
\begin{equation*}
\dual{F_m , \psi } = \dual{\nabla_\Gamma v_m , \nabla_\Gamma \psi }.
\end{equation*}
Since $\partial_\Gamma^j$ is a closed operator and $v_m \in H^1 (\Gamma_0 )$, we use \eqref{eq42} to see that there exists a unique function $v \in H^1 (\Gamma_0)$ such that for all $\psi \in H^1 (\Gamma_0)$
\begin{equation*}
\dual{F , \psi } = \dual{\nabla_\Gamma v , \nabla_\Gamma \psi }.
\end{equation*}
Therefore, the lemma follows.
\end{proof}

Next, we prove Lemma \ref{lem42}.
\begin{proof}[Proof of Lemma \ref{lem42}]
Let $\psi \in W^{1,2} (\Gamma_0)$. Set
\begin{align*}
\psi_1 &:= \psi - \frac{1}{ | \Gamma_0 |} \int_{\Gamma_0} \psi (y) { \ }d \mathcal{H}^2_y,\\
\psi_2 & := \frac{1}{ | \Gamma_0 |} \int_{\Gamma_0} \psi (y) { \ }d \mathcal{H}^2_y.
\end{align*}
It is clear that $\psi_1 \in H^1 (\Gamma_0)$ and $\psi_2 \in \mathbb{R}$. By assumption, we check that
\begin{align*}
\dual{\nabla_\Gamma v , \nabla_\Gamma \psi } & = \dual{\nabla_\Gamma v , \nabla_\Gamma (\psi_1 + \psi_2) }\\
& = \dual{\nabla_\Gamma v , \nabla_\Gamma \psi_1 } = \dual{F , \psi_1}.
\end{align*}
Since $F \in L_0^2 ( \Gamma_0)$ and $\psi_2 \in \mathbb{R}$, we find that
\begin{align*}
\dual{F , \psi } & = \dual{F , \psi_1 + \psi_2 }\\
& = \dual{F , \psi_1}.
\end{align*}
As a result, we have
\begin{equation}\label{eq43}
\dual{\nabla_\Gamma v , \nabla_\Gamma \psi } = \dual{F , \psi }.  
\end{equation}
Since $\psi$ is arbitrary, we conclude that \eqref{eq43} holds for all $\psi \in W^{1,2} (\Gamma_0)$. Therefore, the lemma follows.
\end{proof}

Finally, we prove Proposition \ref{prop28}.
\begin{proof}[Proof of Proposition \ref{prop28}]
Let $F \in L_0^2 (\Gamma_0)$. From Lemmas \ref{lem41} and \ref{lem42}, we see the existence of a weak solution $v \in H^1 (\Gamma_0)$ to system \eqref{eq11}. Now we discuss the uniqueness of the weak solutions to \eqref{eq11}. Let $v_* \in H^1 (\Gamma_0)$. Assume that
\begin{equation*}
\dual{\nabla_\Gamma v_* , \nabla_\Gamma \psi } = \dual{F , \psi }
\end{equation*}
holds for all $\psi \in W^{1,2} (\Gamma_0)$. Set $v_\sharp = v - v_*$. Then
\begin{equation*}
\dual{\nabla_\Gamma v_\sharp , \nabla_\Gamma \psi } = 0
\end{equation*}
holds for all $\psi \in W^{1,2} (\Gamma_0)$. Now we take $\psi = v_\sharp$. Then
\begin{equation*}
\| \nabla_\Gamma v_\sharp \|_{L^2 (\Gamma_0)} = 0.
\end{equation*}
Since $v_\sharp \in H^1 (\Gamma_0)$, we use the surface Poincar\'{e} inequality \eqref{eq330} to see that
\begin{equation*}
\| v_\sharp \|_{L^2 (\Gamma_0)} = 0.
\end{equation*}
This implies that $v_\sharp = 0$, that is, $v_* = v$. Therefore, Proposition \ref{prop28} is proved.
\end{proof}

\section{Regularity for weak solutions to the surface Poisson equation}\label{sect5}
Let us study the regularity for weak solutions to the surface Poisson equation. Applying a standard regularity theory for elliptic equations, we investigate the regularity for our weak solutions. However, we need some techniques to derive \eqref{EQ515} since the surface $\Gamma_0$ has the boudary $\partial \Gamma_0$. This section provides a detailed proof of Poposition \ref{prop29}. Let us first prepare the two lemmas.
\begin{lemma}\label{lem51}
Fix $P = { }^t (P_1 , P_2) \in \partial U$. Then there are $\delta >0$, $\mathfrak{b} \in C^2 (\mathbb{R})$, and $\Phi \in C^2 (B_\delta (P) \cap U )$ such that
\begin{align*}
& \Phi : B_\delta (P) \cap U \to Q_+^{\delta , \mathfrak{b}},\\
& \Phi^{-1} : Q_+^{\delta , \mathfrak{b}} \to B_\delta (P),\\
& \Phi^{-1} \in C^2 (Q_+^{\delta , \mathfrak{b}}),
\end{align*}
where $\Phi^{-1}$ is the inverse function of $\Phi$,
\begin{align*}
B_\delta (P) & = \{ X = { }^t (X_1 , X_2) \in \mathbb{R}^2; { \ } (X_1 - P_1)^2 + (X_2 - P_2)^2 < \delta^2 \},\\ 
Q_+^{\delta, \mathfrak{b}} & = \{ Y = { }^t (Y_1 , Y_2 ) \in \mathbb{R}^2; { \ } -\delta < Y_1 < \delta ,{ \ } 0 < Y_2 < \delta - \mathfrak{b} (Y_1) \}.
\end{align*}
Moreover, $\Phi$ and $\Phi^{-1}$ satisfy
\begin{align}
& \rm{det} (\nabla_X \Phi (X)) = 1 \text{ for all } X \in B_\delta (P) \cap U,\label{EQ51}\\
& \rm{det} (\nabla_Y \Phi^{-1} (Y)) = 1\text{ for all } Y \in Q_+^{\delta , \mathfrak{b}} .\label{EQ52}
\end{align}
\end{lemma}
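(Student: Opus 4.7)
The plan is to build $\Phi$ as the composition of a rigid motion (which brings the boundary into a standard position) with a ``graph straightening'' shear, and to rely on the fact that both factors preserve Euclidean area in order to obtain \eqref{EQ51}--\eqref{EQ52} automatically, without any extra adjustment.

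Because $\partial U$ is of class $C^2$, the implicit function theorem produces, after composing with the translation $T(X):=X-P$ and a rotation $R\in SO(2)$ chosen so that $R\,n^U(P)=-e_2$, a radius $\delta_0>0$ and a function $\mathfrak{b}\in C^2(\mathbb{R})$ (obtained from the local graph representation and extended to all of $\mathbb{R}$ by a standard $C^2$ cut-off) with $\mathfrak{b}(0)=\mathfrak{b}'(0)=0$, such that $(R\circ T)(\partial U)$ is locally the graph $X_2=\mathfrak{b}(X_1)$ and the image of $U$ lies in $\{X_2>\mathfrak{b}(X_1)\}$. I would then set $\Psi(X_1,X_2):=(X_1,\,X_2-\mathfrak{b}(X_1))$ and define $\Phi:=\Psi\circ R\circ T$. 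The Jacobian of $\Psi$ is lower triangular with $1$'s on the diagonal, so $\det\nabla\Psi\equiv 1$, and the rigid motion $R\circ T$ has Jacobian in $SO(2)$, hence unit determinant; the chain rule immediately yields \eqref{EQ51}, and the same computation applied to $\Phi^{-1}=T^{-1}\circ R^{-1}\circ\Psi^{-1}$ with $\Psi^{-1}(Y_1,Y_2)=(Y_1,Y_2+\mathfrak{b}(Y_1))$ gives \eqref{EQ52}. The $C^2$-regularity of both $\Phi$ and $\Phi^{-1}$ is inherited from $\mathfrak{b}\in C^2(\mathbb{R})$ and the smoothness of the rigid motion.

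The remaining task is to choose $\delta\in(0,\delta_0]$ small enough that $\Phi$ sends $B_\delta(P)\cap U$ into the model region $Q_+^{\delta,\mathfrak{b}}$ and that $\Phi^{-1}$ sends $Q_+^{\delta,\mathfrak{b}}$ into $B_\delta(P)$. Here one exploits $\mathfrak{b}(0)=\mathfrak{b}'(0)=0$ to estimate $|\mathfrak{b}(Y_1)|=O(\delta^2)$ for $|Y_1|<\delta$, which allows one to compare the curvilinear image $\Phi(B_\delta(P)\cap U)$ with the half-strip $\{(Y_1,Y_2):|Y_1|<\delta,\,0<Y_2<\delta-\mathfrak{b}(Y_1)\}$ by a direct inspection of the four boundary pieces (the flattened face $Y_2=0$, the lateral sides $Y_1=\pm\delta$, and the ceiling $Y_2=\delta-\mathfrak{b}(Y_1)$). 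I expect the main obstacle to be bookkeeping, namely selecting $\delta$ uniformly so that the image of the half-ball $B_\delta(P)\cap U$ under $\Phi$ actually fits $Q_+^{\delta,\mathfrak{b}}$, and keeping the rotation $R$ inside $SO(2)$ rather than $O(2)\setminus SO(2)$ so that the determinant ends up as $+1$ and not $-1$. No analytic difficulty arises, because the unimodularity \eqref{EQ51}--\eqref{EQ52} is built into the construction through the triangular shape of the shear and the orthogonality of the rotation.
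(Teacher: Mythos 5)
Your proposal is correct and follows essentially the same route as the paper: the paper likewise writes $\Phi$ as a rotation--translation composed with the shear $Z\mapsto(Z_1,Z_2-\mathfrak{b}(Z_1))$ and obtains \eqref{EQ51}--\eqref{EQ52} by direct computation of the (triangular/orthogonal) Jacobians. The only divergence is the bookkeeping point you flag yourself: the paper sidesteps the question of whether the image is exactly $Q_+^{\delta,\mathfrak{b}}$ by describing the image region with a curved ceiling $\sqrt{\delta^2-Y_1^2}-\mathfrak{b}(Y_1)$ in its proof, so your plan to shrink $\delta$ and check the boundary pieces is, if anything, slightly more careful.
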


\begin{lemma}\label{lem52}
Let $\mathfrak{b} \in C^2 ( \mathbb{R})$, $P_1 , P_2 , \theta \in \mathbb{R}$. For each $Y = { }^t (Y_1,Y_2) \in \mathbb{R}^2$ and $\alpha, \beta \in \{1 , 2\}$, 
\begin{equation*}
\Psi = \Psi (Y) = \begin{pmatrix} \Psi_1\\ \Psi_2 \end{pmatrix} := \begin{pmatrix}
Y_1 \cos \theta + Y_2 \sin \theta + \mathfrak{b} (Y_1) \sin \theta + P_1 \\
- Y_1 \sin \theta + Y_2 \cos \theta + \mathfrak{b} (Y_1) \cos \theta + P_2
\end{pmatrix},
\end{equation*}
\begin{equation*}
\widetilde{x} = \widetilde{x} (Y)=
\begin{pmatrix}
\widetilde{x}_1\\
\widetilde{x}_2\\
\widetilde{x}_3
\end{pmatrix}
:= \widehat{x} (\Psi (Y)),
\end{equation*}

\begin{align*}
\mathring{g}_\alpha & := \mathring{g}_\alpha (Y) = \frac{\partial \Psi}{\partial Y_\alpha},{ \ }\mathring{g}_{\alpha \beta} := \mathring{g}_\alpha \cdot \mathring{g}_\beta,{ \ }\widetilde{g}_\alpha := \widetilde{g}_\alpha (Y) = \frac{\partial \widetilde{x}}{\partial Y_\alpha}, { \ }\widetilde{g}_{\alpha \beta} := \widetilde{g}_\alpha \cdot \widetilde{g}_\beta,\\
\widetilde{\mathfrak{g}}_{\alpha \beta} & =\widetilde{\mathfrak{g}}_{\alpha \beta}(Y) := \mathfrak{g}_{\alpha \beta} (\Psi (Y)) ,{ \ }\widetilde{\mathfrak{g}}^{\alpha \beta} =\widetilde{\mathfrak{g}}^{\alpha \beta}(Y) := \mathfrak{g}^{\alpha \beta} (\Psi (Y)).
\end{align*}
Set
\begin{align*}
\mathring{ \mathcal{G}} := \mathring{g}_{11} \mathring{g}_{22} - \mathring{g}_{12} \mathring{g}_{21},{ \ }( \mathring{g}^{\alpha \beta} )_{2 \times 2} := ( \mathring{g}_{ \alpha \beta } )_{2 \times 2}^{-1} = \frac{1}{ \mathring{ \mathcal{G} }}
\begin{pmatrix}
\mathring{g}_{22} & - \mathring{g}_{12}\\
- \mathring{g}_{12} & \mathring{g}_{11}
\end{pmatrix},\\
\widetilde{ \mathcal{G}} := \widetilde{g}_{11} \widetilde{g}_{22} - \widetilde{g}_{12} \widetilde{g}_{21},{ \ }( \widetilde{g}^{\alpha \beta} )_{2 \times 2} := ( \widetilde{g}_{ \alpha \beta } )_{2 \times 2}^{-1} = \frac{1}{ \widetilde{ \mathcal{G} }}
\begin{pmatrix}
\widetilde{g}_{22} & - \widetilde{g}_{12}\\
- \widetilde{g}_{12} & \widetilde{g}_{11}
\end{pmatrix}.
\end{align*}
Then
\begin{align*}
\mathring{g}_1 = \begin{pmatrix}
\cos \theta + \mathfrak{b}' (Y_1) \sin \theta \\
- \sin \theta + \mathfrak{b}' (Y_1) \cos \theta
\end{pmatrix},{ \ }
\mathring{g}_2 = \begin{pmatrix}
\sin \theta \\
\cos \theta
\end{pmatrix},{ \ }
\mathring{g}_{11} = 1 + (\mathfrak{b}')^2 ,{ \ }\mathring{g}_{22} = 1,\\
{ \ }\mathring{g}_{12} =\mathring{g}_{21} = \mathfrak{b}',{ \ }\mathring{\mathcal{G}} = 1,{ \ }\mathring{g}^{11} = 1 ,{ \ }\mathring{g}^{22} = 1 + (\mathfrak{b}')^2,{ \ }\mathring{g}^{12} = \mathring{g}^{21} = - \mathfrak{b}',
\end{align*}
\begin{align*}
\widetilde{g}_{11} & =  (\cos \theta + \mathfrak{b}'\sin \theta)^2 \tilde{\mathfrak{g}}_{11} + (- \sin \theta + \mathfrak{b}' \cos \theta)^2 \tilde{\mathfrak{g}}_{22} + C_\sharp \tilde{\mathfrak{g}}_{12},\\
\widetilde{g}_{22} & = \sin^2 \theta \tilde{\mathfrak{g}}_{11} + \cos^2 \theta \tilde{\mathfrak{g}}_{22} + 2 \sin \theta \cos \theta \tilde{\mathfrak{g}}_{12},\\
\widetilde{g}_{12} & =  ( \cos \theta \sin \theta + \mathfrak{b}' \sin^2 \theta) \tilde{\mathfrak{g}}_{11} + ( - \sin \theta \cos \theta + \mathfrak{b}' \cos^2 \theta ) \tilde{\mathfrak{g}}_{22} + C_{\sharp \sharp} \tilde{\mathfrak{g}}_{12},\\
\widetilde{g}^{22} & = (- \sin \theta + \mathfrak{b}' \cos \theta)^2 \tilde{\mathfrak{g}}^{11} + (\cos \theta + \mathfrak{b}'\sin \theta)^2 \tilde{\mathfrak{g}}^{22} - C_\sharp \tilde{\mathfrak{g}}^{12},\\
\widetilde{g}^{11} & =  \cos^2 \theta \tilde{\mathfrak{g}}^{11} + \sin^2 \theta \tilde{\mathfrak{g}}^{22} - 2 \sin \theta \cos \theta \tilde{\mathfrak{g}}^{12},\\
\widetilde{g}^{12} & = ( - \sin \theta \cos \theta + \mathfrak{b}' \cos^2 \theta ) \tilde{\mathfrak{g}}^{11} + ( \cos \theta \sin \theta + \mathfrak{b}' \sin^2 \theta) \tilde{\mathfrak{g}}^{22} - C_{\sharp \sharp} \tilde{\mathfrak{g}}^{12},
\end{align*}
$\widetilde{g}_{21} = \widetilde{g}_{12}$, $\widetilde{g}^{21} = \widetilde{g}^{12}$, and $\widetilde{\mathcal{G}} = \tilde{\mathfrak{g}}_{11} \tilde{\mathfrak{g}}_{22} - \tilde{\mathfrak{g}}_{12} \tilde{\mathfrak{g}}_{21}$, where $C_\sharp := 2(\cos \theta + \mathfrak{b}'\sin \theta)(- \sin \theta + \mathfrak{b}'  \cos \theta)$ and $C_{\sharp \sharp} := \cos^2 \theta - \sin^2 \theta + 2\mathfrak{b}' \sin \theta \cos \theta$. Moreover, for all $\xi = { }^t ( \xi_1 , \xi_2 ) \in \mathbb{R}^2$
\begin{equation}\label{EQ53}
\mathring{g}^{\alpha \beta} \xi_\alpha \xi_\beta \geq C_{\mathfrak{b}} | \xi |^2,
\end{equation}
where $C_{\mathfrak{b}} : = \min \{ 1/\{1 +2 (\mathfrak{b}')^2\}, 1/2\}$.

\end{lemma}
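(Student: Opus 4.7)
The plan is to verify each formula by direct computation from the definitions, proceeding in the order they are stated.

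First, for $\mathring{g}_\alpha$ and $(\mathring{g}^{\alpha\beta})$, I would differentiate $\Psi$ componentwise to read off $\mathring{g}_\alpha=\partial\Psi/\partial Y_\alpha$, then compute the Euclidean inner products $\mathring{g}_{\alpha\beta}=\mathring{g}_\alpha\cdot\mathring{g}_\beta$; the cross terms $\pm 2\mathfrak{b}'\sin\theta\cos\theta$ cancel in $\mathring{g}_{11}$, and an analogous cancellation occurs in $\mathring{g}_{12}$, yielding the stated simple formulas. The determinant $\mathring{\mathcal{G}}=(1+(\mathfrak{b}')^2)-(\mathfrak{b}')^2=1$ is immediate (consistent with \eqref{EQ51}), and $(\mathring{g}^{\alpha\beta})$ then follows from the standard $2\times 2$ adjugate formula.

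Next, for $\widetilde{g}_{\alpha\beta}$ I would apply the chain rule to $\widetilde{x}=\widehat{x}\circ\Psi$ to obtain
\begin{equation*}
\widetilde{g}_\alpha=\sum_{\gamma=1}^{2}\frac{\partial\Psi_\gamma}{\partial Y_\alpha}\,\widetilde{\mathfrak{g}}_\gamma,\qquad\text{where }\widetilde{\mathfrak{g}}_\gamma(Y):=\mathfrak{g}_\gamma(\Psi(Y)).
\end{equation*}
Taking Euclidean inner products in $\mathbb{R}^3$ and expanding bilinearly produces the claimed formulas, with the constants $C_\sharp$ and $C_{\sharp\sharp}$ emerging from the mixed cross terms. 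Setting $A:=\nabla_Y\Psi$, the matrix identity $(\widetilde{g}_{\alpha\beta})=A^{\top}(\widetilde{\mathfrak{g}}_{\alpha\beta})A$ together with $\det A=1$ (consistent with \eqref{EQ51}) gives $\widetilde{\mathcal{G}}=\det(\widetilde{\mathfrak{g}}_{\alpha\beta})=\widetilde{\mathfrak{g}}_{11}\widetilde{\mathfrak{g}}_{22}-\widetilde{\mathfrak{g}}_{12}\widetilde{\mathfrak{g}}_{21}$.

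For the inverse metric $(\widetilde{g}^{\alpha\beta})$, I would invoke the $2\times 2$ cofactor formula $\widetilde{g}^{11}=\widetilde{g}_{22}/\widetilde{\mathcal{G}}$, $\widetilde{g}^{22}=\widetilde{g}_{11}/\widetilde{\mathcal{G}}$, $\widetilde{g}^{12}=-\widetilde{g}_{12}/\widetilde{\mathcal{G}}$; substituting the previously derived expressions for $\widetilde{g}_{\alpha\beta}$ and rewriting the resulting ratios $\widetilde{\mathfrak{g}}_{22}/\widetilde{\mathcal{G}}$, $\widetilde{\mathfrak{g}}_{11}/\widetilde{\mathcal{G}}$, $-\widetilde{\mathfrak{g}}_{12}/\widetilde{\mathcal{G}}$ as $\widetilde{\mathfrak{g}}^{11}$, $\widetilde{\mathfrak{g}}^{22}$, $\widetilde{\mathfrak{g}}^{12}$ produces the stated identities; the sign flips $C_\sharp\mapsto-C_\sharp$ and $C_{\sharp\sharp}\mapsto-C_{\sharp\sharp}$ come from this substitution. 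I expect this bookkeeping to be the main calculational burden, but it is entirely mechanical.

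Finally, for the coercivity estimate \eqref{EQ53}, I would first observe the clean identity
\begin{equation*}
\mathring{g}^{\alpha\beta}\xi_\alpha\xi_\beta=\xi_1^2-2\mathfrak{b}'\xi_1\xi_2+(1+(\mathfrak{b}')^2)\xi_2^2=(\xi_1-\mathfrak{b}'\xi_2)^2+\xi_2^2,
\end{equation*}
which already gives strict positivity away from the origin. To extract the explicit constant $C_\mathfrak{b}$, I would rewrite the desired inequality as the nonnegativity of the quadratic form $(1-C_\mathfrak{b})\xi_1^2-2\mathfrak{b}'\xi_1\xi_2+(1+(\mathfrak{b}')^2-C_\mathfrak{b})\xi_2^2$; by the discriminant condition for a nonnegative symmetric $2\times 2$ form, this reduces to the algebraic inequality $C_\mathfrak{b}^2-(2+(\mathfrak{b}')^2)C_\mathfrak{b}+1\geq 0$. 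I would then verify this in the two regimes: when $(\mathfrak{b}')^2\leq 1/2$ the choice $C_\mathfrak{b}=1/2$ yields $1/4-(\mathfrak{b}')^2/2\geq 0$, and when $(\mathfrak{b}')^2>1/2$ the choice $C_\mathfrak{b}=1/(1+2(\mathfrak{b}')^2)$ yields $(\mathfrak{b}')^2(2(\mathfrak{b}')^2-1)/(1+2(\mathfrak{b}')^2)^2\geq 0$, completing the argument.
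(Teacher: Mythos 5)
Your proposal is correct. Note that the paper's own proof of this lemma consists solely of the derivation of \eqref{EQ53} (``We only derive \eqref{EQ53}''); all the metric formulas are left as direct computations, and your chain-rule outline $\widetilde{g}_\alpha=\sum_\gamma(\partial\Psi_\gamma/\partial Y_\alpha)\,\mathfrak{g}_\gamma(\Psi(Y))$, followed by bilinear expansion and the observation $\det(\nabla_Y\Psi)=1$, is exactly the intended (and correct) way to fill that in; your cofactor-plus-substitution route to $\widetilde{g}^{\alpha\beta}$ checks out, including the sign flips on $C_\sharp$ and $C_{\sharp\sharp}$. For \eqref{EQ53} itself your method differs mildly from the paper's: the paper estimates the cross term by a weighted Young inequality, $2\mathfrak{b}'\xi_1\xi_2\leq\frac{(\mathfrak{b}')^2}{1/2+(\mathfrak{b}')^2}\xi_1^2+\bigl(\tfrac12+(\mathfrak{b}')^2\bigr)\xi_2^2$, which directly leaves $\frac{1}{1+2(\mathfrak{b}')^2}\xi_1^2+\tfrac12\xi_2^2$ and hence the constant $C_{\mathfrak{b}}$ in one line, whereas you complete the square to see positivity and then verify the specific constant via the discriminant condition $C^2-(2+(\mathfrak{b}')^2)C+1\geq 0$ in two regimes. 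Both yield exactly the stated $C_{\mathfrak{b}}=\min\{1/(1+2(\mathfrak{b}')^2),1/2\}$; your computations $1/4-(\mathfrak{b}')^2/2\ge 0$ and $(\mathfrak{b}')^2(2(\mathfrak{b}')^2-1)/(1+2(\mathfrak{b}')^2)^2\ge 0$ are right in their respective ranges (the only unstated point is the trivial positivity of the diagonal entries $1-C$ and $1+(\mathfrak{b}')^2-C$, needed for the discriminant criterion to imply nonnegativity). The paper's one-line estimate is slightly slicker; your discriminant argument has the advantage of making transparent where the constant is sharp, namely that equality in the determinant condition occurs at $(\mathfrak{b}')^2=1/2$.
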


\begin{proof}[Proof of Lemma \ref{lem51}]
Fix $P = { }^t (P_1 , P_2) \in \partial U$. Since $\partial U$ is $C^2$-class, there are a local coordinate $Z= { }^t (Z_1,Z_2)$, $\delta>0$, $\mathfrak{b} \in C^2 (\mathbb{R})$, and $\theta \in \mathbb{R}$ such that
\begin{equation*}
B_\delta (P) \cap U = \{ Z = { }^t (Z_1 , Z_2 ) \in \mathbb{R}^2; { \ } - \delta < Z_1 < \delta , \mathfrak{b} (Z_1) < Z_2 < \sqrt{ \delta - Z_1^2 } \}
\end{equation*}
and
\begin{equation*}
Z = 
\begin{pmatrix}
Z_1\\
Z_2
\end{pmatrix}
=
\begin{pmatrix}
\cos \theta & - \sin \theta\\
\sin \theta & \cos \theta
\end{pmatrix}
\begin{pmatrix}
X_1 - P_1\\
X_2 - P_2
\end{pmatrix}.
\end{equation*}
Set $Y = { }^t (Y_1 , Y_2)$ by $Y_1 = Z_1$ and $Y_2 = Z_2 - \mathfrak{b} (Z_1)$. Then we have
\begin{equation*}
Q_+^{\delta , \mathfrak{b}} = \{ Y = { }^t (Y_1 , Y_2 ) \in \mathbb{R}^2; { \ }  -\delta < Y_1 < \delta,{ \ }0 < Y_2 < \sqrt{ \delta^2 - Y_1^2} - \mathfrak{b} (Y_1) \}.
\end{equation*}
Now we set
\begin{multline*}
\Phi = \Phi (X) =
\begin{pmatrix}
\Phi_1\\
\Phi_2
\end{pmatrix}
\\=
\begin{pmatrix}
( X_1 - P_1 ) \cos \theta - ( X_2 - P_2 )\sin \theta\\
( X_1 - P_1 ) \sin \theta + ( X_2 - P_2 ) \cos \theta - \mathfrak{b} ( (X_1 - P_1) \cos \theta - (X_2 - P_2) \sin \theta )
\end{pmatrix}.
\end{multline*}
It is clear that $\Phi : B_\delta (P) \cap U \to Q_+^{\delta , \mathfrak{b}}$ and $\Phi \in C^2 ( B_\delta (P) \cap U)$. Set
\begin{equation*}
\Phi^{-1} = \Phi^{-1} (Y) =
\begin{pmatrix}
\Phi_1^{-1}\\
\Phi_2^{-1}
\end{pmatrix}
=
\begin{pmatrix}
\cos \theta & \sin \theta\\
- \sin \theta & \cos \theta
\end{pmatrix}
\begin{pmatrix}
Y_1 \\
Y_2 + \mathfrak{b} (Y_1) 
\end{pmatrix} + 
\begin{pmatrix}
P_1\\
P_2
\end{pmatrix}.
\end{equation*}
It is easy to check that $\Phi^{-1}$ is the inverse function of $\Phi$ and that
\begin{align*}
& \Phi^{-1} : Q_+^{\delta , \mathfrak{b}} \to B_\delta (P),\\
& \Phi^{-1} \in C^2 (Q_+^{\delta , \mathfrak{b}}).
\end{align*}
Direct calculations give \eqref{EQ51} and \eqref{EQ52}. Therefore, the lemma follows.
\end{proof}
\begin{proof}[Proof of Lemma \ref{lem52}]
We only derive \eqref{EQ53}. Fix $\xi = { }^t ( \xi_1 , \xi_2 ) \in \mathbb{R}^2$. Since
\begin{align*}
2 \mathfrak{b}' \xi_1 \xi_2 & = 2 \left( \frac{\mathfrak{b}'}{ \sqrt{1/2 + ( \mathfrak{b}')^2} } \xi_1 \right) \left( \sqrt{1/2 + ( \mathfrak{b}')^2  } \xi_2 \right)\\
 & \leq \frac{ ( \mathfrak{b}')^2}{1/2 + ( \mathfrak{b}')^2 } \xi_1^2 + \left\{ \frac{1}{2} + ( \mathfrak{b}')^2 \right\} \xi_2^2,
\end{align*}
we see that
\begin{equation}\label{EQ54}
- 2 \mathfrak{b}' \xi_1 \xi_2 \geq - \frac{ ( \mathfrak{b}')^2}{1/2 + ( \mathfrak{b}')^2 } \xi_1^2 - \left\{ \frac{1}{2} + ( \mathfrak{b}')^2 \right\} \xi_2^2.
\end{equation}
Using \eqref{EQ54}, we find that for each $\xi \in \mathbb{R}^2$
\begin{align*}
\mathring{g}^{\alpha \beta} \xi_\alpha \xi_\beta & = \xi_1^2 + \{ 1 + (\mathfrak{b}')^2 \} \xi_2^2 - 2 \mathfrak{b}' \xi_1 \xi_2\\
& \geq \frac{ 1/2 }{1/2 + ( \mathfrak{b}')^2 } \xi_1^2 + \frac{1}{2} \xi_2^2\\
& \geq \min \left\{ \frac{ 1/2 }{1/2 + ( \mathfrak{b}')^2 },  \frac{1}{2} \right\} ( \xi_1^2 + \xi_2^2 ).
\end{align*}
Therefore, we have \eqref{EQ53}.
\end{proof}

Let us now attack Proposition \ref{prop29}.
\begin{proof}[Proof of Proposition \ref{prop29}]
Let $F \in L^2 (\Gamma_0)$ and $w \in W_0^{1,2} (\Gamma_0)$. Assume that
\begin{equation}\label{EQ55}
\dual{ \nabla_\Gamma w , \nabla_\Gamma \phi } = \dual{ F , \phi }
\end{equation}
holds for all $\phi \in W_0^{1,2} ( \Gamma_0 )$. We first show the following lemma.
\begin{lemma}\label{Lem53}
There is $C = C (\Gamma_0) >0$ such that
\begin{align}
\| w \|_{W^{1,2} ( \Gamma_0 )} & \leq C \| F \|_{L^2 (\Gamma_0)},\label{EQ56}\\
\| \widehat{w} \|_{W^{1,2} ( U )} & \leq C \| F \|_{L^2 (\Gamma_0)}\label{EQ57}.
\end{align}
\end{lemma}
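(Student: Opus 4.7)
The plan is the standard energy estimate: test the weak formulation \eqref{EQ55} against $w$ itself, then close the estimate using the surface Poincar\'e inequality \eqref{eq331} available for $W_0^{1,2}(\Gamma_0)$-functions.

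Since $w \in W_0^{1,2}(\Gamma_0)$ is itself an admissible test function in \eqref{EQ55}, I would first substitute $\phi = w$ to obtain
\begin{equation*}
\| \nabla_\Gamma w \|_{L^2(\Gamma_0)}^2 = \dual{ \nabla_\Gamma w , \nabla_\Gamma w } = \dual{ F , w }.
\end{equation*}
Applying the surface H\"older inequality \eqref{eq39} on the right and then the surface Poincar\'e inequality \eqref{eq331} (which applies precisely because $w \in W_0^{1,2}(\Gamma_0)$), I get
\begin{equation*}
\| \nabla_\Gamma w \|_{L^2(\Gamma_0)}^2 \leq \| F \|_{L^2(\Gamma_0)} \| w \|_{L^2(\Gamma_0)} \leq C_\bigstar \| F \|_{L^2(\Gamma_0)} \| \nabla_\Gamma w \|_{L^2(\Gamma_0)},
\end{equation*}
so that $\| \nabla_\Gamma w \|_{L^2(\Gamma_0)} \leq C_\bigstar \| F \|_{L^2(\Gamma_0)}$. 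Another application of \eqref{eq331} yields $\| w \|_{L^2(\Gamma_0)} \leq C_\bigstar^2 \| F \|_{L^2(\Gamma_0)}$, and combining these two estimates via the definition \eqref{eq310} of the $W^{1,2}(\Gamma_0)$-norm produces \eqref{EQ56}.

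To obtain \eqref{EQ57}, I would transfer the bound from $\Gamma_0$ down to $U$ using the equivalence of norms stated in \eqref{eq320}: writing $w = \breve{\widehat{w}}$ with $\widehat{w} \in W_0^{1,2}(U)$, we have $\| \widehat{w} \|_{W^{1,2}(U)} \leq C \| w \|_{W^{1,2}(\Gamma_0)}$, so \eqref{EQ56} immediately implies \eqref{EQ57}.

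There is no real obstacle here; the only point to watch is that the $W_0^{1,2}$ hypothesis on $w$ is essential, since it is precisely what allows use of the Poincar\'e inequality \eqref{eq331} (rather than only the mean-zero version \eqref{eq330}) to control $\| w \|_{L^2(\Gamma_0)}$ by $\| \nabla_\Gamma w \|_{L^2(\Gamma_0)}$ without any subtraction of the average.
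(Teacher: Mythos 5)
Your proof is correct and follows essentially the same route as the paper: test \eqref{EQ55} with $\phi = w$, control $\dual{F,w}$ via H\"older and the Poincar\'e inequality \eqref{eq331}, and transfer to $U$ via \eqref{eq320}. The only cosmetic difference is that you cancel a factor of $\| \nabla_\Gamma w \|_{L^2(\Gamma_0)}$ directly (harmless, since the case $\nabla_\Gamma w = 0$ is trivial) whereas the paper absorbs it with Young's inequality.
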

\begin{proof}[Proof of Lemma \ref{Lem53}]
Since $w \in W_0^{1,2} (\Gamma_0)$, we apply \eqref{EQ55} to have
\begin{equation*}
\| \nabla_\Gamma w \|_{L^2 (\Gamma_0)}^2 = \dual{ F , w }.
\end{equation*}
Using the surface Poincar\'e inequality \eqref{eq331}, we obtain
\begin{equation*}
\| \nabla_\Gamma w \|_{L^2 (\Gamma_0)}^2 \leq C \| F \|_{L^2 (\Gamma_0)}^2 + \frac{1}{2} \| \nabla_\Gamma w \|_{L^2 (\Gamma_0 )}^2.
\end{equation*}
This gives
\begin{equation*}
\| \nabla_\Gamma w \|_{L^2 (\Gamma_0)}^2 \leq C \| F \|_{L^2 (\Gamma_0)}^2.
\end{equation*}
By the surface Poincar\'e inequality \eqref{eq331}, we have
\begin{equation*}
\| w \|_{W^{1,2} (\Gamma_0)} \leq C \| F \|_{L^2 (\Gamma_0)},
\end{equation*}
which is \eqref{EQ56}. From \eqref{eq320} and \eqref{EQ56}, we see \eqref{EQ57}. Therefore, the lemma follows.
\end{proof}
Now we return to the proof of Proposition \ref{prop29}. Write
\begin{equation*}
\mathcal{A} \varphi = - \frac{\partial}{\partial X_\alpha} \left( \sqrt{\mathcal{G}} \mathfrak{g}^{\alpha \beta} \frac{\partial \varphi }{\partial X_\beta} \right) .
\end{equation*}
Applying Lemmas \ref{Lem38} and \ref{Lem310}, and \eqref{EQ55}, we find that for all $\varphi \in C_0^2 (U)$
\begin{equation*}
\int_U (\mathcal{A} \widehat{w}) \varphi { \ } d X = \int_U \widehat{F} \varphi \sqrt{ \mathcal{G} } { \ }d X.
\end{equation*}
This implies that
\begin{equation}\label{EQ58}
\mathcal{A} \widehat{w} = \widehat{F} \sqrt{ \mathcal{G} } \text{ a.e. in }U.
\end{equation}

From Lemma \ref{lem51} we find that for each fixed $P \in \partial U$ there are $\delta = \delta (P) >0$, $\mathfrak{b} = \mathfrak{b}(P) \in C^2 (\mathbb{R})$, and $\Phi = \Phi (P) \in C^2 (B_\delta (P) \cap U )$ satisfying the properties as in Lemma \ref{lem51}. Since $\partial U \subset \bigcup_{P \in \partial U} B_{\delta(P)} (P)$ and $\partial U$ is a compact set, there are $\{ P^k \}_{k =1}^m \subset \partial U$ such that
\begin{equation*}
\partial U \subset \bigcup_{ k = 1}^m B_{\delta (P^k)} (P^k) .
\end{equation*}
Write $\Omega_k = B_{\delta (P^k)} (P^k)$. Take an open set $\Omega_0 \Subset U$ such that
\begin{equation*}
U \subset \bigcup_{ i =0}^m \Omega_i .
\end{equation*}
From the partition of unity, there are $\eta_i \in C_0^\infty ( \Omega_i )$ such that 
\begin{equation*}
\text{supp} { \ }\eta_i \subset \Omega_i \text{ and }\sum_{i=0}^m \eta_i = 1 \text{ in }U.
\end{equation*}
Set $\hat{w}_i = \eta_i \widehat{w}$. It is clear that $\widehat{w} = \sum_{i=0}^m \hat{w}_i$. By \eqref{EQ57}, we see that
\begin{equation}\label{EQ59}
\| \hat{w}_i \|_{W^{1,2}( \Omega_i \cap U )} \leq C \| F \|_{L^2 (\Gamma_0 )}.
\end{equation}
Applying \eqref{EQ58}, we find that
\begin{align*}
\mathcal{A} \hat{w}_i & = \mathcal{A} (\eta_i \widehat{w})\\
& = ( \mathcal{A} \eta_i ) \widehat{w} + \eta_i ( \mathcal{A} \widehat{w})\\
& =  ( \mathcal{A} \eta_i ) \widehat{w} + \eta_i ( \widehat{F} \sqrt{ \mathcal{G} }) := \hat{F}_i.
\end{align*}
Since $\eta_i \in C_0^\infty ( \Omega_i )$ and $\widehat{x} \in [C^2 (\overline{U})]^3$, we use \eqref{EQ57} to find that
\begin{equation}\label{EQ510}
\| \hat{F}_i \|_{L^2 (\Omega_i \cap U )} \leq C \| F \|_{L^2 (\Gamma_0)}.
\end{equation}
Now we consider
\begin{equation}\label{EQ511}
\mathcal{A} \hat{w}_i = \hat{F}_i \text{ a.e. in } \Omega_i \cap U.
\end{equation}
Fix $\varphi_i \in W_0^{1 , 2} ( \Omega_i \cap U)$. Using the integration by parts, we have
\begin{equation}\label{EQ512}
\int_{\Omega_i \cap U } \mathfrak{g}^{\alpha \beta} \frac{\partial \hat{w}_i}{\partial X_\alpha} \frac{\partial \varphi_i }{\partial X_\beta} \sqrt{\mathcal{G}} { \ }d X = \int_{\Omega_i \cap U} \hat{F}_i \varphi_i \sqrt{\mathcal{G}} { \ }d X. 
\end{equation}
Now we prove that for each $i \in \{0,1, \cdots, m \}$, $\hat{w}_i \in W^{2,2} ( \Omega_i \cap U )$ and  
\begin{equation*}
\| \hat{w}_i \|_{W^{2,2} (\Omega_i \cap U )} \leq C \| F \|_{L^2 (\Gamma_0)}.
\end{equation*}

We first consider $\hat{w}_1$. For simplicity we write $\delta (P_1)$ and $\mathfrak{b}(P_1)$ as $\delta$ and $\mathfrak{b}$. Let ${Q_+^{\delta, \mathfrak{b}}}$, $\mathring{g}_{\alpha \beta}$, $\mathring{g}^{\alpha \beta}$, $\widetilde{g}_{\alpha \beta}$, $\widetilde{g}^{\alpha \beta}$, $\Psi$, $\mathring{ \mathcal{G}}$, $\widetilde{ \mathcal{G}}$ be the symbols as in Lemmas \ref{lem51} and \ref{lem52}. Set
\begin{equation*}
\Gamma_1 = \{ x \in \mathbb{R}^3; { \ } x = \widehat{x} (X), X \in \Omega_1 \cap U \}.
\end{equation*}
From \eqref{EQ512}, we observe that for each $\varphi \in W_0^{1,2} ( \Omega_1 \cap U )$
\begin{equation}\label{EQ513}
\int_{\Omega_1 \cap U } \mathfrak{g}^{\alpha \beta} \frac{\partial \hat{w}_1}{\partial X_\alpha} \frac{\partial \varphi }{\partial X_\beta} \sqrt{\mathcal{G}} { \ }d X = \int_{\Omega_1 \cap U} \hat{F}_1 \varphi \sqrt{\mathcal{G}} { \ }d X. 
\end{equation}
Applying the change of variables, we find that
\begin{align*}\text{(L.H.S.) of }\eqref{EQ513} & = \int_{\Gamma_1} \nabla_\Gamma \breve{\hat{w}}_1 \cdot \nabla_\Gamma \breve{\varphi} { \ } d \mathcal{H}^2_x\\
& = \int_{Q_+^{\delta, \mathfrak{b}}} \widetilde{g}^{\alpha \beta} \frac{\partial \widetilde{w}_1}{\partial Y_\alpha} \frac{\partial \widetilde{\varphi}}{\partial Y_\beta} \sqrt{ \widetilde{ \mathcal{G} }} { \ } d Y
\end{align*}
and that
\begin{equation*}
\text{(R.H.S) of }\eqref{EQ513} = \int_{Q_+^{\delta, \mathfrak{b}}} \widetilde{F}_1 \widetilde{\varphi} \sqrt{ \widetilde{\mathcal{G}}} { \ } d Y.
\end{equation*}
Here $\breve{f}= f ( \breve{X} (x))$, $\widetilde{w}_1 = \widetilde{w}_1 (Y) := \hat{w} (\Psi (Y))$, $\widetilde{\varphi} = \widetilde{\varphi} (Y) := \varphi (\Psi (Y))$, and $\widetilde{F}_1 = \widetilde{F}_1 (Y) := \hat{F}_1 (\Psi (Y))$. Thus, we have
\begin{equation}\label{EQ514}
\int_{Q_+^{\delta, \mathfrak{b}}} \widetilde{g}^{\alpha \beta} \frac{\partial \widetilde{w}_1}{\partial Y_\alpha} \frac{\partial \widetilde{\varphi}}{\partial Y_\beta} \sqrt{ \widetilde{ \mathcal{G} }} { \ } d Y = \int_{Q_+^{\delta, \mathfrak{b}}} \widetilde{F}_1 \widetilde{\varphi} \sqrt{ \widetilde{\mathcal{G}}} { \ } d Y.
\end{equation}

Now we prove that there is $C_1 >0$ such that for all $\widetilde{\varphi} \in W^{1,2} (Q_+^{\delta, \mathfrak{b}} )$
\begin{equation}\label{EQ515}
\int_{Q_+^{\delta, \mathfrak{b}}} \widetilde{g}^{\alpha \beta} \frac{\partial \widetilde{\varphi}}{\partial Y_\alpha} \frac{\partial \widetilde{\varphi}}{\partial Y_\beta} \sqrt{ \widetilde{ \mathcal{G} }} { \ } d Y \geq C_1 \int_{Q_+^{\delta, \mathfrak{b}}} | \nabla_Y \widetilde{\varphi} |^2 { \ } d Y.
\end{equation}
Using the change of variables and Lemmas \ref{Lem31}, \ref{lem51}, and \ref{lem52}
, we check that
\begin{align}\label{EQ516}
\int_{Q_+^{\delta, \mathfrak{b}}} \widetilde{g}^{\alpha \beta} \frac{\partial \widetilde{ \varphi }}{\partial Y_\alpha} \frac{\partial \widetilde{\varphi}}{\partial Y_\beta} \sqrt{ \widetilde{ \mathcal{G} }} { \ } d Y & = \int_{\Gamma_1} | \nabla_\Gamma \breve{\varphi} |^2 { \ }  d \mathcal{H}^2_x\\
& = \int_{\Omega_1 \cap U } \mathfrak{g}^{\alpha \beta} \frac{\partial \varphi}{\partial X_\alpha} \frac{\partial \varphi }{\partial X_\alpha} \sqrt{\mathcal{G}} { \ }d X.\notag
\end{align}
By Definition \ref{def21} and Assumption \ref{ass24}, we see that
\begin{equation}\label{EQ517}
\int_{\Omega_1 \cap U } \mathfrak{g}^{\alpha \beta} \frac{\partial \varphi}{\partial X_\alpha} \frac{\partial \varphi }{\partial X_\alpha} \sqrt{\mathcal{G}} { \ }d X \geq \lambda_0 \lambda_{min} \int_{\Omega_1 \cap U} |\nabla_X \varphi |^2 { \ } d X.
\end{equation}
Using the change of variables, Lemmas \ref{lem51}, \ref{lem52}, and \eqref{EQ53}, we find that
\begin{align}\label{EQ518}
\int_{\Omega_1 \cap U} |\nabla_X \varphi |^2 { \ } d X & = \int_{Q_+^{\delta, \mathfrak{b}}} \mathring{g}^{\alpha \beta} \frac{\partial \widetilde{\varphi} }{\partial Y_\alpha} \frac{\partial \widetilde{\varphi} }{\partial Y_\beta} { \ } d Y\\
& \geq C_{\mathfrak{b}} \int_{Q_+^{\delta, \mathfrak{b}}} | \nabla_Y \widetilde{\varphi} |^2 { \ } d Y.\notag
\end{align}
Combining \eqref{EQ516}-\eqref{EQ518}, we have \eqref{EQ515}.

Next we introduce the difference quotient $D_h$. For $0 < h <<1$,
\begin{equation*}
D_h f := \frac{f (Y + h e_1) - f (Y)}{h }.
\end{equation*}
Here $e_1 = { }^t (1,0)$. See \cite[\S 5.8]{Eva10} for some properties of $D_h$. Since $\hat{w}_1 = \eta_1 \widehat{w}$, $\widehat{w} \in W_0^{1,2} (U)$, and supp $\eta_1 \Subset \Omega_1 $, we see that $\hat{w}_1 \in W_0^{1,2} ( \Omega_1 \cap U)$. From the argument in the proof of Lemma \ref{lem51} and supp $\eta_1 \Subset \Omega_1 $, we check that $\widetilde{w}_1 \in W^{1,2}_0 ( {Q_+^{\delta, \mathfrak{b}}})$ and that $D_h \widetilde{w}_1,D_{-h} \widetilde{w}_1 ,D_{-h} D_h \widetilde{w}_1 \in W^{1,2}_0 ( {Q_+^{\delta, \mathfrak{b}}})$. Using \eqref{EQ514}, we have
\begin{equation}\label{EQ519}
\int_{Q_+^{\delta, \mathfrak{b}}} \widetilde{g}^{\alpha \beta} \frac{\partial \widetilde{w}_1 }{\partial Y_\alpha} \left( \frac{\partial  }{\partial Y_\beta} D_{-h} D_h \widetilde{w}_1 \right) \sqrt{\widetilde{\mathcal{G}}} { \ } d Y = \int_{Q_+^{\delta, \mathfrak{b}}} \widetilde{F}_1 ( D_{-h} D_h \widetilde{w}_1) \sqrt{\widetilde{\mathcal{G}}} { \ } d Y.
\end{equation}
By the definition of $D_h$, we check that
\begin{equation*}
\text{(L.H.S) of }\eqref{EQ519} = \int_{Q_+^{\delta, \mathfrak{b}}} D_h \left( \sqrt{\widetilde{\mathcal{G}}} \widetilde{g}^{\alpha \beta} \frac{\partial \widetilde{w}_1 }{\partial Y_\alpha} \right) \left( \frac{\partial  }{\partial Y_\beta} D_h \widetilde{w}_1 \right) { \ } d Y = \mathcal{K}_1 + \mathcal{K}_2.
\end{equation*}
Here
\begin{align*}
\mathcal{K}_1 &:= \int_{Q_+^{\delta, \mathfrak{b}}} D_h \left( \sqrt{\widetilde{\mathcal{G}}} \widetilde{g}^{\alpha \beta} \right) \frac{\partial \widetilde{w}_1 }{\partial Y_\alpha} [Y_1 + h,Y_2] \left( \frac{\partial  }{\partial Y_\beta} D_h \widetilde{w}_1 \right) { \ } d Y,\\
\mathcal{K}_2 & := \int_{Q_+^{\delta, \mathfrak{b}}} \sqrt{\widetilde{\mathcal{G}}} \widetilde{g}^{\alpha \beta} \left( \frac{\partial  }{\partial Y_\alpha} D_h \widetilde{w}_1 \right) \left( \frac{\partial  }{\partial Y_\beta} D_h \widetilde{w}_1 \right) { \ } d Y. 
\end{align*}
As a result, we have
\begin{equation}\label{EQ520}
\mathcal{K}_2 = - \mathcal{K}_1 + \int_{Q_+^{\delta, \mathfrak{b}}} \widetilde{F}_1 ( D_{-h} D_h \widetilde{w}_1) \sqrt{\widetilde{\mathcal{G}}} { \ } d Y.
\end{equation}
By \eqref{EQ515}, we find that
\begin{equation}\label{EQ521}
\mathcal{K}_2 \geq C_1 \int_{Q_+^{\delta, \mathfrak{b}}} | \nabla_Y D_h \widetilde{w}_1 |^2 { \ } d Y. 
\end{equation}
Applying the Cauchy inequality, we see that
\begin{equation*}
- \mathcal{K}_1 \leq C \int_{Q_+^{\delta, \mathfrak{b}}}  | \nabla_Y \widetilde{w}_1 |^2 { \ } d Y + \frac{C_1}{4} \int_{Q_+^{\delta, \mathfrak{b}}} | \nabla_Y D_h \widetilde{w}_1 |^2 { \ } d Y.
\end{equation*}
Since supp $D_{-h} D_h \widetilde{w}_1 \Subset {Q_+^{\delta, \mathfrak{b}}}$ for each fixed $Y_2$, we use the Cauchy inequality to check that
\begin{equation*}
\int_{Q_+^{\delta, \mathfrak{b}}} \widetilde{F}_1 ( D_{-h} D_h \widetilde{w}_1) \sqrt{\widetilde{\mathcal{G}}} { \ } d Y \leq C \int_{Q_+^{\delta, \mathfrak{b}}} | \widetilde{F}_1 |^2 { \ } d Y + \frac{C_1}{4} \int_{Q_+^{\delta, \mathfrak{b}}} | D_h \nabla_Y \widetilde{w}_1 |^2 { \ } d Y.
\end{equation*}
Here we used the fact that
\begin{align*}
\int_{Q_+^{\delta, \mathfrak{b}}} | D_{-h} D_h \widetilde{w}_1 |^2 { \ } d Y & \leq C \int_{Q_+^{\delta, \mathfrak{b}}} | \partial_{Y_1} D_h \widetilde{w}_1 |^2 { \ } d Y\\
& \leq C \int_{Q_+^{\delta, \mathfrak{b}}} | \nabla_Y D_h \widetilde{w}_1 |^2 { \ } d Y.
\end{align*}
Thus, we have
\begin{equation*}
\text{(R.H.S) of }\eqref{EQ520} \leq C \int_{Q_+^{\delta, \mathfrak{b}}} \{ | \widetilde{F}_1 |^2 + | \nabla_Y \widetilde{w}_1 |^2 \} { \ } d Y + \frac{C_1}{2} \int_{Q_+^{\delta, \mathfrak{b}}} | \nabla_Y D_h \widetilde{w}_1 |^2 { \ } d Y. 
\end{equation*}
From \eqref{EQ521}, we obtain
\begin{equation*}
\int_{Q_+^{\delta, \mathfrak{b}}} | \nabla_Y D_h \widetilde{w}_1 |^2 { \ } d Y \leq C \int_{Q_+^{\delta, \mathfrak{b}}} \{ | \widetilde{F}_1 |^2 + | \nabla_Y \widetilde{w}_1 |^2 \} { \ } d Y. 
\end{equation*}
Using \eqref{EQ59}, \eqref{EQ510}, and $\nabla_Y D_h \widetilde{w}_1 = D_h \nabla_Y \widetilde{w}_1$, we see that
\begin{equation*}
\int_{Q_+^{\delta, \mathfrak{b}}} | D_h \nabla_Y \widetilde{w}_1 |^2 { \ } d Y \leq C \| F \|_{L^2 (\Gamma_0)}. 
\end{equation*}
From the nice property of $D_h$ (\cite[\S 5.8]{Eva10}), we find that
\begin{equation*}
\int_{Q_+^{\delta, \mathfrak{b}}} | \partial_{Y_1} \nabla_Y \widetilde{w}_1 |^2 { \ } d Y \leq C \| F \|_{L^2 (\Gamma_0)}. 
\end{equation*}
This shows that $\partial_{Y_1}^2 \widetilde{w}_1, \partial_{Y_1}\partial_{Y_2} \widetilde{w}_{1} $, $\partial_{Y_2} \partial_{Y_1} \widetilde{w}_{1} $ are in $L^2 (Q_+^{\delta , \mathfrak{b}})$. From \eqref{EQ514}, we see that
\begin{equation*}
- \frac{\partial}{\partial Y_\alpha} \left( \sqrt{\mathcal{\widetilde{\mathcal{G}}}} \widetilde{g}^{\alpha \beta} \frac{\partial \widetilde{w}_1 }{\partial Y_\beta} \right) = \widetilde{F}_1 \sqrt{\widetilde{\mathcal{G}}} \text{ a.e. in } Q_+^{\delta, \mathfrak{b}} .
\end{equation*}
Since 
\begin{multline*}
 (\partial_{Y_2})^2 \widetilde{w}_1 = \frac{1}{\sqrt{ \widetilde{\mathcal{G}}} \widetilde{g}^{22} } \bigg( - \widetilde{F}_1 \sqrt{\widetilde{\mathcal{G}}} - \frac{\partial}{\partial X_1} (\sqrt{\widetilde{\mathcal{G}}} \widetilde{g}^{1\alpha} \partial_{Y_\alpha} \widetilde{w}_1  ) \\
- \frac{\partial}{\partial X_2} (\sqrt{\widetilde{\mathcal{G}}} \widetilde{g}^{12} \partial_{Y_1} \widetilde{w}_1  ) - \frac{\partial}{\partial X_2} (\sqrt{\widetilde{\mathcal{G}}} \widetilde{g}^{22} )\partial_{Y_2} \widetilde{w}_1 \bigg),
\end{multline*}
we find that $(\partial_{Y_2})^2 \widetilde{w}_1 \in L^2 (Q_+^{\delta, \mathfrak{b}})$ and that
\begin{equation*}
\| (\partial_{Y_2})^2 \widetilde{w}_1 \|_{L^2 ( Q_+^{\delta , \mathfrak{b}})} \leq C \| F \|_{L^2 (\Gamma_0)}.
\end{equation*}
Therefore, we see that $\widetilde{w}_1 \in W^{2,2} ( Q_+^{\delta, \mathfrak{b}})$ and that
\begin{equation*}
\| \widetilde{w}_1 \|_{W^{2,2} ( Q_+^{\delta , \mathfrak{b}})} \leq C \| F \|_{L^2 (\Gamma_0)}.
\end{equation*}
Using the change of variables, we conclude that $\hat{w}_1 \in W^{2,2}( \Omega_1 \cap U )$ and that
\begin{equation}\label{EQ522}
\| \hat{w}_1 \|_{W^{2,2} ( \Omega_1 \cap U )} \leq C \| F \|_{L^2 (\Gamma_0)}.
\end{equation}
Similarly, we see that for each $i=2, \cdots, m$, $\hat{w}_i \in W^{2,2}( \Omega_i \cap U )$ and that
\begin{equation}\label{EQ523}
\| \hat{w}_i \|_{W^{2,2} ( \Omega_i \cap U )} \leq C \| F \|_{L^2 (\Gamma_0)}.
\end{equation}

Next we consider $\hat{w}_0$. Using \eqref{EQ511}, we see that for all $\varphi \in W_0^{1,2} (\Omega_0)$
\begin{equation}\label{EQ524}
\int_{\Omega_0 } \mathfrak{g}^{\alpha \beta} \frac{\partial \hat{w}_0}{\partial X_\alpha} \frac{\partial \varphi}{\partial X_\beta} \sqrt{ \mathcal{G}} { \ }d X = \int_{\Omega_0} \hat{F}_0 \varphi \sqrt{\mathcal{G}} { \ } d X.
\end{equation}
By Definition \ref{def21} and Assumption \ref{ass24}, we see that for all $\varphi \in W_0^{1,2} (\Omega_0)$
\begin{equation}\label{EQ525}
\int_{U} \mathfrak{g}^{\alpha \beta} \frac{\partial \varphi}{\partial X_\alpha} \frac{\partial \varphi}{\partial X_\beta} \sqrt{ \mathcal{G}} { \ }d X \geq C \int_{\Omega_0} | \nabla_X \varphi |^2 { \ }d X.
\end{equation}
Since supp $\hat{w}_0 \Subset \Omega_0$, we see that $D_h \hat{w}_0 , D_{-h} \hat{w}_0, D_{-h}D_h \hat{w}_0 \in W^{1,2}_0 ( \Omega_0)$ if $h$ is sufficently small. From \eqref{EQ524}, we have
\begin{equation*}
\int_{\Omega_0 } \mathfrak{g}^{\alpha \beta} \frac{\partial \hat{w}_0}{\partial X_\alpha} \left( \frac{\partial}{\partial X_\beta} D_{-h} D_h \hat{w}_0 \right) \sqrt{ \mathcal{G}} { \ }d X = \int_{\Omega_0} \hat{F}_0 (D_{-h}D_h \hat{w}_0) \sqrt{\mathcal{G}} { \ } d X.
\end{equation*}
By the previous argument with \eqref{EQ525}, we see that $\hat{w}_0 \in W^{2,2}( \Omega_0 )$ and that
\begin{equation}\label{EQ526}
\| \hat{w}_0 \|_{W^{2,2} ( \Omega_0 )} \leq C \| F \|_{L^2 (\Gamma_0)}.
\end{equation}

Using \eqref{EQ522}, \eqref{EQ523}, and \eqref{EQ526}, we check that
\begin{equation*}
\| \widehat{w} \|_{W^{2,2} (U)} \leq \sum_{i=0}^m \| \hat{w}_i \|_{W^{2,2} (U)} \leq C \| F \|_{L^2 (\Gamma_0 )}.
\end{equation*}
This implies that $\widehat{w} \in W^{2,2} (U)$. We also see that $w \in W^{2,2} (\Gamma_0)$ and
\begin{align*}
\| w \|_{W^{2,2} (\Gamma ) } \leq C \| \widehat{w} \|_{W^{2,2} (U)} \leq C \| F \|_{L^2 (\Gamma_0 )}.
\end{align*}
Therefore, Proposition \ref{prop29} is proved.
\end{proof}

\section{Existence of strong solutions}\label{sect6}
We prove Theorems \ref{thm25}-\ref{thm27} to show the existence of strong solutions to systems \eqref{eq11} and \eqref{eq12}. Let us first attack Theorem \ref{thm25}
\begin{proof}[Proof of Theorem \ref{thm25}]
Let $F \in L_0^2 ( \Gamma_0 )$. From Propositions \ref{prop28}, there exists $v \in H^1 (\Gamma_0)$ such that for all $\psi \in W^{1,2} ( \Gamma_0)$
\begin{equation}\label{EQ61}
\dual{\nabla_\Gamma v , \nabla_\Gamma \psi } = \dual{ F , \psi }.
\end{equation}
Using Lemmas \ref{Lem38} and \ref{Lem310}, we see that for all $\varphi \in C_0^2 (U)$
\begin{align*}
\int_U \widehat{F} \varphi \sqrt{\mathcal{G}} { \ }d X & = \int_U \mathfrak{g}^{\alpha \beta} \frac{\partial \widehat{v}}{\partial X_\alpha} \frac{\partial \varphi}{\partial X_\beta} \sqrt{ \mathcal{G}} { \ } d X\\
& = - \int_U \left\{ \frac{1}{\sqrt{\mathcal{G}}} \frac{\partial}{\partial X_\alpha} \left( \sqrt{\mathcal{G}} \mathfrak{g}^{\alpha \beta} \frac{\partial \widehat{v} }{\partial X_\beta} \right) \right\} \sqrt{\mathcal{G}} \varphi { \ } d X.
\end{align*}
This implies that
\begin{equation*}
- \frac{\partial}{\partial X_\alpha} \left( \sqrt{\mathcal{G}} \mathfrak{g}^{\alpha \beta} \frac{\partial \widehat{v} }{\partial X_\beta} \right)  = \widehat{F}\sqrt{ \mathcal{G}} \text{ a.e. in } U.
\end{equation*}
Thus, we find that
\begin{equation*}
- \Delta_\Gamma v = F \text{ a.e. on }\Gamma_0.
\end{equation*}
Since $v \in W^{1,2} (\Gamma_0)$, it follows from Lemma \ref{Lem311} to see that there exists $u \in W^{2,2} ( \Gamma_0 )$ such that
\begin{equation*}
\gamma_2 u = \gamma_2 v.
\end{equation*}
Here $\gamma_2 : W^{1,2} (\Gamma_0) \to L^2 ( \partial \Gamma_0)$ is the trace operator. Set $w = v - u $ and $F_* = F - \Delta_\Gamma u$. It is easy to check that $w \in W^{1,2}_0 (\Gamma )$ and $F_* \in L^2 ( \Gamma_0 )$, and that $w$ satisfies
\begin{equation*}
\begin{cases}
- \Delta_\Gamma w =  F_* \text{ a.e. on }\Gamma_0,\\
\gamma_2 w =0.
\end{cases}
\end{equation*}
Using the integration by parts, we see that
\begin{equation*}
\dual{ \nabla_\Gamma w , \nabla_\Gamma \phi } = \dual{F_* , \phi }
\end{equation*}
holds for all $\phi \in W_0^{1,2} (\Gamma_0)$. Since $w \in W_0^{1,2} (\Gamma_0)$, it follows from Proposition \ref{prop29} to see that $w \in W^{2,2} ( \Gamma_0)$. From $u \in W^{2,2} ( \Gamma_0 )$, we find that $v \in W^{2,2} ( \Gamma_0 )$.

Applying Lemmas \ref{Lem38} and \ref{Lem310}, and \eqref{EQ61}, we observe that for all $\psi \in W^{1,2} (\Gamma_0)$
\begin{align*}
\dual{F , \psi} = \dual{- \Delta_\Gamma v , \psi } &= \dual{\nabla_\Gamma v , \nabla_\Gamma \psi }\\
 &= \dual{ F , \psi } + \int_{\partial \Gamma_0} \left( \gamma_2 \left[ \frac{\partial v}{\partial \nu} \right] \right) ( \gamma_2 \psi ) { \ }d \mathcal{H}^1_x.
\end{align*}
Since
\begin{equation*}
\int_{\partial \Gamma_0} \left( \gamma_2 \left[ \frac{\partial v}{\partial \nu} \right] \right) ( \gamma_2 \psi ) { \ }d \mathcal{H}^1_x = 0
\end{equation*}
for all $\psi \in W^{1,2} (\Gamma_0)$, we check that
\begin{equation*}
\left\| \gamma_2 \left[ \frac{\partial v}{\partial \nu} \right] \right\|_{L^2 (\partial \Gamma_0 )} = 0.
\end{equation*}
Therefore, we conclude that $v$ is a strong $L^2$-solution to system \eqref{eq11}. From Proposition \ref{prop28} we see the uniqueness of the strong $L^2$-solutions to \eqref{eq11}. Therefore, Theorem \ref{thm25} is proved.
\end{proof}

Next we show the existence of a strong $L^p$-solution to system \eqref{eq11}.
\begin{proof}[Proof of Theorem \ref{thm26}]
Fix $2 < p < \infty$ and $F \in L_0^p ( \Gamma_0)$. Since $L_0^p (\Gamma_0) \subset L_0^2 ( \Gamma_0 )$, it follows from Theorem \ref{thm25} to see that there exists a unique function $v \in H^1 (\Gamma_0) \cap W^{2,2} (\Gamma_0) $ such that
\begin{align}
\| \Delta_\Gamma v + F \|_{L^2 (\Gamma_0)} & = 0,\notag\\
\left\| \gamma_2 \left[ \frac{\partial v}{\partial \nu} \right] \right\|_{L^2 (\partial \Gamma_0)} & = 0.\label{EQ62}
\end{align}
From the arguments in the proofs of Proposition \ref{prop29} and Theorem \ref{thm25} we see that $\widehat{v}$ satisfies
\begin{equation*}
- \frac{\partial }{\partial X_\alpha} \left( \sqrt{\mathcal{G}} \mathfrak{g}^{\alpha \beta} \frac{\partial \widehat{v}}{\partial X_\beta} \right) = \widehat{F} \sqrt{\mathcal{G}} \text{ in }L^2(U).
\end{equation*}
Write
\begin{equation*}
\mathcal{A} \varphi := \frac{\partial }{\partial X_\alpha} \left( \sqrt{\mathcal{G}} \mathfrak{g}^{\alpha \beta} \frac{\partial \varphi}{\partial X_\beta} \right) .
\end{equation*}
Since $\widehat{v} \in W^{2,2} (U)$, it follows from the Sobolev embedding theorem to find that $\widehat{v} \in W^{1,p} (U)$. From $F \in L^p (\Gamma_0)$, we see that $\widehat{F} \sqrt{\mathcal{G}} \in L^p(U)$. By the argument in the proof of Lemma \ref{Lem311}, we find that there exists $V \in W^{2,p} (U)$ such that $\widehat{\gamma}_p V = \widehat{\gamma}_p \widehat{v}$. Here $\widehat{\gamma}_p: W^{1,p}(U) \to L^p(\partial U)$ is the trace operator. Set $\widehat{w} = \widehat{v} - V$. Then $\widehat{w}$ satisfies $\widehat{w} \in W_0^{1,p} (U)$ and
\begin{equation*}
- \mathcal{A} \widehat{w} = \widehat{F} \sqrt{ \mathcal{G}} + \mathcal{A} V \text{ in } L^2(U). 
\end{equation*}
This gives
\begin{equation*}
\mathfrak{g}^{\alpha \beta} \frac{\partial^2 \widehat{w} }{\partial X_\alpha \partial X_\beta} = \frac{1}{\sqrt{ \mathcal{G}}} \left\{ \widehat{F} \sqrt{ \mathcal{G}} - \mathcal{A} V + \frac{\partial \widehat{w}}{\partial X_\alpha} \left( \frac{\partial}{\partial X_\alpha}\sqrt{\mathcal{G}} \mathfrak{g}^{\alpha \beta}  \right) \right\} := F_\star.
\end{equation*}
It is easy to check that $F_\star \in L^p (U)$. Now we set
\begin{equation*}
\mathcal{L} \varphi := \mathfrak{g}^{\alpha \beta} \frac{\partial^2 \varphi}{\partial X_\alpha \partial X_\beta}.
\end{equation*}
By Assumption \ref{ass24}, we see that the operator $\mathcal{L}$ is strictly elliptic. Since $\widehat{w} \in W_0^{1,p}(U)$, $F_\star \in L^p (U)$, and $\mathcal{L}$ is strictly elliptic, it follows from \cite[Theorem 9.15]{GT98} to see that $\widehat{w} \in W^{2,p}(U)$. From $V \in W^{2,p} (U)$, we find that $\widehat{v} \in W^{2,p}(U)$. Therefore, we see that $v \in W^{2,p} (\Gamma_0)$. Since $W^{2,p}(U) \subset C^{1,1- 2/p}(\overline{U})$ from the Sobolev embedding theorem, we also see that $v \in C^{1, 1- 2/p} (\overline{\Gamma_0})$. 

Now we assume that $\partial U$ is $C^3$-class, $\widehat{x} \in [ C^3 ( \overline{U}) ]^3$, and that $F \in W^{1,p} (\Gamma_0)$. From \cite[Theorem 9.19]{GT98} and Sobolev embedding theorem, we find that $\widehat{w} \in W^{3,p}(U) \subset C^{2, 1- 2/p} ( \overline{U})$. Therefore we see that $v \in C^{2, 1 - 2/p } (\overline{\Gamma_0})$. Therefore, Theorem \ref{thm26} is proved.
\end{proof}

Finally, we prove Theorem \ref{thm27}.
\begin{proof}[Proof of Theorem \ref{thm27}]
We only show the assertion $(\mathrm{i})$ since $(\mathrm{ii})$ is similar. Let $2 \leq p < \infty$, and let $F \in L_0^p ( \Gamma_0)$ and $\chi \in W^{1,p} (\Gamma_0)$. To solve system \eqref{eq12}, we consider
\begin{equation}\label{EQ63}
\begin{cases}
- \Delta_\Gamma v = F + \chi H_\Gamma \text{ on } \Gamma_0,\\
\displaystyle{\frac{\partial v}{\partial \nu} = 0} \text{ on } \partial \Gamma_0.
\end{cases}
\end{equation}
From ${\rm{div}}_\Gamma ( \chi n ) = - \chi H_\Gamma$ on $\Gamma_0$ and $\chi n \cdot \nu = 0$ on $\partial \Gamma_0$, we apply the surface divergence theorem to see that
\begin{equation*}
\int_{\Gamma_0} \chi H_\Gamma { \ }d \mathcal{H}^2_x = 0.
\end{equation*}
This implies that $F + \chi H_\Gamma \in L^p_0 ( \Gamma_0 )$. From Theorem \ref{thm26}, there exists a solution $v \in W^{2,p} ( \Gamma_0)$ to system \eqref{EQ63}. Set $V = - \nabla_\Gamma v + \chi n$. It is easy to check that $V \in [W^{1,p}(\Gamma_0) ]^3$, ${\rm{div}}_\Gamma V = - \Delta_\Gamma v - \chi H_\Gamma n = F$, $V \cdot n = \chi$, and $ V \cdot \nu |_{\partial \Gamma_0} = - {\partial v}/{\partial \nu}|_{\partial \Gamma_0} = 0$. Therefore, Theorem \ref{thm27} is proved.
\end{proof}

\end{document}